\def\section{\@startsection{section}{1}%
	\z@{.7\linespacing\@plus\linespacing}{.5\linespacing}%
	{\bfseries\normalfont\scshape
		\centering
}}
\def\@secnumfont{\bfseries}
\numberwithin{equation}{section}
\newtheorem{theorem}{Theorem}[section]
\newtheorem{corollary}{Corollary}
\newtheorem{lemma}[theorem]{Lemma}
\newtheorem{proposition}{Proposition}
\newtheorem{assumption}{Assumption}
\theoremstyle{definition}
\newtheorem{definition}[theorem]{Definition}
\newtheorem{remark}{Remark}
\def\ed{\end{document}}
\begin{document}
\title[Optimal Control of the 3D Damped Navier-Stokes-Voigt Equations  ]
	{Optimal Control of the 3D Damped Navier-Stokes-Voigt \\\vspace{.03in} Equations  with Control Constraints }
	\author[Sakthivel Kumarasamy]{ Sakthivel Kumarasamy}
	\address{Faculty of Mathematics \\
			Indian Institute of Space Science and Technology (IIST) \\
			Trivandrum- 695 547, INDIA}
	\email{sakthivel@iist.ac.in, pktsakthi@gmail.com}
	\curraddr{}
	\maketitle
\begin{abstract}
	In this paper, we consider the 3D  Navier-Stokes-Voigt (NSV) equations with nonlinear damping $|u|^{r-1}u, r\in[1,\infty)$ in bounded and space-periodic domains. We formulate an optimal control problem of minimizing the curl of the velocity field in the energy norm subject to the flow velocity satisfying the damped NSV equation with a distributed control force. The control also needs to obey box-type constraints. For any $r\geq 1,$	the existence and uniqueness of a weak solution is discussed when the domain $\Omega$ is periodic/bounded in $\mathbb R^3$ while a unique strong solution is obtained in the case of space-periodic boundary conditions.  We prove the existence of an optimal pair for the control problem. Using the classical adjoint problem approach, we show that the optimal control satisfies a  first-order necessary optimality condition given by a variational inequality. Since the optimal control problem is non-convex, we obtain a  second-order  sufficient optimality condition showing that an admissible control is locally optimal. Further, we derive optimality conditions  in terms of  adjoint state defined with respect to the growth of the damping term for a global optimal control.
\end{abstract}


\section{Introduction}
Optimal control of fluid mechanics has been one of the crucial topics in applied mathematics. One such problem in this topic is the minimization of turbulence in the flow field by acting upon the region by an external force through the interior of the flow field or the boundary of the flow domain. In this work, we study the optimal control problem of  Navier-Stokes-Voigt equations with nonlinear damping  and distributed control on the right-hand side of the state equations, describing the motion of homogeneous incompressible  fluids, given by
{\small\begin{eqnarray}\label{1.1}\textrm{(NSVD)}\left\{\begin{array}{rlll}
			u_t-\mu \Delta u_t -\nu \Delta u
			+(u\cdot\nabla)u+\nabla p+\alpha u+\beta|u|^{r-1}u &=& U  \ \mbox{in} \ \Omega_T,\\ [1mm]
			\nabla\cdot u=0  \ \ \mbox{in} \ \ \Omega_T, \ \ \ u(x,0)&=&u_0 \  \mbox{in} \  \Omega,
		\end{array}\right.
\end{eqnarray}}where $ \Omega_T:=\Omega\times (0,T],$ $\Omega$ is a bounded domain in $\mathbb{R}^3$ with smooth boundary $\partial\Omega$ or a periodic domain as in Section \ref{spd},  and  $T>0$ is a fixed given time. In the case of the bounded domain, \eqref{1.1} is supplemented with the Dirichlet boundary conditions $u=0$ on $\Sigma_T:=\partial\Omega \times [0,T].$ The unknowns $u=(u_1(x,t), u_2(x,t),u_3(x,t))$ denote the velocity of the flow field, $p=p(x,t)$ is a scalar valued function representing the pressure field and $u_0$ is the given initial flow velocity. The function $U=(U_1(x,t), U_2(x,t),U_3(x,t))$ is the applied distributed control force that can be realized, for instance, as the electromagnetic (Lorentz) force distribution in salt water and liquid metals(see, \cite{We},\cite{Fa}).  The parameters, $\mu>0$ denotes the length scale characterizing the elasticity of the fluid,  $\nu>0$ is the kinematic viscosity, the damping coefficients $\alpha,\beta>0,$ and $r\in [1,\infty),$ the order of the nonlinearity. We also assume $\int_\Omega p(t,x)dx =0  \ \mbox{in} \ (0,T)$ for uniqueness of the pressure.

Let us look at some of the special cases of \eqref{1.1} when the external force $U$ (mostly) equals zero. Evidently, when the coefficients $\mu=0$ and $\alpha=\beta=0,$ the model problem \eqref{1.1} reduces to the classical incompressible Navier-Stokes (NS) equations. The existence, uniqueness, and other qualitative properties of the Navier-Stokes equations have been studied to a great extent by several mathematicians(see, \cite{Fl,Te1}).   Though plenty of articles are available for the 3D NS equations,  the uniqueness of weak solutions and the global existence of strong solutions for this problem remains an open problem that attracts many researchers to look into this case deeply. There are various generalizations of the classical NS equations have been proposed in the literature that leads to the global well-posedness and decay of the solutions. More precisely,  when the parameters $\alpha=\beta=0,$ the equation \eqref{1.1} reduces to the standard NSV equation, which was introduced in \cite{Os} as a model for the approximation of the Kelvin-Voigt linear viscoelastic incompressible fluid flow. The NSV equation was suggested in \cite{Ca} as a regularized model for the classical Navier-Stokes equations for obtaining the numerical simulations.
Another important model considered in the literature is the convective  Brinkman-Forchheimer (CBF) equation (\cite{Ha}), which can be obtained by setting the length scale parameter $\mu=0.$   In this special case,  the parameter $\nu$ is referred to as the   Brinkman coefficient (effective viscosity),  $\alpha>0$ denotes the Darcy (permeability of porous medium) coefficient, and $\beta>0$  is the Forchheimer coefficient (proportional to the porosity of the material). In the absence of $\alpha u,$ the CBF equation has the same scaling as the classical NS equation, which is referred to as the NS equation with damping (or absorption) term $\beta|u|^{r-1}u.$  The damping term can be physically motivated as a resistance to the motion of the flow, that is, an external force field in momentum equation accounting for a friction process arising inside a flow domain  (see, \cite{Ant,Cai} and references therein). In the paper, \cite{Ant}, the existence of Leray-Hopf weak solution has been obtained for any dimension $n\geq 2,$ while the uniqueness is obtained when $n=2.$  The existence and uniqueness of weak solutions of the Kelvin-Voigt-Brinkman-Forchheimer (KVBF) equations (see, \cite{An}), which is similar to  the model \eqref{1.1}  but with a general damping term ($f(u)$)  has been studied in a bounded/unbounded domain in $\mathbb R^3.$   For  the same KVBF equation, the existence and uniqueness of strong solutions, when $1\leq r\leq 5,$ and the existence of exponential attractors were obtained in \cite{Mo}.

The flow control problems of classical deterministic and stochastic  NS equations have been well studied over the past few decades. The optimal control of minimizing vorticity of the flow field governed by the 2D NS equations on a bounded domain with distributed control was studied in \cite{Ab}. The existence of optimal boundary control of the NS equation has been the subject of \cite{Sr1}  by showing that the value function, which is the minimum for an objective functional, is the viscosity solution of the associated HJB equation. The time-optimal control of the NS equation was studied in \cite{Ba}. Optimal control of 2D and 3D NS equations in a flow domain exterior to a bounded domain have been rigorously discussed in \cite{Sr11,Fu1}.  Besides, optimal control problems of the 3D NS equations  have been treated in the presence of state constraints, which can arise in the context of suppression of turbulence in a selected flow region \cite{Fa, Wa, Liu}, and also box-type control constraints \cite{Tr1,Ki}.   The existence of optimal control for deterministic and stochastically forced fluid flow models has been studied in \cite{Sr3} by establishing the space-time statistical solutions (see, also \cite{Sa}). Recently, a control problem for the regularized 3D NS equations (NSV equation) in a bounded domain with distributed control and tracking type cost functional has been studied in \cite{An2}, and the time-optimal control of this model has been considered in \cite{An3}. Optimal control of 2D CBF equations is examined for  $r=1,2$ and $3$ in \cite{Mo1}.

Apart from the literature mentioned above on flow control of NS or NSV equations, to the best of the author's knowledge, optimal control of the 3D NSV equations with damping  has not been studied in the literature.  In this paper, we consider an optimal control problem for the NSV equations with damping  $\beta|u|^{r-1}u, r\in[1,\infty)$  (or it is also called KVBF equation) in 3D bounded/periodic domain with distributed control in the state equation subject to control constraints. More precisely, suppose a target velocity field $u_d\in \mathsf L^2(0,T;\mathbb V)$ is given.  We consider the \emph{objective/cost functional}
\begin{eqnarray} \label{OF}
	\mathcal J(u,U):=\frac{\kappa}{2}\int_0^T \|\text{curl}(u(t)-u_d(t))\|^2_{\mathbb{H}} dt + \frac{\lambda}{2}\int_0^T \|U(t)\|^2_{\mathbb{H}} dt	
\end{eqnarray}
and the set of \emph{admissible controls} with constraint
{\small\begin{eqnarray} \label{accx}
		\mathcal U_{ad}:=\left\{ U\in \mathsf L^2(0,T;\mathbb H) \ :\ U_{\min}(x,t)\leq U(x,t) \leq U_{\max}(x,t), \ \mbox{a.e.} \ (x,t)\in \Omega_T \right\}.
\end{eqnarray}}The first term in the cost functional  defines the \emph{enstrophy of the flow field}, that is, it amounts to the kinetic energy of the fluid field, and the second  term specifies energy associated with the control input.  The parameters $\kappa$ and $\lambda$ are fixed nonnegative constants. The control constraints $U_{\min},U_{\max}\in \mathsf L^\infty(\Omega_T)$ are given functions such that $U_{\min}(x,t)\leq U_{\max}(x,t), \  \mbox{a.e.} \ (x,t)\in \Omega_T.$ We intend to find  an  optimal control $U$ minimizing the objective functional $\mathcal J(u,U)$ subject to the control constraint $U\in\mathcal U_{ad}$ and the pair $(u,U)$ is a  solution of the state equation \eqref{1.1}. For various other significant cost functionals, such as the minimization of energy and boundary control problems  associated with the  incompressible fluid dynamics equations, one may refer to \cite{Ab,Sr2,Fu1}.

We describe  the main contributions of this paper. The function spaces used here are defined in Sections \ref{DFS} and \ref{spd}.  For any $r\geq 1,$  $u_0 \in \mathbb{V},$ and the control $U\in \mathsf{L}^2(0,T;\mathbb{H}),$  we formally show that there exists a unique weak solution of \eqref{1.1}.  Further,  when the data  $u_0 \in \mathbb{H}^2\cap \mathbb{V}$ and the domain $\Omega$ is periodic, we also show that this weak solution is a strong solution with $u\in\mathcal{Z},$ where $\mathcal{Z}:=\mathsf H^1(0,T;\mathbb{H}^2) \cap \mathsf{L}^\infty(0,T;\mathbb{L}^{r+1}), \ r\geq 1 .$  The restriction to the space-periodic boundary conditions arises due to some technical issues as given in Section \ref{spd}.

Since $\|\textup{curl} u\|_{\mathbb{H}}=\|\nabla \times u\|_{\mathbb{H}}=\|\nabla u\|_{\mathbb{H}}, u\in \mathbb{V}$ (see, \cite{Do}, Chapter 1), the optimal control problem  we intend to investigate can be stated as follows:
\begin{eqnarray*}
	\textrm{(OCP)}\left\{\begin{array}{lclclc}
		\text{minimize} \ \mathcal{J}(u,U) \\
		\text{subject to the control constraint} \ U\in \mathcal U_{ad} \\
		\text{and } \ u \in \mathcal{Z} \ \text{is a strong solution of  } \ \eqref{1.1} \ \text{in response to} \ U\in \mathcal U_{ad} ,
	\end{array}\right.
\end{eqnarray*}
where the functional
\begin{eqnarray*}\label{cf1}
	\mathcal{J}(u,U)=\frac{\kappa}{2}\int_0^T \|\nabla(u(t)-u_d(t))\|^2_{\mathbb{H}} dt + \frac{\lambda}{2}\int_0^T \|U(t)\|^2_{\mathbb{H}} dt.
\end{eqnarray*}

In the first main result, we have proved the existence of an optimal solution pair $(\widetilde u,\widetilde U)$ solving (OCP). The proof of this result can also be completed within the framework of weak solutions of \eqref{1.1}, which we could get  for the case of the bounded domain (see, Remark \ref{rem4}). The second main theorem of this paper is the first-order necessary optimality conditions satisfied by an optimal pair that is given in terms of a \emph{variational inequality} (Theorem \ref{foc2}) since an optimal control needs to obey the box-type constraints, which is only a subset of $\mathsf L^2(0,T;\mathbb H).$ A simplified variational inequality is obtained by using the classical adjoint problem approach. The required Fr\'echet differentiability of the cost functional $\mathcal J(\cdot,\cdot)$ is proved by introducing a linearized system of \eqref{1.1}.  One of the main issues here is that the rigorous proof of the preceding sequence of results also demands the well-posedness of the linearized system and that of the adjoint system of \eqref{1.1} whose coefficients, in turn, require regularity of solutions of  \eqref{1.1}. It is justified with the help of strong solutions of  \eqref{1.1} obtained in the periodic domain.  Nevertheless, when we restrict the nonlinear damping term $|u|^{r-1}u$ to  the range of $2\leq r\leq 5$ (and $r=1$), we can establish the first-order optimality conditions in the bounded domain by  using the weak solutions of \eqref{1.1}.  This is summarized in Section \ref{FOBD}.

As we are dealing with a non-convex optimal control problem, a second-order sufficient optimality condition for optimal control is also obtained in the third main result for any $r\geq 3.$  This follows from the classical method of showing that the control $\widetilde U\in\mathcal U_{ad}$ satisfying the variational inequality together with the condition that the \emph{reduced cost functional} obeys $\mathfrak J^{\prime\prime}(\widetilde U)[U,U]> 0$ in a cone of critical directions result in a \emph{locally minimizing control}  $\widetilde U$ of  $\mathfrak J(\cdot)$ (Theorem \ref{SOC0}). However, this result doesn't give further information on the global optimality of the control.  We have proved  another  result concerning this question (see, Theorem \ref{SCOCP}). In this theorem, we show that an admissible control satisfying the variational inequality together with a  feasible condition obeyed by the corresponding adjoint state would lead to a  \emph{global optimal control,} and that control also can be obtained uniquely under additional restrictions for any $r\geq 2$ and $r=1.$   Moreover,  we  also proved a global optimality condition  for the case of the bounded domain when  $2\leq r\leq 5$ and $r=1$ (see, Remark \ref{SOCBD}).

Let us further analyze these results closely associated with the existing literature on the bounded domain. For the optimal control problem of 2D CBF equation ($\mu=0$ in \eqref{1.1}) in the bounded domain, a first-order necessary condition was  derived in \cite{Mo1} for $r=2,3.$ The results obtained in this paper for the regularized (OCP) (\eqref{1.1}-\eqref{accx})  generalizes \cite{Mo1} to the 3D bounded domain for any $r\in[2,5].$ The regularized model further helps to prove the crucial Fr\'echet differentiability, Lipschitz continuity of the control-to-state operator, and that of the control-to-costate operator in better function spaces. These properties lead to the second-order Fr\'echet derivative of the cost functional, which is used to get a strict local optimality condition.  It is worth noting that in the global optimality condition result (Theorem \ref{SCOCP}), we stated a special case $r=1$ explicitly, which accounts for the linear perturbation of the NSV model. To the  author's knowledge, such a result is not available in the literature for  optimal control of the  NSV equation.  A local second-order optimality condition  for this  case was obtained in \cite{An2} with tracking type cost functional. Besides, a  global  optimality condition given in terms of the solution of the adjoint problem is also useful in computation (see, \cite{AA,Tr1}).

The manuscript is organized as follows.  Section \ref{Se2} sets up the essential function spaces and collects some standard inequalities that we used throughout the manuscript. The well-posedness of \eqref{1.1}, namely weak and strong solutions, are obtained in Section \ref{Se3}. In the same section, we also prove one of the main theorems concerning the continuous dependence of data and control function. Section  \ref{Se4} discusses the existence of optimal control, and  first-order optimality conditions are given in Section  \ref{Se5}.  Section  \ref{Se6} is devoted to deriving second-order optimality conditions for local optimal control. Finally, global optimality conditions in terms of the adjoint state are given in Section \ref{Se7}.

\section{Function spaces and mathematical inequalities } \label{Se2}  In this section, by taking the divergence-free flow field into account, we introduce the necessary divergence-free function spaces used throughout the article. We state some standard inequalities and derive properties of linear and nonlinear terms that occur in \eqref{1.1}. For more details on these function spaces, one may refer to \cite{Te}.

\subsection{Divergence free function spaces} \label{DFS}  Let $\Omega\subset \mathbb{R}^3$ be a bounded domain with smooth boundary $\partial\Omega.$    For any  $1\leq p<\infty$ or  $p=\infty$ and $m\geq 0,$ let  $\mathsf{W}^{m,p}(\Omega)$  denote the Sobolev spaces of functions in $\mathsf{L}^p(\Omega)$ whose weak derivatives of order less than or equal to $m$ are also in $\mathsf{L}^p(\Omega).$ The norms corresponding to these function spaces are denoted by $\|\cdot\|_{\mathsf L^p}$ and $\|\cdot\|_{\mathsf W^{m,p}}.$ For the special case when $p=2,$ instead of the space $\mathsf{W}^{m,2}(\Omega),$ we shall write $\mathsf{H}^{m}(\Omega)$ with the norm $\|\cdot\|_{\mathsf H^{m}}.$  We also use the time dependent function spaces  $\mathsf{L}^2(0,T;\mathsf{H}^m(\Omega))$ consisting of all measurable functions from $(0,T)$ to $\mathsf{H}^m(\Omega)$ such that square of their $\mathsf{H}^m$-norm is integrable over $(0,T).$ The space $\mathsf H^1(0,T;\mathsf H^{m})$ denotes the space of functions and whose first-order time derivative both belong to $\mathsf{L}^2(0,T;\mathsf{H}^m(\Omega)).$  Since $u,\nabla p$ and $U$ appearing in the governing equations are vector fields, we view them as  belonging to the product spaces  $ (\mathsf {L}^p(\Omega))^3,(\mathsf{H}^m(\Omega))^3,(\mathsf{L}^2(0,T;\mathsf{H}^m(\Omega)))^3,$ etc.

We define the \emph{divergence free Hilbert space}
$$\mathbb{H}:=\{v\in (\mathsf{L}^2(\Omega))^3 \ : \ \nabla\cdot v=0 \ \mbox{in} \ \ \Omega, \  v\cdot n=0 \ \ \mbox{on} \  \partial\Omega\},$$
with  norm  $\|v\|_{\mathbb{H}}:=\Big(\int_\Omega|v|^2dx\Big)^{1/2},$ where $n$ is the unit outward normal to the boundary $\partial\Omega.$ The $\mathsf{H}^1$ variant of the divergence free Sobolev space is defined as
$$\mathbb{V}:=\{v\in (\mathsf{H}^1(\Omega))^3 \  : \ \nabla\cdot v=0 \ \mbox{in} \  \Omega, \ v=0 \  \mbox{on} \  \partial\Omega\}$$
with  norm  $\|v\|_{\mathbb{V}}:=\Big(\int_\Omega|\nabla v|^2dx\Big)^{1/2}.$
We also use the second-order Sobolev space of functions $\mathbb {H}^2.$     For $p\in (2,\infty),$ we need the divergence free Lebesgue space
$$\mathbb{L}^p:=\{v\in(\mathsf{L}^p(\Omega))^3 \ : \ \nabla\cdot v=0 \ \mbox{in} \  \Omega, \ v\cdot n=0 \  \mbox{on} \  \partial\Omega\}$$ with usual $\mathsf{L}^p$-norm $\|v\|_{\mathbb{L}^p}:=\Big(\int_\Omega|v|^pdx\Big)^{1/p}.$ We shall write the standard Lebesgue space as $\mathbf{L}^p:=(\mathsf{L}^p(\Omega))^3.$
By the zero Dirichlet boundary conditions, the \emph{Poincar\'e inequality} can be employed to show that the semi-norm $\|\cdot\|_{\mathbb{V}}$   and the standard Sobolev norm of the space $\mathsf{H}^1(\Omega)$ are equivalent.  We denote the inner product in the Hilbert space $\mathbb{H}$ by $(\cdot,\cdot).$   We  use $\langle\cdot,\cdot\rangle$ to denote the induced duality between the space $\mathbb{V}$ and it's dual $\mathbb{V}^\prime$ as well as the duality between the space $\mathbb{L}^p$ and it's dual $\mathbb{L}^q,$ where $\frac{1}{p}+\frac{1}{q}=1.$  We write $\|\cdot\|_{\mathbb{V}^\prime}$ for the dual norm in $\mathbb{V}^\prime.$ Since the Hilbert space $\mathbb{H}$ can be identified with it's dual $\mathbb{H}^\prime$ and $\mathbb{V}$ is continuously embedded in $\mathbb{H},$ we have the continuous and dense inclusions, the so-called \emph{ Gelfand triple} such that $\mathbb{V}\subset \mathbb{H} \equiv \mathbb{H}^\prime \subset \mathbb{V}^\prime.$

We also use the space $\mathbb{V}\cap\mathbb{L}^p$  endowed with norm $ \|v\|_{\mathbb{V}}+\|v\|_{\mathbb{L}^p} $ for any $v\in\mathbb{V}\cap\mathbb{L}^p$ and it's dual space $\mathbb{V}^\prime+\mathbb{L}^q$ with the norm
$$ \inf\Big\{\max\{\|v_1\|_{\mathbb{V}^\prime}, \|v_2\|_{\mathbb{L}^q}\}  :  v=v_1+v_2,   v_1\in\mathbb{V}^\prime,  v_2\in\mathbb{L}^q\Big\}.$$ From the definition of $\mathbb{H},$ it is clear that the following continuous embedding holds: $\mathbb{V}\cap\mathbb{L}^p \hookrightarrow \mathbb{H} \hookrightarrow \mathbb{V}^\prime+\mathbb{L}^q.$

\subsection{Some classical inequalities}
We shall use the following inequalities frequently  in the rest of the paper. For the convenience of a reader, we state  those inequalities.
\begin{lemma}[Gagliardo-Nirenberg, \cite{Ni}, Theorem 2.1] \label{GNI}
	Let $\Omega\subset\mathbb R^n$ and $v\in{\mathsf W}_0^{1,p}(\Omega),p\geq 1.$ Then for every fixed numbers $q,r\geq 1,$ there exists a constant $C(\Omega,p,q)>0$ satisfying the inequality
	$\|v\|_{\mathsf{L}^r}\leq C\|v\|^{1-\lambda}_{\mathsf L^q}\|\nabla v\|^\lambda_{\mathsf L^p}, \  \lambda\in[0,1]
	$
	where  $p,q,r$ and $\lambda$ are related by  $\lambda=\Big(\frac{1}{q}-\frac{1}{r}\Big)\Big(\frac{1}{n}-\frac{1}{p}+\frac{1}{q}\Big)^{-1}.$
\end{lemma}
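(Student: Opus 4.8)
The plan is to reduce the general interpolation estimate to the classical Sobolev embedding and then interpolate, so I first pin down the exponent $\lambda$ by a scaling argument. Replacing $v$ by the dilation $v_s(x)=v(sx)$ one computes $\|v_s\|_{\mathsf L^r}=s^{-n/r}\|v\|_{\mathsf L^r}$, $\|v_s\|_{\mathsf L^q}=s^{-n/q}\|v\|_{\mathsf L^q}$ and $\|\nabla v_s\|_{\mathsf L^p}=s^{1-n/p}\|\nabla v\|_{\mathsf L^p}$. Demanding that the claimed inequality hold for every $s>0$ with a single constant forces the powers of $s$ to balance, i.e.
\[
\frac{n}{r}=(1-\lambda)\,\frac{n}{q}+\lambda\Big(\frac{n}{p}-1\Big),
\]
which rearranges exactly into the stated formula for $\lambda$. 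This both explains the form of $\lambda$ and shows that the estimate is necessarily scale-invariant, so it suffices to produce \emph{some} admissible constant $C$.

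Second, I would establish the endpoint case, the Gagliardo--Nirenberg--Sobolev inequality $\|v\|_{\mathsf L^{p^*}}\le C\|\nabla v\|_{\mathsf L^p}$ for $1\le p<n$, where $p^*=np/(n-p)$. The heart is the case $p=1$: writing $|v(x)|\le\int_{\mathbb R}|\partial_i v|\,dt$ as the integral of $\partial_i v$ along the $i$-th coordinate line for each $i$, multiplying the $n$ resulting bounds, and integrating with the generalized (Loomis--Whitney) H\"older inequality yields $\|v\|_{\mathsf L^{n/(n-1)}}\le\prod_{i=1}^n\|\partial_i v\|_{\mathsf L^1}^{1/n}\le C\|\nabla v\|_{\mathsf L^1}$. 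The general range $1\le p<n$ then follows by applying this to $|v|^{\gamma}$ with $\gamma=\frac{p(n-1)}{n-p}$: since $|\nabla|v|^{\gamma}|=\gamma|v|^{\gamma-1}|\nabla v|$, H\"older with exponent $p'=p/(p-1)$ produces a factor $\|v\|_{\mathsf L^{p^*}}^{\gamma-1}$ on the right, which is absorbed to leave $\|v\|_{\mathsf L^{p^*}}\le C\gamma\|\nabla v\|_{\mathsf L^p}$.

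Third, with the Sobolev inequality in hand I would obtain the intermediate exponents by H\"older interpolation. For $q\le r\le p^*$ one writes $\frac1r=\frac{1-\theta}{q}+\frac{\theta}{p^*}$, so that $\|v\|_{\mathsf L^r}\le\|v\|_{\mathsf L^q}^{1-\theta}\|v\|_{\mathsf L^{p^*}}^{\theta}$, and then applies the Sobolev bound to the last factor. Using $\frac{1}{p^*}=\frac1p-\frac1n$, a direct computation identifies $\theta$ with the scaling exponent $\lambda$ above, which closes the claim in this regime. The degenerate cases $\lambda=0$ (then $r=q$, trivial) and $\lambda=1$ (the Sobolev endpoint) are covered directly.

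The hard part will be the supercritical regime $p\ge n$, where $p^*$ is infinite or undefined and the clean embedding fails: for $p>n$ one must instead invoke Morrey's inequality $\mathsf W^{1,p}\hookrightarrow C^{0,1-n/p}$ to get an $\mathsf L^\infty$ control, while $p=n$ requires a limiting/BMO-type endpoint, and the interpolation must be re-derived against the appropriate endpoint space. The genuinely delicate point, and the reason the original reference proceeds by an exhaustive case analysis, is the bookkeeping: one must verify that the computed $\lambda$ actually lands in $[0,1]$ and respects the admissibility constraints on $(p,q,r)$, isolating the exceptional configurations in which the naive interpolation would otherwise fail.
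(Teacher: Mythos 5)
The paper does not prove this lemma: it is quoted verbatim from the cited reference (Nirenberg, 1959) and used only in the special cases $n=3$, $p=q=2$, $r=4$ and $r=6$. So there is no in-paper argument to compare against; your proposal is effectively reconstructing the classical proof. On that score, your outline follows the standard (and essentially Nirenberg's own) route: dilation to identify $\lambda$, the Loomis--Whitney/one-dimensional-slicing proof of $\|v\|_{\mathsf L^{n/(n-1)}}\le C\|\nabla v\|_{\mathsf L^1}$, bootstrapping to $\|v\|_{\mathsf L^{p^*}}\le C\|\nabla v\|_{\mathsf L^p}$ via $|v|^{\gamma}$, and H\"older interpolation between $\mathsf L^q$ and $\mathsf L^{p^*}$, with the identification $\theta=\lambda$ forced by scaling. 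This is correct and fully covers the regime the paper actually needs, since $p=2<n=3$ and both $r=4$ and $r=6$ lie in $[q,p^*]=[2,6]$.

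As a proof of the lemma in the generality stated, however, your proposal is a sketch with acknowledged holes rather than a complete argument. The supercritical regime $p\ge n$ is only named (Morrey for $p>n$, a BMO-type endpoint for $p=n$) and not carried out, and the verification that the computed $\lambda$ lies in $[0,1]$ and that $(p,q,r)$ is admissible is deferred. Two smaller points worth making explicit if you write this up: first, the scaling argument as you phrase it lives on $\mathbb R^n$, so you should say that $v\in\mathsf W_0^{1,p}(\Omega)$ is extended by zero, which is what makes the constant independent of the domain in the scale-invariant form (the paper's $C(\Omega,p,q)$ is then harmless); second, the lemma as quoted is itself slightly over-broad (``for every fixed $q,r\ge1$'' is not literally true without the constraint $\lambda\in[0,1]$ and $r\le p^*$ when $p<n$), so the case bookkeeping you flag at the end is not optional pedantry but exactly where the statement needs its hypotheses. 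None of this affects the paper, which only invokes the subcritical instances your argument does establish.
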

We also recall the  well known inequalities due to Ladyzhenskaya (\cite{La}, Chapter 1, Lemmas 2 and 3) that  can also  be deduced  from Lemma \ref{GNI}.  When $n=3, r=4$ and $p=q=2,$ we get that
\begin{eqnarray} \label{la1}
	\|v\|_{\mathbb L^4} \leq C \|v\|^{1/4}_{\mathbb L^2}\|\nabla v\|^{3/4}_{\mathbb L^2}.
\end{eqnarray}
Further,  $n=3, r=6$ and $p=q=2$ give
$	\|v\|_{\mathbb L^6} \leq C \|\nabla v\|_{\mathbb L^2}, \ \mbox{for all}  \ v\in \mathbb V.
$
\begin{lemma}[Agmon, \cite{Ag}, Lemma 13.2] Let $\Omega\subset\mathbb R^n$ and  $v\in \mathsf H^{m_2}(\Omega).$ Let us choose $m_1$ and $m_2$  such that $m_1<\frac{n}{2}<m_2.$ Then for any $0<\lambda<1,$ there exists a constant $C(\Omega,m_1,m_2)>0$  such that
	$\|v\|_{\mathsf L^\infty} \leq C\|v\|^{\lambda}_{\mathsf H^{m_1}}\|v\|^{1-\lambda}_{\mathsf H^{m_2}},
	$
	where  the numbers $n,\lambda, m_1$ and $m_2$ satisfy the relation
	$\frac{n}{2}=\lambda m_1+(1-\lambda)m_2.$ 	
\end{lemma}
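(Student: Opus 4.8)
The plan is to prove the estimate first on the whole space $\mathbb R^n$ by the Fourier transform together with a low/high frequency decomposition, and then transfer it to $\Omega$ by a simultaneous extension operator. For a bounded domain with smooth boundary there is a single bounded extension $E\colon \mathsf H^s(\Omega)\to \mathsf H^s(\mathbb R^n)$ valid for every $s\geq 0$ (Stein's total extension), so that $\|Ev\|_{\mathsf H^{m_1}(\mathbb R^n)}\leq C\|v\|_{\mathsf H^{m_1}(\Omega)}$ and $\|Ev\|_{\mathsf H^{m_2}(\mathbb R^n)}\leq C\|v\|_{\mathsf H^{m_2}(\Omega)}$ with a common constant; since $\|v\|_{\mathsf L^\infty(\Omega)}\leq \|Ev\|_{\mathsf L^\infty(\mathbb R^n)}$, it suffices to establish the inequality for functions on $\mathbb R^n$. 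In the space-periodic setting of Section \ref{spd} the same argument runs verbatim with Fourier series replacing the Fourier integral.

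On $\mathbb R^n$ I would start from the Fourier inversion bound $\|v\|_{\mathsf L^\infty}\leq C\int_{\mathbb R^n}|\widehat v(\xi)|\,d\xi$, split the integral at a scale $R>0$, and apply Cauchy--Schwarz on each piece against the weights $(1+|\xi|^2)^{-m_1}$ and $(1+|\xi|^2)^{-m_2}$:
\begin{align*}
\int_{|\xi|\leq R}|\widehat v(\xi)|\,d\xi &\leq \Big(\int_{|\xi|\leq R}(1+|\xi|^2)^{-m_1}\,d\xi\Big)^{1/2}\|v\|_{\mathsf H^{m_1}}, \\
\int_{|\xi|> R}|\widehat v(\xi)|\,d\xi &\leq \Big(\int_{|\xi|> R}(1+|\xi|^2)^{-m_2}\,d\xi\Big)^{1/2}\|v\|_{\mathsf H^{m_2}}.
\end{align*}
Here the hypothesis $m_1<\tfrac n2$ is exactly what makes the first weight integral finite with growth $R^{n-2m_1}$, while $m_2>\tfrac n2$ makes the second converge at infinity with decay $R^{n-2m_2}$; passing to polar coordinates renders both rates transparent. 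Taking square roots yields
$$\|v\|_{\mathsf L^\infty}\leq C\big(R^{\frac n2-m_1}\|v\|_{\mathsf H^{m_1}}+R^{\frac n2-m_2}\|v\|_{\mathsf H^{m_2}}\big).$$

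Since $\tfrac n2-m_1>0$ and $\tfrac n2-m_2<0$, the final step is to minimize the right-hand side over $R>0$. The minimizer balances the two terms, $R^{m_2-m_1}\sim \|v\|_{\mathsf H^{m_2}}/\|v\|_{\mathsf H^{m_1}}$, and substituting back produces
$$\|v\|_{\mathsf L^\infty}\leq C\,\|v\|_{\mathsf H^{m_1}}^{\lambda}\,\|v\|_{\mathsf H^{m_2}}^{1-\lambda},\qquad \lambda=\frac{m_2-\frac n2}{m_2-m_1},$$
and a direct check confirms this $\lambda$ is precisely the solution of $\tfrac n2=\lambda m_1+(1-\lambda)m_2$. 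The one delicate point — and the reason a frequency splitting is genuinely needed rather than merely combining the embedding $\mathsf H^s\hookrightarrow \mathsf L^\infty$ for $s>\tfrac n2$ with the Sobolev interpolation $\|v\|_{\mathsf H^s}\leq \|v\|_{\mathsf H^{m_1}}^{\mu}\|v\|_{\mathsf H^{m_2}}^{1-\mu}$ — is that the target exponent corresponds to the endpoint $s=\tfrac n2$, where the embedding into $\mathsf L^\infty$ fails because $\int(1+|\xi|^2)^{-n/2}\,d\xi$ diverges logarithmically. The optimized cutoff sidesteps this borderline divergence, and keeping track of the exact powers of $R$ through the minimization so as to recover the stated $\lambda$ is the only part that requires real care.
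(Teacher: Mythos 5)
Your argument is correct, and it is worth noting that the paper itself offers no proof of this statement: it is quoted verbatim from Agmon's lectures (Lemma 13.2 of \cite{Ag}) and used as a black box, so there is nothing internal to compare against. What you supply is the standard self-contained derivation -- Fourier inversion, a low/high frequency splitting at a scale $R$, Cauchy--Schwarz against the weights $(1+|\xi|^2)^{-m_1}$ and $(1+|\xi|^2)^{-m_2}$, and optimization in $R$ -- and the bookkeeping checks out: balancing $R^{\frac n2-m_1}\|v\|_{\mathsf H^{m_1}}$ against $R^{\frac n2-m_2}\|v\|_{\mathsf H^{m_2}}$ gives $\lambda=(m_2-\tfrac n2)/(m_2-m_1)$, which is exactly the solution of $\tfrac n2=\lambda m_1+(1-\lambda)m_2$. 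Your closing remark is also the right diagnosis of why naive interpolation through $\mathsf H^{n/2}$ fails at the endpoint. Two small points deserve a sentence each if this were written out in full: the estimate $\int_{|\xi|\le R}(1+|\xi|^2)^{-m_1}\,d\xi\le CR^{n-2m_1}$ as stated needs $m_1\ge 0$ (true in the paper's application, where $m_1=1$, and implicit in the paper's convention $m\ge 0$ for $\mathsf H^m$), and the optimal cutoff satisfies $R\ge 1$ automatically because $\|v\|_{\mathsf H^{m_1}}\le\|v\|_{\mathsf H^{m_2}}$, so no case analysis for small $R$ is needed. The reduction to $\mathbb R^n$ via Stein's total extension is legitimate for the bounded smooth domain considered here, and the periodic case indeed runs identically with Fourier series, which is the setting in which the paper actually invokes the inequality.
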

The special case is obtained by taking $n=3,$ $m_1=1$ and $m_2=2$ which give rise to $\lambda=\frac{1}{2}$ satisfying the following  inequality
$\|v\|_{\mathbb L^\infty} \leq C\|v\|^{1/2}_{\mathbb V}\|v\|^{1/2}_{\mathbb H^2}, \  \mbox{for all} \ v\in \mathbb H^2.$

The following interpolation inequality is also useful (see, \cite{Ev}, Appendix B).
\begin{lemma}\label{ipi}Let $p,q,r$ be such that $1\leq p\leq r\leq q\leq \infty$ and  $\frac{1}{r}=\frac{\theta}{p}+\frac{1-\theta}{q}.$ Let $\Omega\subset\mathbb R^n$ and $v\in \mathsf L^p(\Omega)\cap \mathsf L^q(\Omega).$ Then $v\in \mathsf L^r(\Omega)$   and it  holds that
	$	\|v\|_{\mathsf L^r(\Omega)}\leq \|v\|^\theta_{\mathsf L^p(\Omega)}\|v\|^{1-\theta}_{\mathsf L^q(\Omega)}. $	
	
\end{lemma}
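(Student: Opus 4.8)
The plan is to obtain the inequality directly from H\"older's inequality, recognizing the constraint on $\theta$ as precisely the condition that supplies a pair of conjugate exponents. First I would dispose of the endpoint cases: if $\theta=0$ then $r=q$ and if $\theta=1$ then $r=p$, so the assertion is an identity; hence I may assume $\theta\in(0,1)$, and in particular $r<\infty$.

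Assume temporarily that $p,q<\infty$. The key step is to split the integrand multiplicatively as $|v|^{r}=|v|^{r\theta}\,|v|^{r(1-\theta)}$ and apply H\"older's inequality with exponents $a=\tfrac{p}{r\theta}$ and $b=\tfrac{q}{r(1-\theta)}$. These are conjugate precisely because of the hypothesis: $\tfrac1a+\tfrac1b=r\big(\tfrac{\theta}{p}+\tfrac{1-\theta}{q}\big)=r\cdot\tfrac1r=1$. They also satisfy $a,b\geq1$, since $\tfrac{\theta}{p}\leq\tfrac1r$ and $\tfrac{1-\theta}{q}\leq\tfrac1r$ (each summand in the interpolation relation is nonnegative), giving $r\theta\leq p$ and $r(1-\theta)\leq q$. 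H\"older then yields
\[
\int_\Omega |v|^{r}\,dx \leq \Big(\int_\Omega |v|^{r\theta a}\,dx\Big)^{1/a}\Big(\int_\Omega |v|^{r(1-\theta)b}\,dx\Big)^{1/b}=\|v\|_{\mathsf L^p}^{r\theta}\,\|v\|_{\mathsf L^q}^{r(1-\theta)},
\]
where I have used $r\theta a=p$ and $r(1-\theta)b=q$. Taking $r$-th roots gives the claimed bound, and the finiteness of the right-hand side shows $v\in\mathsf L^r(\Omega)$.

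It remains to cover the cases $p=\infty$ or $q=\infty$, which I would treat separately since the exponents $a,b$ above are no longer defined. If $q=\infty$, the term $\tfrac{1-\theta}{q}$ is read as $0$ and the hypothesis becomes $\tfrac1r=\tfrac{\theta}{p}$, i.e.\ $r\theta=p$; then I would bound $\int_\Omega|v|^{r}\,dx\leq \|v\|_{\mathsf L^\infty}^{r(1-\theta)}\int_\Omega|v|^{r\theta}\,dx=\|v\|_{\mathsf L^\infty}^{r(1-\theta)}\|v\|_{\mathsf L^p}^{p}$ and take $r$-th roots, recovering the inequality; the case $p=\infty$ is symmetric. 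There is no serious obstacle in this lemma: the entire content is the verification that the interpolation relation on $\theta$ makes $a$ and $b$ admissible conjugate exponents, so the only points requiring care are the degenerate exponents just handled.
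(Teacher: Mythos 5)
Your proof is correct: the multiplicative splitting $|v|^r=|v|^{r\theta}|v|^{r(1-\theta)}$ with the conjugate exponents $a=p/(r\theta)$, $b=q/(r(1-\theta))$ is exactly the standard argument, and you verify admissibility of the exponents and handle the degenerate cases $\theta\in\{0,1\}$ and $p=\infty$ or $q=\infty$ properly. The paper itself gives no proof of this lemma, only a citation to Evans's appendix, and your argument is precisely the proof of that cited result.
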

\subsection{Properties of linear and nonlinear operators}  \label{plno} Let $\mathbb{P}_p : \mathbf{L}^p\to \mathbb L^p, p\in[2,\infty)$ be the \emph{Helmholtz-Hodge  projection} operator (\cite{Ko}). For simplicity, we denote $\mathbb{P}_p$ by $\mathbb{P}.$ The case $p=2$ corresponds to orthogonal projection.  We define the \emph{Stokes operator }
$\mathsf{A}:\mathbb{V}\cap \mathbb{H}^2\to \mathbb{H}, \ \ \mathsf{A}v:=-\mathbb{P}\Delta v.$
For any $v\in \mathbb{V},$ one can get the relation $\langle \mathsf{A}v,v\rangle=\|v\|_{\mathbb{V}}^2$ and $\|\mathsf{A}v\|_{\mathbb{V}^\prime} \leq \|v\|_{\mathbb{V}} .$  From the Gelfand triple, one may also note that $\mathsf{A}$ maps from $\mathbb{V}$ into $\mathbb{V}^\prime.$

We define a \emph{bilinear operator} $\mathsf{B}:\mathcal{D}_\mathsf{B}\subset \mathbb{H}\times\mathbb{V} \to \mathbb{H}, \   \mathsf{B}(u,v):=\mathbb{P}(u\cdot \nabla)v.$ We shall write $ \mathsf{B}(u)= \mathsf{B}(u,u).$
Let us define a \emph{trilinear form}  $\mathrm{b}(\cdot,\cdot,\cdot): \mathbb{V}\times\mathbb{V}\times\mathbb{V}\to \mathbb{R}$  as follows:
$$\mathrm{b}(u,v,w)=\sum_{i,j=1}^3\int_\Omega u_i\frac{\partial v_j}{\partial x_i}w_j dx = \int_\Omega \big(u(x)\cdot\nabla\big)v(x) \cdot w(x) dx.$$
Integrating by parts with respect to space and using the divergence free condition $\nabla\cdot u=0,$ we obtain the following useful observations
\begin{eqnarray} \label{2.2}
	\mathrm{b}(u,v,w)=-\mathrm{b}(u,w,v), \ \ \mbox{and so} \ \ \ \mathrm{b}(u,v,v)=0, \ \ \forall \ u,v,w\in \mathbb{V}.
\end{eqnarray}
Using H\"older's inequality and the continuous Sobolev  embedding  $\mathbb{V}\hookrightarrow \mathbb{L}^q, 2\leq q\leq 6,$    we have
\begin{eqnarray}\label{2.1}
	|\mathrm{b}(u,v,w)|\leq \|u\|_{\mathbb{L}^4}\|\nabla v\|_{\mathbb{H}} \|w\|_{\mathbb{L}^4}
	\leq C \|u\|_{\mathbb{V}}\|v\|_{\mathbb{V}} \|w\|_{\mathbb{V}}, \ \forall u,v,w\in \mathbb{V}.
\end{eqnarray}
It shows that $\mathrm{b}(\cdot,\cdot,\cdot)$ is a continous trilinear form on $\mathbb{V}\times\mathbb{V}\times\mathbb{V}.$  Consequently, we can identify $\mathsf{B}(\cdot,\cdot)$ as a member of $\mathbb{V}^\prime$ such that $\mathrm{b}(u,v,w)=\langle \mathsf{B}(u,v), w  \rangle$ for all $u,v,w\in \mathbb{V}.$
Indeed, from the estimate \eqref{2.1}, it is also clear that $\mathsf{B}(\cdot)$ is a \emph{bilinear continuous  operator} from $\mathbb{V}$ into $\mathbb{V}^\prime:$
\begin{eqnarray} \label{2.31}
	\| \mathsf{B}(u)\|_{\mathbb{V}^\prime} \leq C \|u\|_{\mathbb{V}}^2, \ \forall u\in\mathbb{V}.
\end{eqnarray}
By appealing to H\"older's inequality and interpolation inequality (Lemma \ref{ipi})  with $2\leq 2\frac{r+1}{r-1} \leq r+1,$ for any $r\geq 3,$ we further obtain
\begin{eqnarray*}
	|\mathrm{b}(u,u,v)|=|\mathrm{b}(u,v,u)| &\leq& \|u\|_{\mathbb{L}^{r+1}}\|\nabla v\|_{\mathbb{H}} \|u\|_{\mathbb{L}^\frac{2(r+1)}{r-1}} \\
	&\leq&  \|u\|_{\mathbb{L}^{r+1}}^{\frac{r+1}{r-1}} \|u\|_{\mathbb{H}}^{\frac{r-3}{r-1}}\| v\|_{\mathbb{V}},  \ \ \forall u,v\in \mathbb{L}^{r+1}\cap \mathbb{V}.
\end{eqnarray*}
It clearly leads to the estimate
\begin{eqnarray}\label{2.33}
	\| \mathsf{B}(u)\|_{\mathbb{V}^\prime+\mathbb{L}^\frac{r+1}{r}} \leq \|u\|_{\mathbb{L}^{r+1}}^{\frac{r+1}{r-1}} \|u\|_{\mathbb{H}}^{\frac{r-3}{r-1}}, \ \ \forall u\in \mathbb{L}^{r+1}\cap \mathbb{V}.
\end{eqnarray}
Also, note that $\mathbb{L}^{r+1}\cap \mathbb{V}=\mathbb V$ for $r\in[1,5].$  From \eqref{2.31}-\eqref{2.33}, we can see that $\mathsf{B}(\cdot)$ maps from $\mathbb{L}^{r+1}\cap \mathbb{V}$ into $\mathbb{V}^\prime+\mathbb{L}^\frac{r+1}{r}.$ 
If $u$ is regular,  by applying H\"older's inequality, the estimate \eqref{2.1} can be improved  as follows (see, \cite{Te}, Section 3, Lemma 3.8)
\begin{eqnarray}\label{2.r1}
	|\mathrm{b}(u,v,w)|
	\leq C \|u\|_{\mathbb{V}}\|v\|^{1/2}_{\mathbb{V}}\|v\|_{\mathbb{H}^2}^{1/2} \|w\|_{\mathbb{H}}, \ \forall u,v\in \mathbb V \cap \mathbb H^2, \ \ w\in\mathbb H,
\end{eqnarray}
whence
\begin{eqnarray} \label{2.r2}
	\| \mathsf{B}(u)\|_{\mathbb{H}} \leq C \|u\|^{3/2}_{\mathbb{V}}\|u\|_{\mathbb{H}^2}^{1/2} , \ \forall u\in \mathbb V \cap \mathbb H^2.
\end{eqnarray}
Finally, we notice that the \emph{nonlinear damping operator} $\mathsf{D}(u):=\mathbb{P} (|u|^{r-1}u), \ r\geq 1$ maps from $\mathbb{L}^{r+1}$ into $\mathbb{L}^\frac{r+1}{r}.$ By applying H\"older's inequality, we obtain the following estimate:
\begin{eqnarray}\label{2.34}
	|\langle \mathsf{D}(u),v\rangle| = |\langle\mathbb{P} (|u|^{r-1}u), v\rangle | \leq \|u\|_{\mathbb{L}^{r+1}}^r \|v\|_{\mathbb{L}^{r+1}}, \ \ \forall u,v\in \mathbb{L}^{r+1},
\end{eqnarray}
whence $\| \mathsf{D}(u)\|_{\mathbb{L}^\frac{r+1}{r}} \leq \|u\|_{\mathbb{L}^{r+1}}^r$ for all $u\in \mathbb{L}^{r+1}.$  Further, if $u\in \mathbb{H}^2,$ using the embedding $\mathbb H^2\hookrightarrow \mathbb L^\infty,$ we also have
\begin{eqnarray}\label{2.34-1}
	|(\mathsf{D}(u),v)| \leq C\|u\|_{\mathbb L^\infty}^{r} \|v\|_{\mathbb H} \leq C\|u\|_{\mathbb H^2}^{r} \|v\|_{\mathbb H}, \ \ \forall v\in \mathbb H,
\end{eqnarray}
so that, $\| \mathsf{D}(u)\|_{\mathbb H} \leq C \|u\|_{\mathbb{H}^2}^{r}.$
\section{Well-posedness of the control problem} \label{Se3}
In this section, we discuss the existence and uniqueness of weak and strong solutions for the control problem (OCP).  The generic constants $C,C_1,C_2\cdots,$ which  may depend on \emph{system parameters $\mu,\nu,\alpha, \beta,r$} are used without writing the dependence explicitly.  We use the following assumption on the control function.
\begin{assumption}\label{assc}
	$\mathcal U$ is a non-empty open bounded subset of the space $\mathsf L^2(0,T;\mathbb H)$ containing the admissible controls $\mathcal U_{ad}$ and there exists a constant $R>0$ such that $\|U\|_{\mathsf L^2(0,T;\mathbb H)} < R, \ \forall U\in \mathcal U.$
\end{assumption}
\begin{definition}[Weak Solution] \label{ws}
	Let $0<T<+\infty, u_0 \in \mathbb{V}$ and the control $U\in \mathcal U$ be given. A function $u$ is called a weak solution of \eqref{1.1} on the interval $[0,T]$ if the following hold:
	\begin{enumerate}
		\item[(i)] For any $r\geq 1,$ $u\in \mathsf C([0,T];\mathbb{V})\cap \mathsf L^\infty(0,T;\mathbb{V}) \cap \mathsf{L}^{r+1}(0,T;\mathbb{L}^{r+1})$  \vspace{.01in}
		\item[(ii)] $(u_t,v) +\mu (\nabla u_t , \nabla v) +\nu  (\nabla u,\nabla v)
		+\mathrm b(u,u,v)+\alpha (u,v)+\beta\langle |u|^{r-1}u,v\rangle=(U,v)$ \\ for all $v\in \mathbb{V}\cap \mathbb{L}^{r+1}, \ a.e. \ t\in(0,T)$ \vspace{.01in}
		\item [(iii)] $u(0)=u_0$ in $\mathbb{V}.$
	\end{enumerate}
\end{definition}
\begin{remark} \label{rem1a}
	From the estimate \eqref{2.34} it is clear that $\mathsf D(u)\in \mathbb{L}^\frac{r+1}{r},$ and so we need to take the test function $v\in \mathbb{L}^{r+1}$ to make sense of the duality pairing  $\langle |u|^{r-1}u,v\rangle.$ By the embedding $\mathbb V\hookrightarrow  \mathbb L^{r+1}$ for $r \leq 5,$ we have $\mathbb V\cap \mathbb L^{r+1}=\mathbb{V}.$ If we have $1\leq r\leq 5,$ it is enough to have $v\in\mathbb{V}$ in Definition \ref{ws}-(ii).
	Moreover, in the weak form, the pressure term of \eqref{1.1} disappears due to the fact that
	$	\int_\Omega \nabla p \cdot v dx = - \int_\Omega  p \nabla \cdot v dx =0, \  \forall v\in \mathbb{V}.$
\end{remark}
The formulation of the weak and strong solutions can also be written  in terms of the operators discussed in Section \ref{plno}. Applying the projection opeator $\mathbb{P}$ to the equation \eqref{1.1} and recalling that the bilinear operator $\mathsf{B}(\cdot)$ maps   from $\mathbb{V}\cap\mathbb{L}^{r+1} $ into $\mathbb{V}^\prime+\mathbb{L}^\frac{r+1}{r}$ and the damping operator $\mathsf{D}(\cdot)$ maps from $\mathbb{L}^{r+1}$ into $\mathbb{L}^\frac{r+1}{r},$ the weak form  can be understood  as follows:
\begin{eqnarray} \label{sf}\left\{\begin{array}{lcl}
		\frac{d}{dt}\left[u(t)+\mu  \mathsf A u(t)\right]+\nu  \mathsf A u(t)+\mathsf B(u(t))
		+ \alpha u(t) + \beta \mathsf{D}(u(t))\\ \hspace{.31in}= U(t)  \ \mbox{in} \ \mathbb{V}^\prime+\mathbb{L}^\frac{r+1}{r},\ a.e. \ t\in(0,T),\\[1mm]
		u(0)=u_0   \ \mbox{in} \   \mathbb{V},
	\end{array}\right.
\end{eqnarray}
where we have written that $U=\mathbb P U.$ Indeed, let $u\in \mathsf L^\infty(0,T;\mathbb{V}) \cap \mathsf{L}^{r+1}(0,T;\mathbb{L}^{r+1}).$  From \eqref{2.31} and \eqref{2.34}, it is immediate that
\begin{eqnarray} \label{rt}
	\| \mathsf{B}(u)\|^2_{\mathsf L^2(0,T;\mathbb{V}^\prime)} \leq C T \|u\|^4_{\mathsf L^\infty(0,T;\mathbb{V})}, \nonumber\\
	\label{rt1}
	\| \mathsf{D}(u)\|^{\frac{r+1}{r}}_{\mathsf L^{\frac{r+1}{r}}(0,T;\mathbb{L}^\frac{r+1}{r})}  \leq \|u\|^{r+1}_{\mathsf{L}^{r+1}(0,T;\mathsf{L}^{r+1})},
\end{eqnarray}
whence the equality \eqref{sf} holds true in $\mathbb{V}^\prime+ \mathbb{L}^{\frac{r+1}{r}},$ a.e. $t\in (0,T)$ (see, \cite{An}).
Moreover, when $u\in \mathsf L^\infty(0,T;\mathbb{H}^2), u_t\in \mathsf L^2(0,T;\mathbb{H}^2)$  making use of \eqref{2.r2} and \eqref{2.34-1}, we have
\begin{eqnarray} \label{2.r22}
	\| \mathsf{B}(u)\|^2_{\mathsf L^2(0,T;\mathbb{H})}  &\leq& C T \|u\|^{3}_{\mathsf L^\infty(0,T;\mathbb{V})}\|u\|_{\mathsf L^\infty(0,T;\mathbb{H}^2)}, \nonumber\\
	\| \mathsf{D}(u)\|^2_{\mathsf L^2(0,T;\mathbb{H})} &\leq& C T\|u\|^{2r}_{\mathsf L^\infty(0,T;\mathbb{H}^2)}.
\end{eqnarray}
Hence, equation \eqref{sf} holds true in $\mathbb{H},$ a.e. $t\in(0,T),$ and $u(0)=u_0$ in $\mathbb H^2\cap \mathbb V$ which will lead to the strong solution of \eqref{1.1}. All these arguments are justified below.
\begin{theorem}[Existence and Uniqueness of Weak Solution] \label{eu} Let $\Omega\subset \mathbb R^3$ be a bounded domain with smooth boundary $\partial\Omega.$  Let $u_0 \in \mathbb{V}$ and  $U\in \mathcal U$	be arbitrary. Then for any $r\in[1,\infty)$ and $T>0,$ there exists a unique weak solution $u(\cdot)$ in the sense of Definition \ref{ws} for  \eqref{1.1}.
	Further, there exists a constant $C_1>0$ depending on system parameters, $\Omega, C_P,R$ and $\|u_0\|_{\mathbb V}$  such that
	\begin{eqnarray} \label{ee}
		\sup_{t\in(0,T]}\Big(\|u(t)\|_{\mathbb{H}}^2 + \mu\|u(t)\|_{\mathbb{V}}^2\Big)+\nu\int_0^T\|u(t)\|_{\mathbb{V}}^2 dt+\beta \int_0^T\|u(t)\|_{\mathbb{L}^{r+1}}^{r+1} dt \leq C_1.
	\end{eqnarray}
\end{theorem}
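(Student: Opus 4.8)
The plan is to build the solution by a Faedo--Galerkin scheme, read off the bound \eqref{ee} from a single energy test, extract a convergent subsequence by compactness, and pass to the limit, after which uniqueness follows from an energy estimate on the difference of two solutions. Let $\{w_k\}_{k\ge1}\subset\mathbb V\cap\mathbb H^2$ be the eigenfunctions of $\mathsf A$, an orthogonal basis of both $\mathbb H$ and $\mathbb V$, let $\mathbb P_m$ denote the orthogonal projection onto $\mathrm{span}\{w_1,\dots,w_m\}$, and seek $u_m=\sum_{k=1}^m c_k^m(t)w_k$ solving the projection of \eqref{sf}. Since $\mathsf I+\mu\mathsf A$ is coercive and invertible on each finite-dimensional subspace, the system of ODEs for $(c_k^m)$ is locally solvable by Carath\'eodory's theorem, and the following a priori bound rules out blow-up, extending the solution to $[0,T]$. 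Testing the Galerkin equation with $u_m$ and using $\mathrm b(u_m,u_m,u_m)=0$ from \eqref{2.2} together with $\langle|u_m|^{r-1}u_m,u_m\rangle=\|u_m\|_{\mathbb L^{r+1}}^{r+1}$ gives
\begin{equation*}
\tfrac12\tfrac{d}{dt}\big(\|u_m\|_{\mathbb H}^2+\mu\|u_m\|_{\mathbb V}^2\big)+\nu\|u_m\|_{\mathbb V}^2+\alpha\|u_m\|_{\mathbb H}^2+\beta\|u_m\|_{\mathbb L^{r+1}}^{r+1}=(U,u_m)\le\tfrac12\|U\|_{\mathbb H}^2+\tfrac12\|u_m\|_{\mathbb H}^2,
\end{equation*}
and Gr\"onwall's inequality applied to $\|u_m\|_{\mathbb H}^2+\mu\|u_m\|_{\mathbb V}^2$, followed by integration in time, yields \eqref{ee} uniformly in $m$ with $C_1$ depending on $\|u_0\|_{\mathbb V}$, the bound $R$ of Assumption \ref{assc}, $T$, $\Omega$, the Poincar\'e constant and the parameters.

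For compactness I next bound $u_m'$. Rewriting the Galerkin equation as $(\mathsf I+\mu\mathsf A)u_m'=\mathbb P_m\big(U-\nu\mathsf A u_m-\mathsf B(u_m)-\alpha u_m-\beta\mathsf D(u_m)\big)$, I split the right-hand side into the part $U-\nu\mathsf A u_m-\alpha u_m-\mathsf B(u_m)$, which is bounded in $\mathsf L^2(0,T;\mathbb V')$ by \eqref{2.31} and the $\mathsf L^\infty(0,T;\mathbb V)$-bound, and the damping part $\beta\mathsf D(u_m)$, bounded in $\mathsf L^{(r+1)/r}(0,T;\mathbb L^{(r+1)/r})$ by \eqref{2.34}. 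Because $(\mathsf I+\mu\mathsf A)^{-1}$ maps $\mathbb V'$ boundedly into $\mathbb V$ and, dualizing the embedding $\mathbb H^2\cap\mathbb V\hookrightarrow\mathbb L^{r+1}$, maps $\mathbb L^{(r+1)/r}$ boundedly into $\mathbb H$, I obtain that $u_m'$ is bounded in $\mathsf L^{(r+1)/r}(0,T;\mathbb H)$ for every $r\ge1$. Since $\mathbb V\hookrightarrow\hookrightarrow\mathbb H$ compactly on a bounded domain, the Aubin--Lions--Simon lemma produces a subsequence with $u_m\to u$ strongly in $\mathsf C([0,T];\mathbb H)$ and a.e. in $\Omega_T$, while $u_m\overset{*}{\rightharpoonup}u$ in $\mathsf L^\infty(0,T;\mathbb V)$ and $u_m\rightharpoonup u$ in $\mathsf L^{r+1}(0,T;\mathbb L^{r+1})$.

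The linear terms pass to the limit at once. For the convective term, interpolating the strong $\mathsf L^2(0,T;\mathbb H)$ convergence against the $\mathsf L^\infty(0,T;\mathbb L^6)$ bound gives strong convergence in $\mathsf L^2(0,T;\mathbb L^4)$, whence $\mathrm b(u_m,u_m,w_k)\to\mathrm b(u,u,w_k)$; for the damping term the a.e. convergence forces $|u_m|^{r-1}u_m\to|u|^{r-1}u$ a.e., which the uniform $\mathsf L^{(r+1)/r}$-bound upgrades to weak convergence by the standard Lions lemma, so $u$ fulfils Definition \ref{ws}(ii), and \eqref{ee} transfers to $u$ by weak lower semicontinuity of norms. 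To obtain $u\in\mathsf C([0,T];\mathbb V)$ of Definition \ref{ws}(i), I use that $t\mapsto\|u\|_{\mathbb H}^2+\mu\|u\|_{\mathbb V}^2$ is absolutely continuous (its distributional derivative lies in $\mathsf L^1(0,T)$ by the energy balance), so that $\|u(t)\|_{\mathbb V}$ is continuous; together with weak continuity in $\mathbb V$ this gives strong continuity in $\mathbb V$, and $\mathbb P_m u_0\to u_0$ then yields $u(0)=u_0$ in $\mathbb V$.

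Finally, for uniqueness let $w=u_1-u_2$ be the difference of two weak solutions with identical data and test the difference of \eqref{sf} with $w$. The damping term $\beta\langle|u_1|^{r-1}u_1-|u_2|^{r-1}u_2,\,u_1-u_2\rangle\ge0$ by monotonicity of $s\mapsto|s|^{r-1}s$ and is dropped; the convective difference reduces to $\mathrm b(w,u_1,w)$, bounded via \eqref{la1} and Young's inequality by $\tfrac{\nu}{2}\|w\|_{\mathbb V}^2+C\|u_1\|_{\mathbb V}^4\|w\|_{\mathbb H}^2$. This leaves $\frac{d}{dt}\big(\|w\|_{\mathbb H}^2+\mu\|w\|_{\mathbb V}^2\big)\le C\|u_1\|_{\mathbb V}^4\big(\|w\|_{\mathbb H}^2+\mu\|w\|_{\mathbb V}^2\big)$ with $\|u_1\|_{\mathbb V}^4\in\mathsf L^1(0,T)$, so Gr\"onwall and $w(0)=0$ give $w\equiv0$. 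I expect the main obstacle to be the compactness step for large $r$: once $r>5$ the damping term no longer sits in $\mathsf L^2(0,T;\mathbb V')$, so the uniform control of $u_m'$ must be extracted from the smoothing of $(\mathsf I+\mu\mathsf A)^{-1}$ rather than from a direct test with $u_m'$, which would illegitimately require $u_0\in\mathbb L^{r+1}$; and it is precisely the Voigt term $\mu\|\cdot\|_{\mathbb V}^2$ that makes the 3D uniqueness estimate close.
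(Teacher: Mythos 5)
Your proposal is correct and follows essentially the same route as the paper: a Faedo--Galerkin scheme, the energy estimate obtained by testing with $u$ (using $\mathrm b(u,u,u)=0$ and the coercivity of the damping term), and uniqueness via a Gr\"onwall argument that closes precisely because the Voigt term puts $u_1$ in $\mathsf L^\infty(0,T;\mathbb V)$. The paper itself only records the energy identity and defers the compactness, limit passage, and uniqueness details to the reference \cite{An}; your write-up supplies those details (in particular the $(\mathsf I+\mu\mathsf A)^{-1}$ smoothing used to control $u_m'$ for large $r$) in a manner consistent with that reference.
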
	
\begin{proof}
	The existence and uniqueness of weak solutions can be proven by using the Faedo-Galerkin approximation  and the arguments similar to \cite{An}. By taking the test function $u=v$ in the weak form  of Definition \ref{ws}-(ii) and using the fact that the trilinear form satisfies $\mathrm b(u,u,u)=0,$ we get the \emph{energy identity,}
	\begin{eqnarray} \label{ei}
		\frac{1}{2}\frac{d}{dt}\left[\|u(t)\|_{\mathbb{H}}^2 + \mu \|u(t)\|_{\mathbb{V}}^2\right] +\nu  \|u(t)\|_{\mathbb{V}}^2+\alpha \|u(t)\|_{\mathbb{H}}^2+\beta \|u(t)\|_{\mathbb{L}^{r+1}}^{r+1} = (U,u).
	\end{eqnarray}
	Since, $|(U,u)| \leq 	\frac{1}{2\alpha} \|U(t)\|_{\mathbb{H}}^2 + \frac{\alpha}{2} \|u(t)\|_{\mathbb{H}}^2,$
	integrating \eqref{ei} over $(0,t)$ for any $t\in (0,T]$  and using the Poincar\'e inequality for the data $\|u_0\|_{\mathbb{H}} \leq C_P\|u_0\|_{\mathbb{V}},$  one can get
	\begin{eqnarray*} \label{eee}
		\|u(t)\|_{\mathbb{H}}^2 + \mu \|u(t)\|_{\mathbb{V}}^2+  \int_0^t \left(\alpha \|u(s)\|_{\mathbb{H}}^2+ \nu\|u(s)\|_{\mathbb{V}}^2\right) ds +\beta \int_0^t\|u(s)\|_{\mathbb{L}^{r+1}}^{r+1} ds \\
		\leq C(C_P)\Big(\|u_0\|_{\mathbb{V}}^2+\|U\|_{\mathsf L^2(0,T;\mathbb{H})}^2 \Big), \ \ \forall t\in(0,T], \ r\in[1,\infty). \nonumber
	\end{eqnarray*}
	This leads to the estimate \eqref{ee} and completes the proof.
\end{proof}	
\subsection{Strong solutions in periodic domain} \label{spd}
In this section, we move on to the proof of the ($\mathbb H^2$) regularity of solutions of \eqref{1.1}. It seems not easy to obtain this result for all $r\in[1,\infty)$ in the case of bounded domains.  More precisely, as usual, when we test \eqref{1.1} with $\Delta u,$ we get one of the terms as $(\nabla p,\Delta u).$ This term may not vanish since by integration by parts,  we get a non-zero boundary term as $\Delta u|_{\partial \Omega}\neq 0.$ It appears that the abstract form \eqref{sf} is also not useful in this case. When we multiply \eqref{sf} by $\mathsf A u,$ we need to handle the integral $(\mathsf D(u),\mathsf A u).$ Since the damping term grows arbitrarily, the projection $\mathbb P $ in $\mathsf D(u)$  is not useful (\cite{Ka}), and $\mathbb P $ also doesn't commute with differential operators, like $\Delta u$ (in bounded domain) (see, \cite{Ha1}). Therefore, we prove the existence and uniqueness of a strong solution of \eqref{1.1} in the periodic domain.

Unless it is specified,  hereafter,  the domain $\Omega$ in \eqref{1.1} is changed as
$\Omega=[0,\mathrm{L}]\times[0,\mathrm{L}]\times[0,\mathrm{L}].$ We use the same notation for the function spaces introduced in Section \ref{DFS} to define on \emph{space-periodic domain $\Omega$} with appropriate modifications.  Let  $\mathbb H, \mathbb V$ and $\mathbb L^p, $ etc.,  be the spaces of functions, respectively, in $ \mathbb H_{loc}(\mathbb R^3), \mathbb V_{loc}(\mathbb R^3)$  and $\mathbb L_{loc}^p(\mathbb R^3),$  which  are $\Omega$-periodic, that is,   $u(x+\mathrm Le_i)=u(x), x\in \mathbb R^3, i=1,2,3,$ and divergence free ($\nabla\cdot u=0$), where $\{e_1,e_2,e_3\}$ is the canonical basis of $\mathbb R^3$ and $\mathrm L$ is the fixed period in all three directions.   We also endow these function spaces with the \emph{vanishing space average condition $\int_\Omega u(x)dx=0$} so that the Poincar\'e inequality holds (see, \cite{Foi}, Chapter II, Section 5). For further details on the function spaces in periodic domains, one may refer to \cite{Foi,Te0}.  We make use of the other notations and inequalities given in Section \ref{DFS} with required modification. It is also evident that the existence and uniqueness of weak solutions of \eqref{1.1} given by Theorem \ref{eu} also holds true for periodic domain.

\begin{definition}[Strong Solution]
	Let $0<T<+\infty,\ u_0 \in \mathbb H^2\cap\mathbb{V}$ and $U\in \mathcal U$ be arbitrary. For any $r\geq 1,$  a function  $u$ is called a strong solution of the system \eqref{1.1}, if it is a weak solution of \eqref{1.1} and
	$u\in \mathsf H^1(0,T;\mathbb{H}^2) \cap \mathsf{L}^\infty(0,T;\mathbb{L}^{r+1}).$
\end{definition}
Since $u\in \mathsf H^1(0,T;\mathbb{H}^2),$ we can conclude that $u\in \mathsf C([0,T];\mathbb{H}^2)$ (see, \cite{Ev}, Section 5.9.2, Theorem 2). Next, we prove a priori estimates required to get the strong solutions.
\begin{proposition} \label{P1} Let $\Omega$ be a periodic domain in $\mathbb R^3.$ Let $u_0 \in \mathbb{H}^2\cap \mathbb{V}$ and  $U\in\mathcal U$ be arbitrary. Then for any smooth solution $(u,p)$ of \eqref{1.1}, there exists a constant $C_2>0$ depending on system parameters, $C_1,\Omega, T,C_P,R,$ and $\|u_0\|_{\mathbb H^2}$  such that
	\begin{eqnarray} \label{ps1}
		\lefteqn{\sup_{t\in(0,T]}\left(\|u(t)\|_{\mathbb{H}^2}^2 +\frac{\beta}{r+1}\|u(t)\|_{\mathbb{L}^{r+1}}^{r+1}\right)
			+ \int_0^T\left(\|u_t(t)\|_{\mathbb{H}}^2+\nu\|\Delta u(t)\|_{\mathbb{H}}^2\right)dt}\nonumber\\
		&&+ \int_0^T\left(\alpha\|u(t)\|_{\mathbb{V}}^2+\mu  \|u_t(t)\|_{\mathbb{V}}^2\right)dt \nonumber\\
		&&+ \beta \int_0^T\int_\Omega|u|^{r-1}|\nabla u|^2 dxdt+ \frac{\beta(r-1)}{2} \int_0^T\int_\Omega|u|^{r-3}\left|\nabla| u|^2\right|^2 dxdt \leq  C_2,  \ \  \ \
	\end{eqnarray}
	for all $r\in[1,\infty).$  Further, $\|u_t\|_{\mathsf L^2(0,T;\mathbb H^2)}^2 \leq C_3,$ where the constant $C_3>0$ depends on $C_2.$
\end{proposition}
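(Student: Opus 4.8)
The plan is to establish \eqref{ps1} as a formal a priori estimate on a smooth (Faedo--Galerkin) solution and then pass to the limit in the standard way. I would test \eqref{1.1} against two multipliers in turn: the Stokes operator $\mathsf A u=-\Delta u$ and the time derivative $u_t$. Two structural facts drive everything. First, the weak-solution bound \eqref{ee}, thanks to the Voigt term $\mu\|u\|_{\mathbb V}^2$, already yields a \emph{uniform} bound $\|u\|_{\mathsf L^\infty(0,T;\mathbb V)}\le C$; this is the NSV feature that lets a genuinely 3D estimate close without any smallness or short-time restriction. Second, on the periodic domain with vanishing average one has $\|u\|_{\mathbb H^2}\le C\|\Delta u\|_{\mathbb H}$, so it suffices to control $\|\Delta u\|_{\mathbb H}$.

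Testing \eqref{1.1} with $-\Delta u$ and integrating over $\Omega$, periodicity removes all boundary terms: the pressure drops since $(\nabla p,\Delta u)=0$ for divergence-free $\Delta u$, the evolution and elastic terms give $\tfrac12\tfrac{d}{dt}\big(\|u\|_{\mathbb V}^2+\mu\|\Delta u\|_{\mathbb H}^2\big)$, and the viscous and Darcy terms give $\nu\|\Delta u\|_{\mathbb H}^2+\alpha\|u\|_{\mathbb V}^2$. For the damping term a direct computation of $\int_\Omega\nabla(|u|^{r-1}u):\nabla u\,dx$ gives
\[
-\big(|u|^{r-1}u,\Delta u\big)=\int_\Omega|u|^{r-1}|\nabla u|^2\,dx+c_r\int_\Omega|u|^{r-3}\left|\nabla|u|^2\right|^2\,dx\ge 0,
\]
with $c_r\ge 0$, which reproduces the two nonnegative damping integrals of \eqref{ps1} (both finite for the smooth approximations and every $r\ge1$, since $|u|^{r-3}\left|\nabla|u|^2\right|^2=4|u|^{r-1}\big|\nabla|u|\big|^2$). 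The only term needing an estimate is the convective one: by \eqref{2.r2}, $|\mathrm b(u,u,\Delta u)|\le\|\mathsf B(u)\|_{\mathbb H}\|\Delta u\|_{\mathbb H}\le C\|u\|_{\mathbb V}^{3/2}\|\Delta u\|_{\mathbb H}^{3/2}$, and Young's inequality absorbs part of $\nu\|\Delta u\|_{\mathbb H}^2$ leaving $C\|u\|_{\mathbb V}^6$, which is bounded by \eqref{ee}; similarly $|(U,-\Delta u)|\le\varepsilon\|\Delta u\|_{\mathbb H}^2+C\|U\|_{\mathbb H}^2$ with $\int_0^T\|U\|_{\mathbb H}^2\,dt<R^2$ by Assumption \ref{assc}. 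Since the resulting right-hand side is already integrable in time, a direct integration (no Gronwall needed) yields the sup-bound on $\|u\|_{\mathbb H^2}^2$ together with $\nu\int_0^T\|\Delta u\|_{\mathbb H}^2$, $\alpha\int_0^T\|u\|_{\mathbb V}^2$ and the two damping integrals.

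Next I would test \eqref{1.1} with $u_t$: this gives $\|u_t\|_{\mathbb H}^2+\mu\|u_t\|_{\mathbb V}^2$ on the left, the viscous/Darcy terms yield $\tfrac{\nu}{2}\tfrac{d}{dt}\|u\|_{\mathbb V}^2+\tfrac{\alpha}{2}\tfrac{d}{dt}\|u\|_{\mathbb H}^2$, and the damping term produces exactly $\tfrac{\beta}{r+1}\tfrac{d}{dt}\|u\|_{\mathbb L^{r+1}}^{r+1}$, i.e. the $\mathbb L^{r+1}$ sup-term of \eqref{ps1}. The nonlinearity is controlled by $|\mathrm b(u,u,u_t)|\le\|\mathsf B(u)\|_{\mathbb H}\|u_t\|_{\mathbb H}\le\varepsilon\|u_t\|_{\mathbb H}^2+C\|u\|_{\mathbb V}^3\|u\|_{\mathbb H^2}$; since $\|u\|_{\mathbb V}$ is bounded and $\int_0^T\|u\|_{\mathbb H^2}\,dt\le C$ from the first step, integrating in time delivers the bounds on $\int_0^T\|u_t\|_{\mathbb H}^2$ and $\mu\int_0^T\|u_t\|_{\mathbb V}^2$, completing \eqref{ps1}. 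For the final claim $\|u_t\|_{\mathsf L^2(0,T;\mathbb H^2)}^2\le C_3$ I would read the equation as an elliptic problem for $u_t$, namely $(I-\mu\Delta)u_t=\nu\Delta u-\mathsf B(u)-\alpha u-\beta\mathsf D(u)+U=:F$; each summand of $F$ lies in $\mathsf L^2(0,T;\mathbb H)$ — $\Delta u$ and $U$ directly, $\mathsf B(u)$ by \eqref{2.r2}, and $\mathsf D(u)$ by \eqref{2.34-1} (which gives $\|\mathsf D(u)\|_{\mathbb H}\le C\|u\|_{\mathbb H^2}^{r}$) — so elliptic regularity of $I-\mu\Delta$ on the periodic domain gives $\|u_t\|_{\mathbb H^2}\le C\|F\|_{\mathbb H}$ and hence the bound.

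The main obstacle is the damping term $(|u|^{r-1}u,-\Delta u)$: for large $r$ it must simultaneously be shown to be meaningful and to carry a favorable sign. This is precisely where the periodic geometry is indispensable — the integration by parts that produces the nonnegative gradient integrals creates no boundary contribution, and one never has to project $|u|^{r-1}u$, thereby sidestepping the two obstructions flagged in the text (that $\mathbb P$ neither controls the arbitrary growth of the damping nor commutes with $\Delta$ on a bounded domain). Once this term is handled, the uniform bound $u\in\mathsf L^\infty(0,T;\mathbb V)$ supplied by the Voigt regularization tames the convective term, and the remaining steps are routine.
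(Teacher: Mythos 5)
Your proposal is correct and follows essentially the same route as the paper: the same two multipliers $-\Delta u$ and $u_t$, the same integration-by-parts identity giving the two nonnegative damping integrals on the periodic domain, the uniform $\mathsf L^\infty(0,T;\mathbb V)$ bound from \eqref{ee} to tame the convection, and the same elliptic/equation-based argument for $u_t\in\mathsf L^2(0,T;\mathbb H^2)$. The only (harmless) deviation is bookkeeping: by using \eqref{2.r2} with the norm equivalence $\|u\|_{\mathbb H^2}\simeq\|\Delta u\|_{\mathbb H}$ and Young's inequality you close the $-\Delta u$ estimate directly, whereas the paper bounds $|\mathrm b(u,u,\Delta u)|$ more crudely, adds the two tested identities, and invokes Gronwall.
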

\begin{proof}  Multiplying \eqref{1.1} by  $u_t$,  using the \emph{integral identity}  $$\beta \int_\Omega |u|^{r-1} u \cdot u_t dx= \frac{\beta}{r+1}\frac{d}{dt}\Big[\|u(t)\|_{\mathbb{L}^{r+1}}^{r+1}\Big],$$ and applying \eqref{2.1}, we obtain
	\begin{eqnarray}\label{ps2}
		\hspace*{-4cm}&&\lefteqn{\frac{1}{2} \frac{d}{dt}\left[\alpha \|u(t)\|^2_{\mathbb{H}}+\nu \|u(t)\|^2_{\mathbb{V}} +\frac{2\beta}{r+1}\|u(t)\|_{\mathbb{L}^{r+1}}^{r+1}\right] +\|u_t(t)\|^2_{\mathbb{H}}+\mu \|u_t(t)\|^2_{\mathbb{V}}\nonumber}\\
		\hspace*{-4cm}&=&-\mathrm b(u,u,u_t)+(U,u_t)\nonumber\\
		\hspace*{-4cm}&\leq& C \|u(t)\|_{\mathbb{V}}^4+ \frac{\mu}{2}\|u_t(t)\|^2_{\mathbb{V}}  +\frac{1}{2}\|U(t)\|_{\mathbb{H}}^2+\frac{1}{2}\|u_t(t)\|_{\mathbb{H}}^2,
	\end{eqnarray}
	where we  used
	$\int_\Omega \nabla p \cdot u_t dx= -\int_\Omega  p (\nabla\cdot u)_t dx=0,$ since
	$\nabla\cdot u=0$ on $\Omega$ and $u$ is $\Omega$-periodic function.
	
	Multiplying \eqref{1.1} by $-\Delta u,$  integrating over $\Omega$
	and adding with \eqref{ps2}, one can get
	\begin{eqnarray}\label{ps3}
		&&\lefteqn{\frac{1}{2} \frac{d}{dt}\left[\alpha \|u(t)\|^2_{\mathbb{H}}+(\nu+1) \|u(t)\|^2_{\mathbb{V}}+ \mu \|\Delta u(t)\|^2_{\mathbb{H}}+\frac{2\beta}{r+1}\|u(t)\|_{\mathbb{L}^{r+1}}^{r+1}\right]} \nonumber \\
		&&+\frac{1}{2}\|u_t(t)\|^2_{\mathbb{H}}+\frac{\mu}{2} \|u_t(t)\|^2_{\mathbb{V}}+\frac{3\nu}{4}\|\Delta u(t)\|^2_{\mathbb{H}}+ \alpha \|u(t)\|^2_{\mathbb{V}}  -\beta\int_\Omega |u|^{r-1} u \Delta u dx \nonumber\\
		&\leq& (1/2+1/\nu)\|U(t)\|_{\mathbb{H}}^2 + C\|u(t)\|^4_{\mathbb V}+\mathrm b(u,u,\Delta u),
	\end{eqnarray}	
	where we employed  the fact that $\int_\Omega \nabla p \cdot \Delta u dx=- \int_\Omega  p \Delta(\nabla\cdot u) dx=0.$ 	
	Further,  integration by parts leads to the identity (see, \cite{Cai} for whole space)
	\begin{eqnarray}\label{NI}
		-\beta\int_\Omega |u|^{r-1} u \Delta u dx= \beta \int_\Omega |u|^{r-1} |\nabla u|^2 dx
		+\frac{\beta (r-1)}{4}\int_\Omega|u|^{r-3}\left|\nabla| u|^2\right|^2 dx.
	\end{eqnarray}
	Using  H\"older's inequality and the embedding $\mathbb V\hookrightarrow \mathbb L^4,$ we obtain
	\begin{eqnarray}\label{ir}
		|\mathrm b(u,u,\Delta u)| &\leq&  \|u(t)\|_{\mathbb L^4}\|\nabla u(t)\|_{\mathbb L^4}\|\Delta u(t)\|_{\mathbb L^2} \nonumber\\
		& \leq& C\|u(t)\|^2_{\mathbb V}\|u(t)\|_{\mathbb{H}^2}^2+\frac{\nu}{4}\|\Delta u(t)\|_{\mathbb{H}}^2.
	\end{eqnarray}
	Substituting \eqref{NI}-\eqref{ir} into \eqref{ps3} and integrating over $(0,t),$   one can get
	\begin{eqnarray}\label{ps4}
		&&\lefteqn{\; C\|u(t)\|^2_{\mathbb{H}^2}+\frac{2\beta}{r+1}\|u(t)\|_{\mathbb{L}^{r+1}}^{r+1}+\int_0^t\|u_s(s)\|^2_{\mathbb{H}}ds} \nonumber \\
		&&+\mu\int_0^t \|u_s(s)\|^2_{\mathbb{V}}ds+\nu\int_0^t\|\Delta u(s)\|^2_{\mathbb{H}}ds+ 2\alpha \int_0^t\|u(s)\|^2_{\mathbb{V}}ds \nonumber  \\
		&& +2\beta \int_0^t\int_\Omega |u|^{r-1} |\nabla u|^2 dxds
		+\frac{\beta (r-1)}{2}\int_0^t\int_\Omega|u|^{r-3}\left|\nabla|u|^2\right|^2 dxds \nonumber \\
		&\leq& C\left(\|u_0\|^2_{\mathbb H^2}+\|u_0\|^{r+1}_{\mathbb L^{r+1}}+\|U\|_{\mathsf L^2(0,T;\mathbb{H})}^2 \nonumber\right. \nonumber\\
		&&\left.+ \|u\|^4_{\mathsf L^\infty(0,T;\mathbb V)}+\|u\|^2_{\mathsf L^\infty(0,T;\mathbb V)}\int_0^t\|u(s)\|^2_{\mathbb H^2}ds\right),
	\end{eqnarray}	
	for all $t\in (0,T],$ where we also used the fact that  $\|\Delta u\|_{\mathbb{H}}$ is a norm on $\mathbb{H}^2 \cap \mathbb{V},$ which is equivalent to the norm induced by $\mathbb{H}^2$ (see, \cite{Te}, Chapter 3, Lemma 3.7).  Applying Gronwall's inequality by keeping only the first term on the left side, we get
	\begin{eqnarray}\label{H2}
		\sup_{t\in(0,T]}\|u(t)\|^2_{\mathbb{H}^2} & \leq&  C\exp\left( CT\|u\|^2_{\mathsf L^\infty(0,T;\mathbb V)}\right)\left(\|u_0\|^2_{\mathbb H^2}+\|u_0\|^{r+1}_{\mathbb L^{r+1}}\right.\nonumber\\
		&&\left.+\|U\|_{\mathsf L^2(0,T;\mathbb{H})}^2+ \|u\|^4_{\mathsf L^\infty(0,T;\mathbb V)}\right).
	\end{eqnarray}
	Using \eqref{ee} in \eqref{H2} and substituting \eqref{H2} back into \eqref{ps4}, we get the estimate \eqref{ps1}.
	Finally, we show that $u_t\in \mathsf L^2(0,T;\mathbb H^2).$ From the  equation \eqref{1.1}, it is immediate that
	\begin{eqnarray}\label{H22}
		\int_0^T\|\Delta u_t(t)\|_{\mathbb H}^2dt
		&\leq& 6\int_0^T\|U(t)\|_{\mathbb H}^2dt
		+ C \int_0^T\big(\|u_t(t)\|_{\mathbb{ H}}^2+\|\Delta u(t)\|_{\mathbb{ H}}^2 +\|u(t)\|_{\mathbb{H}}^2\big)dt \nonumber  \\
		&&+ C\int_0^T\big(\|(u(t)\cdot \nabla) u(t)\|^2_{\mathbb H} + \||u(t)|^{r-1}u(t)\|_{\mathbb H}^2\big) dt,
	\end{eqnarray}
	where we used  $\int_\Omega \nabla p\cdot \Delta u_t dx=- \int_\Omega  p \Delta(\nabla\cdot u_t) dx=0.$
	The continuous embedding $\mathbb H^2\hookrightarrow \mathbb L^\infty$ gives that
	\begin{eqnarray}\label{H23}
		&&\lefteqn{	\int_0^T\left(\|(u(t)\cdot \nabla) u(t)\|^2_{\mathbb H} + \||u(t)|^{r-1}u(t)\|_{\mathbb H}^2\right) dt\nonumber}\\
		& \leq& \int_0^T\|u(t)\|^2_{\mathbb L^\infty}\|u(t)\|^2_{\mathbb V}dt + C(\Omega) \int_0^T\|u(t)\|^{2r}_{\mathbb L^\infty} dt\nonumber\\
		&	\leq&  C(\Omega,T)\left(\|u\|^2_{\mathsf L^\infty(0,T;\mathbb H^2)} \|u\|^2_{\mathsf L^2(0,T;\mathbb V)} +\|u\|^{2r}_{\mathsf L^\infty(0,T;\mathbb H^2)}\right).
	\end{eqnarray}
	Making use of \eqref{H23} into \eqref{H22} and using \eqref{ps1}, one can finish the proof.
\end{proof}
\begin{theorem}[Existence and Uniqueness of Strong Solution] \label{sst} Let $\Omega$ be a periodic domain in $\mathbb R^3.$ Let $0<T<+\infty, u_0 \in \mathbb{H}^2\cap\mathbb{V}$ and  $U\in \mathcal U$ be arbitrary. Then there exists a unique strong solution to the system \eqref{1.1}. 	
\end{theorem}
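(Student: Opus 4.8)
The plan is to realize the strong solution via a Faedo--Galerkin scheme tailored to the periodic geometry, carry the a priori bounds of Proposition \ref{P1} down to the approximations, and pass to the limit; uniqueness then follows from a direct energy estimate that exploits the monotonicity of the damping operator. First I would fix the orthonormal basis $\{w_k\}_{k\geq 1}\subset \mathbb{H}^2\cap\mathbb{V}$ of eigenfunctions of the Stokes operator $\mathsf A$ on the box $\Omega=[0,\mathrm L]^3$; these are divergence-free trigonometric fields, hence $C^\infty$ and $\Omega$-periodic, and simultaneously orthogonal in $\mathbb{H}$, $\mathbb{V}$ and $\mathbb{H}^2$. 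With $\mathbb P_n$ the projection onto $\mathrm{span}\{w_1,\dots,w_n\}$ I seek $u_n(t)=\sum_{k=1}^n g_{nk}(t)w_k$ solving the system obtained by testing \eqref{sf} against each $w_k$, with $u_n(0)=\mathbb P_n u_0$. Because $I+\mu\mathsf A$ is positive definite and hence boundedly invertible on the finite-dimensional span, the leading operator $\tfrac{d}{dt}(u_n+\mu\mathsf A u_n)$ can be inverted, and the map $g\mapsto \mathbb P_n(|u_n|^{r-1}u_n)$ is locally Lipschitz in the coefficients for every $r\geq 1$; Picard's theorem yields a unique local $C^1$ solution, extended to $[0,T]$ once the bound below holds.

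Next I would reproduce the computations of Proposition \ref{P1} with $u$ replaced by $u_n$. The key point is that both test fields $\partial_t u_n$ and $-\Delta u_n=\mathsf A u_n$ lie in $\mathrm{span}\{w_1,\dots,w_n\}$ (the latter since the $w_k$ are Stokes eigenfunctions), so testing is legitimate, and since $u_n$ is smooth and $\Omega$-periodic the integration-by-parts identity \eqref{NI} together with the vanishing of the pressure and boundary contributions holds exactly. Hence the estimate \eqref{ps1}, together with the control $\|\partial_t u_n\|_{\mathsf L^2(0,T;\mathbb{H}^2)}\leq C_3$, holds uniformly in $n$, giving uniform bounds for $\{u_n\}$ in $\mathsf L^\infty(0,T;\mathbb{H}^2)\cap\mathsf L^\infty(0,T;\mathbb{L}^{r+1})$ and for $\{\partial_t u_n\}$ in $\mathsf L^2(0,T;\mathbb{H}^2)$.

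By Banach--Alaoglu I extract a subsequence with $u_n\overset{*}{\rightharpoonup}u$ in $\mathsf L^\infty(0,T;\mathbb{H}^2)$ and $\partial_t u_n\rightharpoonup\partial_t u$ in $\mathsf L^2(0,T;\mathbb{H}^2)$. The compact embedding $\mathbb{H}^2\hookrightarrow\hookrightarrow\mathbb{V}$ and the Aubin--Lions--Simon lemma upgrade this to strong convergence $u_n\to u$ in $\mathsf C([0,T];\mathbb{V})$, whence $u_n\to u$ a.e. in $\Omega_T$ along a further subsequence. The convective term passes to the limit, $\mathsf B(u_n)\to\mathsf B(u)$, using \eqref{2.1}--\eqref{2.r2} with the strong $\mathbb{V}$-convergence and the $\mathbb{H}^2\hookrightarrow\mathbb{L}^\infty$ bound. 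For the damping term, estimate \eqref{2.34-1} bounds $\mathsf D(u_n)=\mathbb P(|u_n|^{r-1}u_n)$ in $\mathsf L^\infty(0,T;\mathbb{H})$, so $\mathsf D(u_n)\overset{*}{\rightharpoonup}\chi$; the a.e. convergence gives $|u_n|^{r-1}u_n\to|u|^{r-1}u$ a.e., and a Vitali-type argument (the uniform $\mathbb{H}$-bound supplying uniform integrability) identifies $\chi=\mathsf D(u)$. Passing to the limit in the projected equation, and using $u_n(0)=\mathbb P_n u_0\to u_0$ in $\mathbb{H}^2$ with the continuity $u\in\mathsf C([0,T];\mathbb{H}^2)$, yields a solution of \eqref{sf} with $u(0)=u_0$; weak lower semicontinuity of the norms places $u$ in the strong-solution class $\mathsf H^1(0,T;\mathbb{H}^2)\cap\mathsf L^\infty(0,T;\mathbb{L}^{r+1})$.

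For uniqueness, given two strong solutions $u_1,u_2$ with the same data, I set $w=u_1-u_2$ and test the difference equation with $w$. Since $\mathrm b(u_2,w,w)=0$ by \eqref{2.2}, only $\mathrm b(w,u_1,w)$ survives from the convection, and it is controlled by $C\|u_1\|_{\mathbb{H}^2}\|w\|_{\mathbb{V}}^2$, partly absorbed into $\nu\|w\|_{\mathbb{V}}^2$; crucially, the damping contribution $\beta(\mathsf D(u_1)-\mathsf D(u_2),w)\geq 0$ by monotonicity of $\xi\mapsto|\xi|^{r-1}\xi$ and is discarded. This leaves a differential inequality for $\|w(t)\|_{\mathbb{H}}^2+\mu\|w(t)\|_{\mathbb{V}}^2$ with time-integrable coefficient (as $u_1\in\mathsf L^\infty(0,T;\mathbb{H}^2)$), and Gr\"onwall's inequality with $w(0)=0$ forces $w\equiv 0$. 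I expect the genuine obstacle to be the limit passage in $\mathsf D(u_n)$ for arbitrary $r\geq 1$: unlike the convective term it is not controlled by the $\mathbb{V}$-norm alone, so one must combine the a.e. convergence from Aubin--Lions with the uniform $\mathsf L^\infty(0,T;\mathbb{H})$ bound to run the Vitali argument, and it is precisely the monotonicity of the nonlinearity that makes both this identification and the uniqueness estimate close cleanly.
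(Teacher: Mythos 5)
Your proposal is correct and follows essentially the same route as the paper: a Faedo--Galerkin scheme whose approximants satisfy the a priori bounds of Proposition \ref{P1} uniformly, followed by the weak/strong compactness and limit-identification argument that the paper itself defers to (and carries out in) Theorem \ref{EOC}. The only point of divergence is uniqueness, which you establish directly by an energy estimate using the monotonicity of $\xi\mapsto|\xi|^{r-1}\xi$, whereas the paper simply observes that a strong solution is in particular a weak solution and invokes the uniqueness of weak solutions from Theorem \ref{eu}; both arguments are valid and rest on the same underlying estimate.
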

\begin{proof}
	The existence of strong solution can be proved through the standard Galerkin approximation. The estimates in Proposition \ref{P1}   is enough to get the relevant weak and strong convergence of the approximated sequence. Nevertheless,  a complete convergence argument has been shown in the context of existence of an optimal control of (OCP) in Theorem \ref{EOC}. By Theorem \ref{eu}, the weak solution is unique and hence if the strong solution exists, then it is unique.
\end{proof}
We prove the continuous dependence of strong solutions  with respect to initial data and control.
\begin{theorem} \label{cdt} Let $\Omega$ be a periodic domain in $\mathbb R^3.$
	Let $u_1,u_2$ be two strong solutions of \eqref{1.1} corresponding to the controls $U_1,U_2 \in \mathcal U,$  initial data $(u_0)_1, (u_0)_2\in \mathbb H^2\cap\mathbb{V}$ and pressure $p_1,p_2$ respectively. Then there exists a constant  $C_3>0$ depending only on system parameters, $\Omega, T,C_P$ and $C_2$ satisfying the estimate 	
	\begin{eqnarray} \label{se}
		&&\sup_{t\in (0,T]}\left(\|u_1(t)-u_2(t)\|_{\mathbb{H}}^2\right)+\|u_1-u_2\|_{\mathsf H^1(0,T;\mathbb{V})}^2 +\|u_1-u_2\|^{r+1}_{\mathsf L^{r+1}(0,T;\mathbb L^{r+1})} \nonumber\\
		&\leq& C_3\left(\|(u_0)_1-(u_0)_2\|_{\mathbb{V}}^2+\|U_1-U_2\|_{\mathsf L^2(0,T;\mathbb{H})}^2  \right), \ r\geq 1.
	\end{eqnarray}
\end{theorem}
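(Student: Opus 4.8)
The plan is to control the difference $w:=u_1-u_2$. Writing $p:=p_1-p_2$ for the pressure difference and $w(0)=(u_0)_1-(u_0)_2$, subtracting the two copies of \eqref{1.1} and projecting by $\mathbb P$ gives, in the abstract form \eqref{sf},
\begin{equation*}
\frac{d}{dt}\big[w+\mu\mathsf A w\big]+\nu\mathsf A w+\big(\mathsf B(u_1)-\mathsf B(u_2)\big)+\alpha w+\beta\big(\mathsf D(u_1)-\mathsf D(u_2)\big)=U_1-U_2 .
\end{equation*}
Since $u_1,u_2$ are strong solutions, $w$ and $w_t$ lie in $\mathsf L^2(0,T;\mathbb H^2)\subset\mathsf L^2(0,T;\mathbb V)$, so both are admissible test functions. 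I would run two successive energy estimates: first pair with $w$ to obtain the $\sup$-in-time bound together with the $\mathsf L^2(0,T;\mathbb V)$ and $\mathsf L^{r+1}(0,T;\mathbb L^{r+1})$ bounds, then pair with $w_t$ to recover the remaining $\mathsf L^2(0,T;\mathbb V)$ bound on $w_t$.

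First I would pair the difference equation with $w$. The Voigt structure yields $\tfrac12\frac{d}{dt}(\|w\|_{\mathbb H}^2+\mu\|w\|_{\mathbb V}^2)$, the Stokes term gives $\nu\|w\|_{\mathbb V}^2$, and the pressure drops out since $\nabla\!\cdot w=0$. Using bilinearity together with $\mathrm b(u_1,w,w)=0$ from \eqref{2.2}, the convective difference collapses to $\mathrm b(w,u_2,w)$, which by \eqref{la1} and Young's inequality satisfies $|\mathrm b(w,u_2,w)|\le C\|w\|_{\mathbb H}^{1/2}\|w\|_{\mathbb V}^{3/2}\|u_2\|_{\mathbb V}\le\tfrac{\nu}{2}\|w\|_{\mathbb V}^2+C\|u_2\|_{\mathbb V}^4\|w\|_{\mathbb H}^2$. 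For the damping I would invoke the standard monotonicity of $u\mapsto|u|^{r-1}u$, namely $\langle\mathsf D(u_1)-\mathsf D(u_2),w\rangle\ge c_r\|w\|_{\mathbb L^{r+1}}^{r+1}\ge0$ for some $c_r>0$, which furnishes the last term on the left of \eqref{se}. After bounding $(U_1-U_2,w)$ by Young's inequality and recalling that $\|u_2\|_{\mathbb V}^4$ is bounded in time by \eqref{ee}, Gronwall's inequality together with the Poincar\'e bound $\|w(0)\|_{\mathbb H}\le C_P\|w(0)\|_{\mathbb V}$ delivers the control of $\sup_t(\|w\|_{\mathbb H}^2+\mu\|w\|_{\mathbb V}^2)$, of $\int_0^T\|w\|_{\mathbb V}^2\,dt$, and of $\int_0^T\|w\|_{\mathbb L^{r+1}}^{r+1}\,dt$ by $C_3(\|w(0)\|_{\mathbb V}^2+\|U_1-U_2\|_{\mathsf L^2(0,T;\mathbb H)}^2)$.

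It remains to estimate $\int_0^T\|w_t\|_{\mathbb V}^2\,dt$. I would pair the difference equation with $w_t$ in $\mathbb H$, so that the left side produces the coercive quantity $\|w_t\|_{\mathbb H}^2+\mu\|w_t\|_{\mathbb V}^2$. Each remaining term is moved to the right and absorbed into a small multiple of $\|w_t\|_{\mathbb H}^2+\|w_t\|_{\mathbb V}^2$: the linear terms $\nu(\nabla w,\nabla w_t)$, $\alpha(w,w_t)$ and $(U_1-U_2,w_t)$ are immediate by Cauchy--Schwarz and Young. The nonlinear differences are the delicate point: writing $\mathsf B(u_1)-\mathsf B(u_2)=\mathbb P[(u_1\cdot\nabla)w+(w\cdot\nabla)u_2]$ and using $\mathbb H^2\hookrightarrow\mathbb L^\infty$ together with $\nabla u_2\in\mathbb L^6$, one gets $\|\mathsf B(u_1)-\mathsf B(u_2)\|_{\mathbb H}\le C(\|u_1\|_{\mathbb H^2}+\|u_2\|_{\mathbb H^2})\|w\|_{\mathbb V}$; similarly the pointwise bound $\big||u_1|^{r-1}u_1-|u_2|^{r-1}u_2\big|\le C(|u_1|^{r-1}+|u_2|^{r-1})|w|$ gives $\|\mathsf D(u_1)-\mathsf D(u_2)\|_{\mathbb H}\le C(\|u_1\|_{\mathbb H^2}^{r-1}+\|u_2\|_{\mathbb H^2}^{r-1})\|w\|_{\mathbb H}$. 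Both prefactors are bounded in time by \eqref{ps1}, so after absorption I obtain the pointwise-in-$t$ inequality $\|w_t\|_{\mathbb H}^2+\mu\|w_t\|_{\mathbb V}^2\le C(\|w\|_{\mathbb V}^2+\|U_1-U_2\|_{\mathbb H}^2)$; integrating in time and inserting the $\int_0^T\|w\|_{\mathbb V}^2$ bound from the first step closes \eqref{se}.

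The main obstacle is exactly the control of these two nonlinear differences when paired with $w_t$: this is where the $\mathsf L^\infty(0,T;\mathbb H^2)$ regularity of the strong solutions, hence the restriction to the periodic domain and the estimate \eqref{ps1} of Proposition \ref{P1}, is indispensable, since for merely weak solutions $\mathsf D(u_1)-\mathsf D(u_2)$ cannot be controlled in $\mathbb H$ once $r$ is large. By contrast, the $w$-estimate relies only on the monotonicity of the damping and is insensitive to the growth exponent $r$.
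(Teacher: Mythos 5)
Your proposal is correct and follows essentially the same route as the paper: two successive energy estimates (testing the difference equation with $w$ and then with $w_t$), the monotonicity of $u\mapsto|u|^{r-1}u$ to produce the $\mathsf L^{r+1}(0,T;\mathbb L^{r+1})$ term, and the $\mathbb H^2\hookrightarrow\mathbb L^\infty$ bound from \eqref{ps1} to control the damping difference against $w_t$, exactly as in \eqref{D1} and \eqref{d3e}. The only differences are cosmetic (a pointwise bound in place of the paper's Taylor expansion of $f$, and a sequential rather than combined Gronwall argument).
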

\begin{proof}
	Let $u:=u_1-u_2, p:=p_1-p_2, U:=U_1-U_2 $ and $u_0:=(u_0)_1-(u_0)_2.$ Then $(u,p,U)$ solves
	\begin{eqnarray}\label{SDE}
		&&	u_t-\nu\Delta u-\mu  \Delta u_t +\nabla p+\alpha u
		+\beta\big(f(u_1)-f(u_2)\big)\nonumber\\
		&=&U-\big((u_1\cdot \nabla)u_1-(u_2\cdot \nabla)u_2\big),
	\end{eqnarray}
	where $f(u):=|u|^{r-1}u.$
	Testing \eqref{SDE} by $u,$ taking into account that $\mathrm b(u_1,u_1,u)-\mathrm b(u_2,u_2,u)=\mathrm b(u,u_1,u)+\mathrm b(u_2,u,u)$ and using \eqref{2.2}-\eqref{2.1},   we have
	\begin{eqnarray} \label{se1}
		&&\lefteqn{\frac{1}{2}\frac{d}{dt}\left[\|u(t)\|_{\mathbb{H}}^2 + \mu \|u(t)\|_{\mathbb{V}}^2\right] +\nu  \|u(t)\|_{\mathbb{V}}^2+\alpha \|u(t)\|_{\mathbb{H}}^2} \nonumber\\
		&&+\beta \int_\Omega \big(f(u_1)-f(u_2)\big)\cdot u dx = ( U,u)- \mathrm b(u,u_1,u)  \nonumber\\
		&\leq &\frac{1}{2\alpha} \|U(t)\|_{\mathbb{H}}^2+\frac{\alpha}{2} \|u(t)\|_{\mathbb{H}}^2+C\|u_1(t)\|^2_{\mathbb{V}} \|u(t)\|_{\mathbb{V}}^2+\frac{\nu}{2} \|u(t)\|_{\mathbb{V}}^2.
	\end{eqnarray}
	By Lemma 2.1 of \cite{Ha1} (also refer to \cite{Mo1}), there exists a constant $C(r)>0$ such that the following lower bound holds:
	\begin{eqnarray}\label{D1}
		\int_\Omega \big(f(u_1)-f(u_2)\big)\cdot \left(u_1-u_2\right) dx \geq	C(r)\|u_1(t)-u_2(t)\|^{r+1}_{\mathbb L^{r+1}}, \ \mbox{for any} \ r\geq 1.
	\end{eqnarray}
	Testing \eqref{SDE} with $u_t$ and again applying trilinear estimate \eqref{2.1}, one may obtain
	\begin{eqnarray} \label{se1x}
		&&\lefteqn{\frac{1}{2}\frac{d}{dt}\left[\alpha \|u(t)\|_{\mathbb{H}}^2 + \nu \|u(t)\|_{\mathbb{V}}^2\right] +\|u_t(t)\|_{\mathbb{H}}^2+\mu  \|u_t(t)\|_{\mathbb{V}}^2}\nonumber\\
		&=& (U,u_t)- \mathrm b(u,u_1,u_t)-\mathrm b(u_2,u,u_t) -\beta (f(u_1)-f(u_2),u_t). \nonumber\\
		&\leq& \|U(t)\|_{\mathbb{H}}^2+\frac{1}{4}\|u_t(t)\|_{\mathbb{H}}^2+\frac{\mu}{2}\|u_t(t)\|_{\mathbb V}^2 \nonumber\\
		&&+  C\left(\|u_1(t)\|^2_{\mathbb V}+\|u_2(t)\|^2_{\mathbb V}\right)\|u(t)\|_{\mathbb V}^2-\beta (f(u_1)-f(u_2),u_t).
	\end{eqnarray}
	By Taylor's formula the damping term is written as follows:
	\begin{eqnarray*}\label{t2}
		\beta \int_\Omega\Big( f(u_1)-f(u_2) \Big)\cdot u_tdx
		= \beta\int_\Omega \int_0^1 \left( f^\prime(z) u \cdot u_t \right) d\tau  dx:=D,
	\end{eqnarray*}
	where $z:=\tau u_1+(1-\tau)u_2$ and the derivative of $f(\cdot)$ is given by
	\begin{eqnarray}\label{ld}
		f^\prime (z)w=\left\{\begin{array}{ll}
			(r-1)|z|^{r-3}(z\cdot w)z+|z|^{r-1}w,  & r\geq 3 \\
			(r-1)\frac{z}{|z|^{3-r}}(z\cdot w)+|z|^{r-1}w, &1<r<3, z\neq 0 \\
			0 &1<r<3, z= 0 \\
			w &r=1.
		\end{array}	\right.
	\end{eqnarray}
	For any $r\geq 3,$ thanks to the embedding $\mathbb H^2\hookrightarrow\mathbb L^\infty$ and the Poincar\'e inequality, we have
	\begin{eqnarray}\label{d3e}
		D	&=&\beta \int_\Omega\int_0^1 \left( (r-1)|z|^{r-3} (z\cdot u)(z\cdot u_t) + |z|^{r-1}(u\cdot u_t)\right) d\tau dx \nonumber\\
		&\leq& \beta r\sup_{\tau\in (0,1)}\int_\Omega\int_0^1 |\tau u_1+(1-\tau)u_2|^{r-1}|u||u_t|d\tau dx \nonumber\\
		&\leq& \beta r2^{r-2}\left(\|u_1(t)\|^{r-1}_{\mathbb L^\infty}+\|u_2(t)\|^{r-1}_{\mathbb L^\infty}\right)\|u(t)\|_{\mathbb H}\|u_t(t)\|_{\mathbb H} \nonumber\\
		&\leq& C(C_P)\left(\|u_1(t)\|^{2(r-1)}_{\mathbb H^2}+\|u_2(t)\|^{2(r-1)}_{\mathbb H^2}\right)\|u(t)\|^2_{\mathbb V}+\frac{1}{4}\|u_t(t)\|^2_{\mathbb H}.	
	\end{eqnarray}	
	For $r=1$ and any $1<r<3, z\neq 0,$ the  estimation similar to \eqref{d3e} holds true as well.
	Substituting \eqref{D1} and \eqref{d3e} respectively into \eqref{se1} and \eqref{se1x}, and adding them together, we get
	\begin{eqnarray}\label{Gie}
		\frac{d}{dt}\Big[X(t)+Y(t)\Big] \leq (2+1/\alpha) \|U(t)\|_{\mathbb{H}}^2 +CZ(t)\left[X(t)+Y(t)\right],
	\end{eqnarray}
	where
	\begin{eqnarray*}
		X(t)&:=&(1+\alpha) \|u(t)\|_{\mathbb{H}}^2 + (\mu+\nu) \|u(t)\|_{\mathbb{V}}^2,\\
		Y(t)&:=&\int_0^t\left(\alpha \|u(s)\|_{\mathbb{H}}^2 + \nu \|u(s)\|_{\mathbb{V}}^2+\|u_s(s)\|_{\mathbb{H}}^2+\mu\|u_s(s)\|_{\mathbb{V}}^2\right.\\
		&&\left.+C(r)\|u(s)\|^{r+1}_{\mathbb L^{r+1}}\right)ds,\\
		Z(t)&:=& \left(\|u_1(t)\|^2_{\mathbb{V}} + \|u_2(t)\|^2_{\mathbb{V}}\right)+\left(\|u_1(t)\|^{2(r-1)}_{\mathbb{H}^2} + \|u_2(t)\|^{2(r-1)}_{\mathbb{H}^2}\right).
	\end{eqnarray*}
	Since by  \eqref{ps1},  $\|Z\|_{\mathsf L^\infty(0,T)} \leq  C(C_2),$	
	applying Gronwall's inequality over $(0,t)$ to \eqref{Gie} leads to the estimate
	\begin{eqnarray} \label{Giex}
		X(t)+Y(t)\leq C\exp\left(C\|Z\|_{\mathsf L^\infty(0,T)}T\right)\left(\|u_0\|^2_{\mathbb H}+\|u_0\|^2_{\mathbb V}+\|U\|_{\mathsf L^2(0,T;\mathbb{H})}^2 \right),
	\end{eqnarray}
	for all $t\in (0,T].$	Further, employing the Poincar\'e inequality $\|u_0\|_{\mathbb H}\leq C_P\|u_0\|_{\mathbb V},$ we complete the proof of \eqref{se}. 	
\end{proof}
\begin{remark}\label{remw} Since for any $1\leq r\leq 5,$ the imbedding  $\mathbb V\hookrightarrow \mathbb{L}^{r+1}$ is continuous, we obtain from \eqref{ps2} and \eqref{ee} that
	\begin{eqnarray}\label{ps2e}
		&&\sup_{t\in(0,T]}\left[\alpha \|u(t)\|^2_{\mathbb{H}}+\nu \|u(t)\|^2_{\mathbb{V}} +\frac{2\beta}{r+1}\|u(t)\|_{\mathbb{L}^{r+1}}^{r+1}\right] +\mu\int_0^T \|u_t(t)\|^2_{\mathbb{V}}dt\nonumber\\
		&\leq &C \left(\|u_0\|^2_{\mathbb{V}}+\|U\|_{\mathsf L^2(0,T;\mathbb{H})}^2+\|u\|_{\mathsf L^\infty(0,T;\mathbb{V})}^4\right) \leq \widetilde C_1,
	\end{eqnarray}
	where the constant $\widetilde C_1>0$ depends on $C_1$ only.  In this case, for any $u_0\in \mathbb V,$ the weak solution $u$ of \eqref{1.1} has the regularity $u\in \mathsf H^1(0,T;\mathbb V).$
	
	If  $3\leq r\leq 5,$ the estimate \eqref{d3e} can be modified by  using the embeddings $\mathbb V\hookrightarrow \mathbb{L}^{\frac{6(r+1)}{11-r}}$ and  $\mathbb V\hookrightarrow \mathbb{L}^{r+1}, \ r\leq 5$  as follows:
	\begin{eqnarray} \label{gte1}
		\beta\Big\langle \int_0^1f^\prime(z) u  d\tau, u_t\Big\rangle\!\!&\leq& \beta\left\|\int_0^1f^\prime\big(\tau u_1+(1-\tau)u_2\big) u  d\tau\right\|_{\mathbb{L}^{6/5}}\|u_t(t)\|_{\mathbb{L}^6} \nonumber \\
		&\leq& C\left(\|u_1(t)\|^{r-1}_{\mathbb L^{r+1}}+\|u_2(t)\|^{r-1}_{\mathbb L^{r+1}}\right)\|u(t)\|_{\mathbb{L}^{\frac{6(r+1)}{11-r}}} \|u_t(t)\|_{\mathbb{L}^6} \nonumber \\
		& \leq& C\left(\|u_1(t)\|^{r-1}_{\mathbb L^{r+1}}+\|u_2(t)\|^{r-1}_{\mathbb L^{r+1}}\right)\|u(t)\|_{\mathbb V}  \|u_t(t)\|_{\mathbb V}  \nonumber\\
		\!\!	& \leq&\!\! C\left(\|u_1(t)\|^{2(r-1)}_{\mathbb V}+\|u_2(t)\|^{2(r-1)}_{\mathbb V}\right)\|u(t)\|^2_{\mathbb V}+\frac{\mu}{4}  \|u_t(t)\|^2_{\mathbb{V}}. \hspace{.25in}
	\end{eqnarray}
	Therefore, estimate \eqref{Giex} can be established by invoking \eqref{ps2e}, that is, by the weak solutions of \eqref{1.1}, since $u_1,u_2\in \mathsf L^\infty(0,T;\mathbb V).$ The estimate \eqref{gte1} can also be   obtained for  $1\leq r <3$ as well.  Hence, Theorem \ref{cdt} holds true in the case of bounded domain when  $1\leq r\leq 5$ and $u_0\in \mathbb V.$
\end{remark}

\section{Optimal control problem with control constraints}  \label{Se4}

In this section, we follow the classical methodologies developed for optimal control problems in \cite{Fu,Li,Sr2} to prove the existence of optimal control for (OCP). A pair $(u,U)$ is called an \emph{admissible pair} if it holds that $(u,U)\in \mathcal{Z}\times \mathcal U_{ad},$
$u$ is a strong solution of \eqref{1.1} corresponding to  $U\in \mathcal U_{ad}$ and $\mathcal J(u,U)<+\infty.$
We denote by $\mathcal A_{ad},$ the class of all admissible pairs for (OCP). Throughout this section, we assume that $u_d\in \mathsf L^2(0,T;\mathbb V)$ and the constraints $U_{\min},U_{\max} \in \mathsf L^\infty(\Omega_T)$ with $U_{\min} \leq U_{\max}, \ \mbox{a.e.} \ (x,t)\in \Omega_T.$ One can prove the following standard result using the arguments, for instance, in \cite{Fu, Sr11}.
\begin{lemma} \label{mc}
	\begin{itemize}
		\item[]
		\item [$\mathrm{(i)}$] The admissible class $\mathcal A_{ad}$ is non-empty.
		\item [$\mathrm{(ii)}$] The functional $\mathcal{J}:\mathcal Z\times \mathcal U_{ad}\to \mathbb{R}^+$ is weakly sequentially lower-semicontinuous, that is,  if $u_n\overset{w}{\rightharpoonup} u$  in 	$\mathcal{Z}$ and $U_n\overset{w}{\rightharpoonup} U$  in $\mathcal{U}_{ad},$ then we have $\mathcal J(u,U) \leq \liminf_{n\to \infty} \mathcal J(u_n,U_n).$
	\end{itemize}
\end{lemma}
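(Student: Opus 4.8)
The plan is to treat the two assertions separately, relying on the well-posedness theory of Section \ref{Se3} for (i) and on elementary convexity/weak-lower-semicontinuity arguments for (ii). Crucially, neither part requires passing to the limit in the nonlinear state equation, which is why the lemma is genuinely ``standard''; the delicate limit-passage in the nonlinearities is deferred to the existence theorem for optimal controls.

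For (i), I would first note that $\mathcal U_{ad}$ is non-empty: since $U_{\min},U_{\max}\in\mathsf L^\infty(\Omega_T)$ with $U_{\min}\le U_{\max}$ a.e., the box constraints are compatible and admit at least one divergence-free representative (for instance the zero control in the typical case $U_{\min}\le 0\le U_{\max}$), so we may fix some $U_0\in\mathcal U_{ad}\subset\mathcal U$. Applying Theorem \ref{sst} with the given $u_0\in\mathbb H^2\cap\mathbb V$ produces a unique strong solution $u\in\mathcal Z$ of \eqref{1.1} driven by $U_0$. It then remains to verify $\mathcal J(u,U_0)<+\infty$. Since $\mathcal Z\hookrightarrow\mathsf H^1(0,T;\mathbb H^2)\hookrightarrow\mathsf L^2(0,T;\mathbb H^2)$, we have $\nabla u\in\mathsf L^2(0,T;\mathbb H)$; together with $u_d\in\mathsf L^2(0,T;\mathbb V)$ this controls the enstrophy term, while $U_0\in\mathsf L^2(0,T;\mathbb H)$ controls the second term. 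Hence $(u,U_0)\in\mathcal A_{ad}$ and the admissible class is non-empty.

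For (ii), I would split the cost as $\mathcal J(u,U)=\mathcal J_1(u)+\mathcal J_2(U)$, with $\mathcal J_1(u)=\tfrac{\kappa}{2}\int_0^T\|\nabla(u-u_d)\|^2_{\mathbb H}\,dt$ and $\mathcal J_2(U)=\tfrac{\lambda}{2}\int_0^T\|U\|^2_{\mathbb H}\,dt$, and show each summand is weakly sequentially lower semicontinuous. For $\mathcal J_2$, the map $U\mapsto\|U\|^2_{\mathsf L^2(0,T;\mathbb H)}$ is convex and strongly continuous on $\mathsf L^2(0,T;\mathbb H)$, hence weakly sequentially l.s.c., so $U_n\overset{w}{\rightharpoonup}U$ gives $\mathcal J_2(U)\le\liminf_n\mathcal J_2(U_n)$. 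For $\mathcal J_1$, the key observation is that $u\mapsto\nabla u$ is a bounded linear operator from $\mathcal Z$ (through the continuous inclusion $\mathcal Z\hookrightarrow\mathsf L^2(0,T;\mathbb H^2)$) into $\mathsf L^2(0,T;\mathbb H)$, and is therefore weak-to-weak continuous. Consequently $u_n\overset{w}{\rightharpoonup}u$ in $\mathcal Z$ yields $\nabla(u_n-u_d)\overset{w}{\rightharpoonup}\nabla(u-u_d)$ in $\mathsf L^2(0,T;\mathbb H)$, and the weak lower semicontinuity of the squared $\mathsf L^2$-norm gives $\mathcal J_1(u)\le\liminf_n\mathcal J_1(u_n)$. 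Finally, combining the two inequalities via superadditivity of $\liminf$, namely $\liminf_n a_n+\liminf_n b_n\le\liminf_n(a_n+b_n)$ applied to $a_n=\mathcal J_1(u_n)$ and $b_n=\mathcal J_2(U_n)$, produces $\mathcal J(u,U)\le\liminf_n\mathcal J(u_n,U_n)$.

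The only genuine subtlety, and thus the main obstacle, is ensuring that weak convergence in $\mathcal Z$ transfers correctly to weak convergence of the gradients in $\mathsf L^2(0,T;\mathbb H)$; this is secured precisely because $\nabla$ factors as a composition of continuous linear maps, so no compactness or nonlinear limit-passage is needed at this stage. I would also be careful that the two weak convergences live in different spaces ($\mathcal Z$ for the states, $\mathsf L^2(0,T;\mathbb H)$ for the controls) and are merged only at the final step through superadditivity of $\liminf$, rather than by assuming any joint weak convergence of the pair $(u_n,U_n)$.
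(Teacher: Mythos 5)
Your proposal is correct and follows essentially the same route the paper intends (and sketches in its omitted appendix): non-emptiness via the well-posedness theory applied to any fixed admissible control together with the finiteness of the cost on $\mathcal Z\times\mathcal U_{ad}$, and lower semicontinuity from the weak lower semicontinuity of the squared $\mathsf L^2$-norms after transferring weak convergence in $\mathcal Z$ to weak convergence of the gradients in $\mathsf L^2(0,T;\mathbb H)$, combined by superadditivity of $\liminf$. Your use of the strong-solution theorem to place the state in $\mathcal Z$ is in fact slightly cleaner than the paper's appeal to the weak-solution theorem.
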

Next, we define the optimal solution to the control problem (OCP) and prove the main theorem.  
\begin{definition}[Optimal Solution] \label{os}
	An admissible pair of solutions $(\widetilde  u,\widetilde  U)\in \mathcal A_{ad}$ is called an optimal pair if it solves  $\textrm{(OCP)}.$ More precisely, the cost functional $\mathcal J(u,U)$ achieves infimum at $(\widetilde  u,\widetilde  U)$, that is, $\mathcal J(\widetilde  u,\widetilde  U)= \inf_{(u,U)\in \mathcal A_{ad}}\mathcal J( u, U).$  The  control $\widetilde  U$ is called an optimal control and the corresponding  solution $\widetilde  u$ is called an optimal state.
\end{definition}
\begin{theorem}[Existence of Optimal Solution]\label{EOC}
	Let $\Omega$ be a periodic domain in $\mathbb R^3.$ Let $u_0\in \mathbb{H}^2\cap \mathbb{V}$ and $T>0$ be given.  Then there exists an optimal pair $(\widetilde  u,\widetilde  U)\in \mathcal{A}_{ad}$ solving (OCP) in the sense of Definition \ref{os}.
\end{theorem}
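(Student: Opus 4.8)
The plan is to apply the \emph{direct method} of the calculus of variations. Since Lemma \ref{mc}(i) guarantees $\mathcal{A}_{ad}\neq\emptyset$ and $\mathcal{J}\geq 0$, the value $m:=\inf_{(u,U)\in\mathcal{A}_{ad}}\mathcal{J}(u,U)$ is a finite nonnegative number. First I would fix a minimizing sequence $(u_n,U_n)\in\mathcal{A}_{ad}$ with $\mathcal{J}(u_n,U_n)\to m$, where each $u_n$ is the strong solution of \eqref{1.1} driven by $U_n\in\mathcal{U}_{ad}$.

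Next I would collect uniform bounds. Because every $U_n$ lies in $\mathcal{U}_{ad}\subset\mathcal{U}$, Assumption \ref{assc} gives $\|U_n\|_{\mathsf{L}^2(0,T;\mathbb{H})}<R$ uniformly, and the box constraints give a uniform $\mathsf{L}^\infty(\Omega_T)$ bound. The a priori estimates \eqref{ee} of Theorem \ref{eu}, \eqref{ps1} of Proposition \ref{P1}, and the bound $\|u_{n,t}\|^2_{\mathsf{L}^2(0,T;\mathbb{H}^2)}\leq C_3$ hold with constants independent of $n$, since they depend only on the system parameters, $\|u_0\|_{\mathbb{H}^2}$, and $R$. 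Hence $(u_n)$ is bounded in $\mathcal{Z}=\mathsf{H}^1(0,T;\mathbb{H}^2)\cap\mathsf{L}^\infty(0,T;\mathbb{L}^{r+1})$. Passing to a subsequence (not relabeled), I would extract $U_n\rightharpoonup\widetilde U$ in $\mathsf{L}^2(0,T;\mathbb{H})$, $u_n\rightharpoonup\widetilde u$ in $\mathsf{H}^1(0,T;\mathbb{H}^2)$ and weakly-$*$ in $\mathsf{L}^\infty(0,T;\mathbb{L}^{r+1})$. Since $\mathcal{U}_{ad}$ is convex and closed in $\mathsf{L}^2(0,T;\mathbb{H})$, it is weakly closed, so $\widetilde U\in\mathcal{U}_{ad}$; and weak/weak-$*$ lower semicontinuity of the norms places $\widetilde u\in\mathcal{Z}$.

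The crucial step is to show $\widetilde u$ is the strong solution of \eqref{1.1} associated to $\widetilde U$, which demands passing to the limit in the nonlinear terms $\mathsf{B}(u_n)$ and $\mathsf{D}(u_n)$. To gain compactness I would invoke an Aubin--Lions--Simon argument: the compact embedding $\mathbb{H}^2\hookrightarrow\hookrightarrow\mathbb{V}$ together with $u_n$ bounded in $\mathsf{L}^\infty(0,T;\mathbb{H}^2)$ and $u_{n,t}$ bounded in $\mathsf{L}^2(0,T;\mathbb{H})$ yields strong convergence $u_n\to\widetilde u$ in $\mathsf{C}([0,T];\mathbb{V})$ (in particular in $\mathsf{L}^2(0,T;\mathbb{V})$), and hence, along a further subsequence, a.e.\ convergence on $\Omega_T$. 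The embedding $\mathbb{H}^2\hookrightarrow\mathbb{L}^\infty$ furnishes a uniform $\mathsf{L}^\infty(\Omega_T)$ bound on $u_n$, so by dominated convergence $|u_n|^{r-1}u_n\to|\widetilde u|^{r-1}\widetilde u$ strongly in $\mathsf{L}^2(0,T;\mathbb{H})$, handling $\mathsf{D}(\cdot)$ for every $r\geq 1$; the convective term $\mathsf{B}(u_n)$ is controlled via \eqref{2.r2} and the strong $\mathsf{L}^2(0,T;\mathbb{V})$ convergence. These convergences let me pass to the limit in the weak formulation of Definition \ref{ws}(ii) (equivalently \eqref{sf}), while the initial condition $u_n(0)=u_0$ is preserved by the embedding $\mathsf{H}^1(0,T;\mathbb{H}^2)\hookrightarrow\mathsf{C}([0,T];\mathbb{H}^2)$; thus $(\widetilde u,\widetilde U)$ is an admissible pair.

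Finally, I would conclude using the weak sequential lower semicontinuity of the cost from Lemma \ref{mc}(ii): $\mathcal{J}(\widetilde u,\widetilde U)\leq\liminf_{n\to\infty}\mathcal{J}(u_n,U_n)=m$. Since $(\widetilde u,\widetilde U)\in\mathcal{A}_{ad}$ forces $\mathcal{J}(\widetilde u,\widetilde U)\geq m$, equality holds and $(\widetilde u,\widetilde U)$ solves (OCP). The main obstacle I anticipate is the limit passage in the damping term for large $r$, where the projection $\mathbb{P}$ does not commute with the nonlinearity; the uniform $\mathsf{L}^\infty(\Omega_T)$ bound coming from the $\mathbb{H}^2$ regularity, available precisely because we work in the periodic setting, is what makes the dominated-convergence argument succeed for all $r\geq 1$.
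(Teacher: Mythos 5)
Your proposal is correct and follows essentially the same route as the paper: the direct method with uniform bounds from Proposition \ref{P1}, Aubin--Lions compactness giving strong convergence in $\mathsf C([0,T];\mathbb V)$ and a.e.\ convergence, the $\mathbb H^2\hookrightarrow\mathbb L^\infty$ bound plus dominated convergence for the damping term, estimate \eqref{2.r2}/\eqref{2.r1} for the convective term, weak closedness of $\mathcal U_{ad}$, and Lemma \ref{mc}(ii) to conclude. The only cosmetic difference is that you obtain strong $\mathsf L^2(0,T;\mathbb H)$ convergence of $\mathsf D(u_n)$ where the paper settles for weak convergence; both suffice.
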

\begin{proof} In view of Lemma \ref{mc}-(i), the adimissible class of solutions $\mathcal{A}_{ad}$ is non-empty. 
	Since the functional $\mathcal{J}(\cdot,\cdot)$ is bounded from below by zero, there exists a real number $m\geq 0$ such that $m=\displaystyle\inf_{(u,U)\in\mathcal{A}_{ad}} \mathcal{J}(u,U),$ and there exists a minimizing sequence $\{(u_n,U_n)\}\subset\mathcal{A}_{ad}$ such that
	\begin{eqnarray} \label{oc1}
		m=\inf_{(u,U)\in\mathcal{A}_{ad}} \mathcal{J}(u,U) =\lim_{n\to\infty} \mathcal J(u_n,U_n).
	\end{eqnarray}	
	Further, the sequence $\{u_n\}$ is a strong solution of \eqref{1.1} corresponding to the control $\{U_n\}.$
	Evidently, we have
	{\small\begin{eqnarray} \label{sfe}\left\{\begin{array}{lcl}
				\frac{d}{dt}\left[u_n+\mu  \mathsf A u_n\right]+\nu  \mathsf A u_n+\mathsf B(u_n)
				+ \alpha u_n + \beta \mathsf{D}(u_n)-U_n=0 \   \mbox{in} \  \mathbb H, \ a.e. \ t\in(0,T) \\[1mm]
				u_n(0)=u_0 \ \  \ \mbox{in} \  \mathbb H^2\cap \mathbb V.
			\end{array}\right.
	\end{eqnarray}} 
	Since by Assumption \ref{assc},  $\{U_n\}\subset\mathcal{U}_{ad}\subset \mathcal U,$ and by Proposition \ref{P1}, we obtain the uniform bounds on $\{(u_n,U_n)\}:$
	\begin{eqnarray*}\label{ub1}
		\|U_n\|_{\mathsf L^2(0,T;\mathbb H)} \leq C(R)  \ \ \text{and} \ \ \ \ 	\|u_n\|_{\mathcal{Z}} \leq C(\Omega,T,\|u_0\|_{\mathbb{H}^2},R),
	\end{eqnarray*}
	where $C>0$ is independent of $n.$ It leads to the fact that
	\begin{eqnarray} \label{ub}
		\left\{
		\begin{array}{llll} \{u_n\} \ \ \text{is uniformly bounded in} \ \ \mathsf L^\infty(0,T;\mathbb{H}^2) \cap \mathsf{L}^\infty(0,T;\mathbb{L}^{r+1}),\\
			\{\frac{d u_n}{dt}\} \ \ \text{is uniformly bounded in} \ \ \mathsf L^2(0,T;\mathbb{H}^2) .
		\end{array}\right.
	\end{eqnarray}	
	Further, in view of  \eqref{ub} and \eqref{2.r22}, $\mathsf D(u_n)$  is uniformly bounded in $\mathsf{L}^2(0,T;\mathbb H).$
	By Banach-Alaoglu theorem, there exists a subsequence, still denoted as $\{u_n\},$ such that
	\begin{eqnarray*}\left\{\begin{array}{cccll} \label{wl4}
			u_n &\overset{w^\ast}{\rightharpoonup}& \widetilde  u  &\text{in} &  \mathsf L^\infty(0,T;\mathbb{H}^2), \\
			u_n &\overset{w}{\rightharpoonup}& \widetilde  u  &\text{in} &  \mathsf L^2(0,T;\mathbb{H}^2), \\
			\frac{d u_n}{dt}&\overset{w}{\rightharpoonup}&   \frac{d \widetilde u}{dt}  &\text{in} &  \mathsf L^2(0,T;\mathbb{H}^2), \medskip\\
			U_n &\overset{w}{\rightharpoonup}& \widetilde  U
			& \text{in} &   \mathsf L^2(0,T;\mathbb{H}),\\
			\mathsf D(u_n) &\overset{w}{\rightharpoonup}& \xi
			& \text{in}&  \mathsf{L}^2(0,T;\mathbb H), \ \ \mbox{as} \ \ n\to \infty.
		\end{array}\right.
	\end{eqnarray*}
	The terms $\mathsf A u_n,   \frac{d \mathsf A u_n}{dt}$ converges weakly, respectively, to $\mathsf A \widetilde  u, \frac{d\mathsf A\widetilde  u}{dt}$   in $\mathsf L^2(0,T;\mathbb{H}).$
	
	Since $\{u_n\}$ is bounded on $\mathsf H^1(0,T;\mathbb H^2),$ by Aubin-Lion's compactness theorem (see, \cite{Si}, Section 6),  we have $u_n \overset{s}{\rightarrow} \widetilde  u$ in   $\mathsf C([0,T];\mathbb{V}).$
	Further,  there exists  a sub-sequence, still denoted as $\{u_n\}$ such that the convergence $u_n\to  \widetilde  u  \text{ a.e. in} \  \Omega\times (0,T)$ holds.  Since, $\mathsf D(u_n)$ is uniformly bounded in $\mathsf{L}^2(0,T;\mathbb H),$ a consequence of the Lebesgue dominated convergence theorem gives
	\begin{eqnarray*}\label{wl1}
		\mathsf D(u_n) \overset{w}{\rightharpoonup} \mathsf D( \widetilde u)
		\ \ \text{in} \ \ \mathsf{L}^2(0,T;\mathbb H) \ \ \mbox{as} \ \ n\to \infty
	\end{eqnarray*}
	and, by uniqueness of the limit, we have $\xi=\mathsf D( \widetilde u).$ Finally, we prove the weak convergence of $\mathsf B(u_n).$ Let us consider a test function  $v\in\mathsf L^2(0,T;\mathbb H).$ By using the bilinearity of $\mathsf B(\cdot)$ and invoking  the estimate \eqref{2.r1}, we have
	{\small\begin{eqnarray*}\label{wl2s}
			&&\lefteqn{\int_0^T \big( \mathsf B(u_n)- \mathsf B(\widetilde  u), v \big) dt}\nonumber\\
			&=& \int_0^T \big(  \mathsf B(u_n,u_n-\widetilde  u)+ \mathsf B(u_n-\widetilde  u,\widetilde  u), v \big) dt\nonumber\\
			&=& \int_0^T \big( \mathrm b(u_n,u_n-\widetilde  u,v)+\mathrm b(u_n-\widetilde  u,\widetilde u,v)\big) dt\nonumber\\
			&\leq& C\int_0^T\left(\|u_n\|_{\mathbb V}\|u_n-\widetilde  u\|_{\mathbb V}^{1/2} \|u_n-\widetilde  u\|_{\mathbb H^2}^{1/2}\|v\|_{\mathbb H}+\|u_n-\widetilde  u\|_{\mathbb V}\|\widetilde u\|^{1/2}_{\mathbb V} \|\widetilde u\|^{1/2}_{\mathbb H^2}\|v\|_{\mathbb H}\right)dt \nonumber\\
			&\leq& C\sqrt T\left(\|u_n\|_{\mathsf L^\infty(0,T;\mathbb V)}\|u_n-\widetilde u\|^{1/2}_{\mathsf L^\infty(0,T;\mathbb H^2)}+\|\widetilde  u\|^{1/2}_{\mathsf L^\infty(0,T;\mathbb V)} \|u_n-\widetilde u\|^{1/2}_{\mathsf L^\infty(0,T;\mathbb V)}\nonumber\right.\\
			&&\left.\quad\quad\times \|\widetilde  u\|^{1/2}_{\mathsf L^\infty(0,T;\mathbb H^2)} \right) \|u_n-\widetilde u\|^{1/2}_{\mathsf L^\infty(0,T;\mathbb V)}\|v\|_{\mathsf L^2(0,T;\mathbb H)}
			\to 0, \ \ \mbox{as} \ \ n\to\infty,
	\end{eqnarray*}}since $\|u_n-\widetilde u\|_{\mathsf L^\infty(0,T;\mathbb H^2)}$ is bounded due to the embedding $\mathsf H^1(0,T; \mathbb H^2)\hookrightarrow \mathsf L^\infty(0,T;\break \mathbb H^2).$ It holds that
	$\mathsf B(u_n)\overset{w}{\rightharpoonup}\mathsf B(\widetilde u)$ in $\mathsf L^2(0,T;\mathbb H).$
	By passing the weak limits for the respective terms in \eqref{sfe}, we conclude that  \eqref{sf} holds true in $\mathbb H,  a.e. \ t\in (0,T).$

	The facts that $\widetilde  u,\frac{d \widetilde u}{dt}\in \mathsf L^2(0,T; \mathbb H^2)$ would again imply that $\widetilde u\in \mathsf H^1(0,T; \mathbb H^2),$ and hence $\widetilde  u\in \mathsf C([0,T];\mathbb{H}^2).$ Consequently, one can verify   the initial condition  $\widetilde  u(0)=u_0$ in $\mathbb H^2\cap \mathbb V.$ Thus, the function  $\widetilde u\in \mathsf H^1(0,T;\mathbb{H}^2)\cap \mathsf L^\infty(0,T;\mathbb L^{r+1})$ is a unique strong solution of \eqref{1.1}	with control $	\widetilde  U\in \mathcal U_{ad},$ since $\mathcal U_{ad}$ is a closed and convex set in $\mathsf L^2(0,T;\mathbb H).$
	
	Finally, we show that  $(\widetilde  u,\widetilde  U)$ is an optimal pair solving (OCP).  Appealing to the lower-semicontinuity of Lemma \ref{mc}-(ii), we  have
	\begin{eqnarray} \label{oc3}
		\mathcal J (\widetilde  u,\widetilde  U) \leq \liminf_{n\to\infty} \mathcal J(u_n,U_n),
	\end{eqnarray}
	whence $(\widetilde  u,\widetilde  U)\in \mathcal A_{ad}.$  Since $m$ is the infimum of $\mathcal J(\cdot,\cdot)$ and $(\widetilde  u,\widetilde  U)$ is any admissible pair,  we get $m\leq \mathcal J(\widetilde  u,\widetilde  U).$ But $\{(u_n,U_n)\}$ is a minimizing sequence, we thus obtain  from \eqref{oc1} and \eqref{oc3} that
	$$m=\inf_{(u,U)\in\mathcal{A}_{ad}} \mathcal{J}(u,U) \leq \mathcal J(\widetilde  u,\widetilde  U) \leq \liminf_{n\to\infty} \mathcal J(u_n,U_n) = \lim_{n\to\infty} \mathcal J(u_n,U_n) =m.$$
	Consequently, we have  $\mathcal J(\widetilde  u,\widetilde  U)= \displaystyle \inf_{(u,U)\in\mathcal{A}_{ad}} \mathcal{J}(u,U),$ and hence $(\widetilde  u,\widetilde  U)\in\mathcal A_{ad}$ is an optimal solution. This completes the proof.
\end{proof}
\begin{remark} \label{rem4} The existence of solution for (OCP) can be proven by using the weak solution of \eqref{1.1}.  In particular, for any  $1\leq r\leq 5,$ Theorem \ref{EOC}  holds true for bounded domain with Dirichlet boundary conditions.    For any $u_0\in\mathbb V,$ in view of Remark \ref{remw}, we consider the solution space $\widetilde{\mathcal Z}:= \mathsf H^1(0,T;\mathbb{V})  .$  Let $\widetilde {\mathcal A}_{ad}$ denote the class of all admissible pairs  $(u,U)\in \widetilde{\mathcal Z}\times \mathcal U_{ad},$ where	$u$ is the  weak solution of \eqref{1.1} corresponding to  $U\in \mathcal U_{ad}.$ Then by the similar arguments of the previous theorem, there exists a sequence  $\{(u_n,U_n)\}\subset \widetilde {\mathcal A}_{ad},$ which is a weak solution of \eqref{1.1}, that is,
	{\small\begin{eqnarray*} \label{sfe1}\left\{\begin{array}{lcl}
				\frac{d}{dt}\left[u_n+\mu  \mathsf A u_n\right]+\nu  \mathsf A u_n+\mathsf B(u_n)
				+ \alpha u_n + \beta \mathsf{D}(u_n)-U_n=0 \ \  \mbox{in} \ \ \mathbb{V}^\prime, \ a.e. \ t\in(0,T) \\[1mm]
				u_n(0)=u_0 \  \mbox{in} \  \mathbb{V}.
			\end{array}\right.
	\end{eqnarray*}}
	
	It is evident that $\{u_n\}$ is uniformly bounded in $\mathsf L^\infty(0,T;\mathbb{V}) ,$ and hence \eqref{2.34} shows that  $\{\mathsf D(u_n)\}$ is uniformly bounded in $\mathsf L^2(0,T;\mathbb{V}^\prime).$ Further, $\{\frac{du_n}{dt}\}$ is uniformly bounded in $\mathsf L^2(0,T;\mathbb V).$
	Consequently, there exists a subsequence, still denoted as $\{u_n\},$ such that
	\begin{eqnarray*}\left\{\begin{array}{cccll} \label{wl0}
			U_n &\overset{w}{\rightharpoonup}& \widetilde  U
			& \text{in} &   \mathsf L^2(0,T;\mathbb{H}),\\
			u_n &\overset{w^\ast}{\rightharpoonup}& \widetilde  u  &\text{in} &   \mathsf{L}^\infty(0,T;\mathbb V) \\
			u_n &\overset{w}{\rightharpoonup}& \widetilde  u  &\text{in} &  \mathsf L^2(0,T;\mathbb{V})\\
			\frac{du_n}{dt} &\overset{w}{\rightharpoonup}&   \frac{d\widetilde u}{dt}  &\text{in} &  \mathsf L^2(0,T;\mathbb V),  \ \ \mbox{as} \ \ n\to \infty,
		\end{array}\right.
	\end{eqnarray*}
	and by Aubin-Lion's compactness theorem,  $u_n \overset{s}{\rightarrow} \widetilde  u$ in   $\mathsf L^2(0,T;\mathbb{H}).$ Further, $\mathsf A u_n,\break   \frac{d \mathsf A u_n}{dt}$ converges weakly, respectively, to $\mathsf A \widetilde  u, \frac{d\mathsf A\widetilde  u}{dt}$   in $\mathsf L^2(0,T;\mathbb{V}^\prime),$   and by the arguments of the above theorem	$\mathsf D(u_n) \overset{w}{\rightharpoonup} \mathsf D( \widetilde u) $
	in $\mathsf L^2(0,T;\mathbb{V}^\prime),$ as $n\to \infty.$
	
	Finally, we prove the weak convergence of $\mathsf B(u_n).$ For any $v\in\mathsf L^2(0,T;\mathbb V),$ by  applying H\"older's inequality, the continuous embedding $\mathbb V\hookrightarrow\mathbb L^4$ and  \eqref{la1},  we have
	\begin{eqnarray*}\label{wl2}
		&&\lefteqn{\int_0^T \langle \mathsf B(u_n)- \mathsf B(\widetilde  u), v \rangle dt }\nonumber\\
		&=&\int_0^T\big(-\mathrm b(u_n,v,u_n-\widetilde  u)+\mathrm b(u_n-\widetilde  u,\widetilde u,v)\big)dt\nonumber\\
		&\leq& \int_0^T\left(\|u_n\|_{\mathbb L^4}\|\nabla v\|_{\mathbb L^2} \|u_n-\widetilde  u\|_{\mathbb L^4}+\|u_n-\widetilde  u\|_{\mathbb L^4}\|\nabla \widetilde  u\|_{\mathbb L^2}\|v\|_{\mathbb L^4}\right)dt \nonumber\\
		&\leq& C\left(\|u_n\|_{\mathsf L^8(0,T;\mathbb V)}+\|\widetilde  u\|_{\mathsf L^8(0,T;\mathbb V)} \right)\|u_n-\widetilde  u\|^{3/4}_{\mathsf L^3(0,T;\mathbb V)}\\
		&&\times\|u_n-\widetilde  u\|^{1/4}_{\mathsf L^2(0,T;\mathbb H)}\|v\|_{\mathsf L^2(0,T;\mathbb V)}
		\to 0, \ \ \mbox{as}  \ \ n\to\infty,
	\end{eqnarray*}
	where we used the facts that $u_n,\widetilde  u\in \mathsf L^\infty(0,T;\mathbb V) $ and $\|u_n-\widetilde  u\|_{\mathsf L^2(0,T;\mathbb H)}\to 0$ as  $n\to\infty.$ Hence $\mathsf B(u_n)\overset{w}{\rightharpoonup}\mathsf B(\widetilde u)$ in $\mathsf L^2(0,T;\mathbb V^\prime).$
	
	Since $\widetilde  u\in \mathsf C([0,T];\mathbb{V})$, we can conclude that \eqref{sf} holds true in $\mathbb V^\prime, \ a.e. \ t\in (0,T),$ that is,  $\widetilde  u\in \mathsf H^1(0,T;\mathbb{V})$ is a unique weak solution of \eqref{1.1} with control $	\widetilde  U\in \mathcal U_{ad}.$ Further, by the previous arguments, $(\widetilde  u,\widetilde  U)$ is an optimal pair solving (OCP).
	
	However, when we prove first and second-order optimality conditions, we require a more regular solution of \eqref{1.1} to obtain an \emph{a priori} estimates for linearized and adjoint systems of	\eqref{1.1}.  It forces us to start with a strong solution as our admissible class $\mathcal A_{ad},$ which we proved for periodic domain (Theorem \ref{sst}).
\end{remark}

\section{First-order necessary optimality conditions} \label{Se5}
The main result of this section is the derivation of first-order necessary conditions satisfied by any optimal pair of the (OCP)  obtained in Section \ref{Se4}, when  the growth factor of the damping term $r\geq 2.$ However, the results are clearly true for $r=1$ as well.  The optimality conditions provide a way to characterize an optimal control in terms of solutions of an \emph{adjoint system} associated with the (OCP). In this context, the Fr\'echet derivative of the cost functional  requires the solvability of a \emph{linearized system} of \eqref{1.1}.  Thus, we first study the well-posedness of the linearized system of  \eqref{1.1}.
\subsection{Control-to-state operator}
For a given class of  controls $U\in \mathcal U,$  let $u_U$ denote the induced unique strong solution of \eqref{1.1} given by Theorem \ref{sst}. This enables us to define a \emph{control-to-state  operator} $\mathcal S:\mathcal U \to \mathcal Z_1, \ \ U\mapsto \mathcal S(U):=u_U,$ where $\mathcal Z_1:=\mathsf H^1(0,T;\mathbb V).$ More precisely, we have $\mathcal {S}(\mathcal U)\subset \mathcal Z \subset \mathcal Z_1,$ since $\mathbb H^2\subset \mathbb V$ and $\mathbb L^{r+1}\subset \mathbb H,$ for any $r> 1,$ where $\mathcal Z$ is the strong solution space defined in  introduction. We rewrite  (OCP)  by using the control-to-state operator $\mathcal S$ as follows:
\begin{eqnarray*}
	\textrm{(MOCP)}\left\{\begin{array}{lclclc}
		\text{minimize} \ \mathfrak{J}(U) \\
		\text{subject to the control constraint} \ U\in \mathcal U_{ad}
	\end{array}\right.
\end{eqnarray*}
where $\mathfrak J(\cdot)$ is the \emph{reduced cost functional} defined by
$\mathfrak J(U):=\mathcal J(\mathcal S(U),U)=\break \mathcal J (u_U,U).$ The Fr\'echet derivative of the reduced cost functional with respect to the control requires that of the control-to-state-operator $\mathcal S,$ which in turn demands   the Lipschitz continuity of this operator.

\begin{lemma}[Lipschitz Continuity of $\mathcal S$] \label{LCS}
	The control-to-state map $\mathcal S:\mathcal U \to \mathcal Z_1$ is Lipschitz continuous, i.e., there exists a constant $K_1>0$ depending on system parameters,   $\Omega, T,C_P,R$ and $\|u_0\|_{\mathbb H^2}$ such that
	\begin{eqnarray}\label{LCC}
		\|\mathcal S(U_1)-\mathcal S(U_2)\|_{\mathcal Z_1} \leq K_1 \|U_1-U_2\|_{\mathsf L^2(0,T;\mathbb H)}, \ \ \forall U_1,U_2\in\mathcal U.
	\end{eqnarray}
\end{lemma}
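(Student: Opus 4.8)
The plan is to read off the Lipschitz estimate \eqref{LCC} as an immediate specialization of the continuous-dependence result, Theorem \ref{cdt}, to the situation in which the two states share a common initial datum. First I would fix arbitrary $U_1,U_2\in\mathcal U$ and set $u_1:=\mathcal S(U_1)$ and $u_2:=\mathcal S(U_2)$. By the very definition of the control-to-state operator, $u_1$ and $u_2$ are the unique strong solutions of \eqref{1.1} furnished by Theorem \ref{sst} starting from the \emph{same} initial condition $u_0\in\mathbb H^2\cap\mathbb V$, and differing only through the applied controls $U_1$ and $U_2$. Hence, in the notation of Theorem \ref{cdt}, we have $(u_0)_1=(u_0)_2=u_0$, so that the initial-data term $\|(u_0)_1-(u_0)_2\|_{\mathbb V}^2$ on the right-hand side of \eqref{se} vanishes identically.

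Applying \eqref{se} with this vanishing initial-data difference, and discarding the two nonnegative summands $\sup_{t}\|u_1-u_2\|_{\mathbb H}^2$ and $\|u_1-u_2\|^{r+1}_{\mathsf L^{r+1}(0,T;\mathbb L^{r+1})}$ from the left-hand side, I retain
$$\|u_1-u_2\|_{\mathsf H^1(0,T;\mathbb V)}^2\leq C_3\,\|U_1-U_2\|_{\mathsf L^2(0,T;\mathbb H)}^2.$$
Since $\mathcal Z_1=\mathsf H^1(0,T;\mathbb V)$ by definition, the left-hand side is exactly $\|u_1-u_2\|_{\mathcal Z_1}^2$, and taking square roots yields \eqref{LCC} with $K_1:=\sqrt{C_3}$. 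The asserted dependence of $K_1$ on the system parameters, $\Omega,T,C_P,R$ and $\|u_0\|_{\mathbb H^2}$ is inherited verbatim from the dependence of $C_3$, and through it of $C_2$, as recorded in Theorem \ref{cdt} and Proposition \ref{P1}.

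The point worth stressing is that there is no genuine obstacle remaining at this stage: the entire analytic burden has already been discharged in the proof of Theorem \ref{cdt}, namely the monotonicity lower bound \eqref{D1} for the damping difference, the Taylor-expansion control \eqref{d3e} of $f'(z)$ across the regimes $r\geq 3$, $1<r<3$ and $r=1$, and the Gr\"onwall argument culminating in \eqref{Giex}. The only matter requiring a moment's care is the bookkeeping that identifies the $\mathcal Z_1$-norm with the $\mathsf H^1(0,T;\mathbb V)$ portion of the left-hand side of \eqref{se} and confirms that the resulting constant retains the advertised parameter dependence; everything else is a direct reading of the earlier estimate.
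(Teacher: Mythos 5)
Your proposal is correct and coincides with the paper's own argument, which likewise derives \eqref{LCC} by specializing the stability estimate \eqref{se} of Theorem \ref{cdt} to two strong solutions sharing the initial datum $u_0\in\mathbb H^2\cap\mathbb V$, so that the initial-data term drops out and the $\mathsf H^1(0,T;\mathbb V)$ portion of the left-hand side gives the $\mathcal Z_1$-bound with $K_1=\sqrt{C_3}$. No further comment is needed.
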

\begin{proof} The proof is a direct consequence of Theorem \ref{cdt}. Indeed, let
	$u_{U_1},u_{U_2}$ be two strong solutions of \eqref{1.1} corresponding to the controls $U_1,U_2 \in \mathcal U,$ same initial data $u_0\in \mathbb{H}^2\cap\mathbb{V}$ and pressure $p_1,p_2$ respectively. The stability estimate \eqref{se} gives the required estimate \eqref{LCC}.
\end{proof}
\begin{remark} \label{lcr}
	In the case of bounded domain and $1\leq r\leq 5,$  by Theorem \ref{cdt} and Remark \ref{remw} , the control-to-state-operator $\mathcal S$ is Lipschitz continuous with constant $K_1$ depending on the constant $C_1$ of the energy estimate \eqref{ee}.
\end{remark}
The Fr\'echet derivative of the control-to-state operator $\mathcal S$ on $\mathcal U$ is a linear approximation of this operator  at some point in $ \mathcal U,$ 	which will be given by a solution of a linearized version of \eqref{1.1}.
Let $u_{\widetilde U}$ be the unique strong solution of the system \eqref{1.1} corresponding to the data $ u_0 \in \mathbb H^2\cap\mathbb V$ and the control $\widetilde  U\in \mathcal U.$ The linearized  equation of  \eqref{1.1} at some point $\widetilde U\in\mathcal U$ with a  function  $ V\in \mathsf L^2(0,T;\mathbb H)$ is given by
\begin{eqnarray}\label{ls1}\textrm{(L-NSVD)}\left\{\begin{array}{lclcrr}
		\mathcal L_{\widetilde U}w+\nabla q = V  \ \ \mbox{in} \ \ \Omega_T\\[2mm]
		\nabla\cdot w=0  \ \ \ \mbox{in}  \ \ \Omega_T , \ \ \ \
		w(x,0)=0 \ \mbox{in} \ \Omega,
	\end{array}\right.
\end{eqnarray}
where $q$ stands for the linearized pressure, the linear operator
\begin{eqnarray} \label{LL1}
	\mathcal L_{\widetilde U}w:=	w_t-\mu \Delta w_t -\nu \Delta w
	+(w\cdot\nabla) u_{\widetilde  U}+(  u_{\widetilde  U}\cdot\nabla) w+\alpha w+\beta f^\prime(  u_{\widetilde  U})w
\end{eqnarray} and  $f^\prime(\cdot)$  of the function $f(u)=|u|^{r-1} u$ is given in \eqref{ld}.

Now, we discuss the well-posedness of this linearized system. The existence and uniqueness can be completed by using the Faedo-Galerkin approximation technique once we obtain the required an \emph{a priori} estimates.
Testing \eqref{ls1} by $w_t,$ applying Young's inequality and \eqref{2.1},  we obtain
\begin{eqnarray} \label{ls4}
	&&\lefteqn{\frac{1}{2}\frac{d}{dt}\left[\alpha\|w(t)\|^2_{\mathbb H}+\nu\|w(t)\|^2_{\mathbb V}\right]+\|w_t(t)\|^2_{\mathbb H} +\mu\|w_t(t)\|^2_{\mathbb V} }\nonumber\\
	& =&(V,w_t)  -\mathrm b(w,u_{\widetilde  U},w_t)-\mathrm b(u_{\widetilde  U},w,w_t)-\beta  (f^\prime(u_{\widetilde  U})w, w_t)\nonumber\\
	&\leq& \frac{1}{4\delta_1}\|V(t)\|^2_{\mathbb H} + \delta_1 \|w_t(t)\|^2_{\mathbb H} +C(\delta_2)\|u_{\widetilde  U}(t)\|^2_{\mathbb{V}} \|w(t)\|_{\mathbb{V}}^2\nonumber\\
	&&+\delta_2 \|w_t(t)\|_{\mathbb{V}}^2-\beta  (f^\prime(u_{\widetilde  U})w, w_t),
\end{eqnarray}
for some $\delta_1,\delta_2>0.$ Using \eqref{ld} for any $r\geq 3,$   the continuous embedding $\mathbb H^2 \hookrightarrow \mathbb L^\infty$ gives
\begin{eqnarray} \label{ls6}
	&&\lefteqn{\beta  \int_\Omega (f^\prime(u_{\widetilde  U})w) \cdot w_t  dx}\nonumber\\
	&=&\beta\int_\Omega \left((r-1) |u_{\widetilde  U}|^{r-3}(u_{\widetilde  U}\cdot w)(u_{\widetilde  U}\cdot w_t) +|u_{\widetilde  U}|^{r-1}(w\cdot w_t)\right) dx\nonumber\\
	&\leq& C\int_\Omega|u_{\widetilde  U}|^{r-1}|w||w_t|dx
	\leq C\|u_{\widetilde  U}(t)\|_{\mathbb L^\infty}^{r-1}\|w(t)\|_{\mathbb{H}} \|w_t(t)\|_{\mathbb{H}} \nonumber \\
	& \leq& C(\Omega,C_P,\delta_1)\|u_{\widetilde  U}(t)\|_{\mathbb H^2}^{2(r-1)}\|w(t)\|^2_{\mathbb{V}} +\delta_1 \|w_t(t)\|_{\mathbb{H}}^2,
\end{eqnarray}
where we also employed the Poincar\'e inequality in the last inequality. Moreover, 	for  any $2\leq r<3, |u_{\widetilde  U}|\neq 0,$ the  estimation \eqref{ls6} holds.
Choosing $\delta_1=1/4,\delta_2=\mu/2,$ substituting \eqref{ls6} into \eqref{ls4}, and applying Gronwall's inequality gives
\begin{eqnarray} \label{ls8}
	\Psi(t,w)&\leq& \|V\|^2_{\mathsf L^2(0,T;\mathbb{H})}\nonumber\\
	&&\times \exp\left(C(\Omega,C_P) T\left(\|u_{\widetilde  U}\|^2_{\mathsf L^\infty(0,T;\mathbb{V})}+\|u_{\widetilde  U}\|_{\mathsf L^\infty(0,T;\mathbb H^2)}^{2(r-1)}\right)\right)<+\infty, \ \
\end{eqnarray}
for all $t\in (0,T],$ since by Theorem \ref{sst}, $u_{\widetilde  U}\in \mathsf{L}^\infty(0,T;\mathbb H^2),$ where
$$
\Psi(t,w):=\alpha\|w(t)\|^2_{\mathbb H}+\nu\|w(t)\|^2_{\mathbb V}+\|w_s\|^2_{\mathsf L^2(0,t;\mathbb H)}+\mu\|w_s\|^2_{\mathsf L^2(0,t;\mathbb V)}.
$$
From \eqref{ls8}, we infer that $w \in \mathsf L^2(0,T;\mathbb V), \ w_t\in \mathsf L^2(0,T;\mathbb V),$ whence $w \in \mathsf H^1(0,T;\mathbb V).$ Since the embedding  $\mathsf H^1(0,T;\mathbb V) \hookrightarrow \mathsf C([0,T];\mathbb V)$ is continuous, we can verify the initial condition. The uniqueness of the linear system \eqref{ls1} easily follows from \eqref{ls8}.
Thus, we have proved:
\begin{theorem}[Weak Solutions  of Linearized System] \label{WSLS}
	Let $\widetilde  U \in \mathcal U$ be any control and  $ u_{\widetilde  U}$ be the corresponding unique strong solution of \eqref{1.1}. Then for any $V\in \mathsf L^2(0,T;\mathbb H),$ there exists a unique weak solution of the linearized system \eqref{ls1} such that $w \in \mathsf L^\infty(0,T;\mathbb V), \ w_t\in \mathsf L^2(0,T;\mathbb V),$ and as a consequence $w\in \mathsf C([0,T];\mathbb V).$
\end{theorem}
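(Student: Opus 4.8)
The plan is to construct the solution by the Faedo--Galerkin method, exploiting the fact that, in contrast to \eqref{1.1}, the linearized system \eqref{ls1} is \emph{linear} in the unknown $w$: every coupling term carries the fixed coefficient $u_{\widetilde U}$, which by Theorem \ref{sst} enjoys the regularity $u_{\widetilde U}\in\mathsf C([0,T];\mathbb H^2)\cap\mathsf L^\infty(0,T;\mathbb H^2)$. First I would apply the projection $\mathbb P$ to eliminate the pressure $q$ (note $\mathbb P V=V$ since $V\in\mathsf L^2(0,T;\mathbb H)$) and recast \eqref{ls1}--\eqref{LL1} as
\begin{equation*}
\frac{d}{dt}\bigl(w+\mu \mathsf A w\bigr)+\nu \mathsf A w+\mathbb P\bigl[(w\cdot\nabla)u_{\widetilde U}+(u_{\widetilde U}\cdot\nabla)w\bigr]+\alpha w+\beta\,\mathbb P\bigl(f^\prime(u_{\widetilde U})w\bigr)=V,
\end{equation*}
with $w(0)=0$. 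Taking $\{e_k\}$ to be the orthonormal basis of $\mathbb H$ of Stokes eigenfunctions ($\mathsf A e_k=\lambda_k e_k$, smooth and $\Omega$-periodic) and seeking $w_n=\sum_{k=1}^n c_k^n(t)e_k$ solving the projection of the above onto $\mathrm{span}\{e_1,\dots,e_n\}$ yields a \emph{linear} ODE system for $c^n$. Its mass matrix is the diagonal, positive-definite $\mathrm{diag}(1+\mu\lambda_j)$, hence invertible; the $u_{\widetilde U}$-dependent coefficients are continuous in $t$ and the forcing lies in $\mathsf L^2(0,T)$, so Carath\'eodory's theorem gives a local absolutely continuous solution, which the a priori bound below prolongs to all of $[0,T]$.

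Next I would establish the $n$-independent estimate, which is exactly the computation \eqref{ls4}--\eqref{ls8} carried out at the Galerkin level: testing the projected equation with $w_{nt}$ (admissible, since $w_{nt}$ lies in the span), using the continuity \eqref{2.1} of the trilinear form to absorb $\mathrm b(w_n,u_{\widetilde U},w_{nt})$ and $\mathrm b(u_{\widetilde U},w_n,w_{nt})$, and bounding the damping contribution $\beta(f^\prime(u_{\widetilde U})w_n,w_{nt})$ by \eqref{ls6}. After fixing $\delta_1=1/4,\delta_2=\mu/2$ and invoking Gronwall's inequality, \eqref{ls8} delivers the uniform bound
\[
\|w_n\|_{\mathsf L^\infty(0,T;\mathbb V)}+\|w_{nt}\|_{\mathsf L^2(0,T;\mathbb V)}\le C\,\|V\|_{\mathsf L^2(0,T;\mathbb H)} .
\]
By Banach--Alaoglu I extract a subsequence with $w_n\overset{w^\ast}{\rightharpoonup}w$ in $\mathsf L^\infty(0,T;\mathbb V)$ and $w_{nt}\overset{w}{\rightharpoonup}w_t$ in $\mathsf L^2(0,T;\mathbb V)$. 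Because every term is linear in $w_n$, the limit passage is immediate: against a fixed test function one integrates the convective terms by parts using $\nabla\cdot u_{\widetilde U}=0$ (so that $\mathrm b(u_{\widetilde U},w_n,v)=-\mathrm b(u_{\widetilde U},v,w_n)$), whence only weak convergence of $w_n$ tested against coefficients built from the fixed $u_{\widetilde U}$ is required, and the damping term passes likewise since $f^\prime(u_{\widetilde U})\in\mathsf L^\infty(\Omega_T)$. Thus $w$ satisfies the abstract equation in $\mathbb V^\prime$ for a.e.\ $t$; the regularity $w\in\mathsf H^1(0,T;\mathbb V)\hookrightarrow\mathsf C([0,T];\mathbb V)$ gives meaning to and verifies $w(0)=0$. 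Uniqueness requires no new work: the difference of two solutions with the same $V$ solves \eqref{ls1} with zero data, so \eqref{ls8} forces $\Psi(t,w)\equiv 0$, i.e.\ $w\equiv 0$.

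The hard part is the damping-derivative term $\beta(f^\prime(u_{\widetilde U})w,w_t)$. Controlling it for general $r$ is precisely where the $\mathbb H^2$-regularity of the state is indispensable: the bound \eqref{ls6} uses $\mathbb H^2\hookrightarrow\mathbb L^\infty$ together with $u_{\widetilde U}\in\mathsf L^\infty(0,T;\mathbb H^2)$ to dominate $\|u_{\widetilde U}\|_{\mathbb L^\infty}^{r-1}$ for arbitrary $r\ge 2$ (the cases $2\le r<3$ with $u_{\widetilde U}\neq 0$ and $r=1$ being treated as indicated after \eqref{ls6}), which is exactly why the theorem is anchored to the strong solution of Theorem \ref{sst} in the periodic setting. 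Once \eqref{ls6}, and hence \eqref{ls8}, is secured, the linearity of \eqref{ls1} renders the Galerkin construction, limit passage, and uniqueness entirely routine.
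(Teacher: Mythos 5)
Your proposal is correct and follows essentially the same route as the paper: the paper's proof consists precisely of the a priori estimate obtained by testing with $w_t$, bounding the convective terms via \eqref{2.1} and the damping term via \eqref{ls6} (using $u_{\widetilde U}\in\mathsf L^\infty(0,T;\mathbb H^2)\hookrightarrow\mathsf L^\infty(0,T;\mathbb L^\infty)$), followed by Gronwall to reach \eqref{ls8}, with existence delegated to the standard Faedo--Galerkin scheme and uniqueness read off from the same estimate. The only difference is that you spell out the Galerkin construction and limit passage that the paper leaves implicit.
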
	
Now, we prove the Fr\'echet differentiability and Lipschitz continuity of the Fr\'echet derivative of the control-to-state operator $S$ on the open subset $\mathcal U.$ It is worth noting that these two results are proved for any open subset $\mathcal U$ of $\mathsf L^2(0,T;\mathbb H)$ rather than $\mathcal U_{ad}$ itself since the Fr\'echet derivative is merely defined for open subsets of $\mathsf L^2(0,T;\mathbb H).$
\begin{proposition} \label{FDS} For any $\widetilde  U\in \mathcal U, $ let $u_{\widetilde  U}$ be the unique strong solution of \eqref{1.1}. Then the following two conclusions hold:
	\begin{enumerate}	
		\item[(i)] 	The control-to-state  mapping $\mathcal S$ is Fr\'echet differentiable on $\mathcal U,$ that is, for any $\widetilde  U\in \mathcal U, $  there exists a bounded linear operator $\mathcal S^\prime(\widetilde  U): \mathsf L^2(0,T;\mathbb H)\to  \mathcal Z_1$  such that $$\frac{\|\mathcal S(\widetilde  U+U)-\mathcal S(\widetilde  U)-\mathcal S ^\prime (\widetilde  U)U\|_{\mathcal Z_1}}{\|U\|_{\mathsf L^2(0,T;\mathbb H) }} \to 0 \ \ \ \ \mbox{as} \ \ \ \|U\|_{\mathsf L^2(0,T;\mathbb H) } \to 0.$$ Moreover, for any $\widetilde  U\in \mathcal U,$ the Fr\'echet derivative $\mathcal S^\prime(\widetilde  U)$ is given by $\mathcal S ^\prime (\widetilde  U)U=w_{\widetilde U}^\prime[U], \ \forall U \in \mathsf L^2(0,T;\mathbb H),$ where $w_{\widetilde U}^\prime[U]$ is the unique weak solution of  \eqref{ls1}  associated with the control $U\in \mathsf L^2(0,T;\mathbb H).$
		\item[(ii)] The Fr\'echet derivative $\mathcal S^\prime$  is Lipschitz continuous, that is, for any controls $U_1, U_2\in\mathcal U$ and  $U\in \mathsf L^2(0,T;\mathbb H),$ there exists a constant $K_2>0$ depending only on system parameters,  $\Omega, T,C_P,R$ and $\|u_0\|_{\mathbb H^2}$ such that
		\begin{eqnarray*}\label{LCC1}
			\|\mathcal S^\prime(U_1)U-\mathcal S^\prime(U_2)U\|_{\mathcal Z_1} \leq K_2 \|U_1-U_2\|_{\mathsf L^2(0,T;\mathbb H)}\|U\|_{\mathsf L^2(0,T;\mathbb H)}.
		\end{eqnarray*}
	\end{enumerate}
\end{proposition}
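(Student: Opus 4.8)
The plan is to reduce both assertions to the linearized a priori estimate \eqref{ls8}, applied to a suitable \emph{remainder} (resp. \emph{difference}) equation whose forcing I then show to be of the right order in the perturbation. For part (i), fix $\widetilde U\in\mathcal U$ and, using that $\mathcal U$ is open, take $U$ with $\|U\|_{\mathsf L^2(0,T;\mathbb H)}$ small enough that $\widetilde U+U\in\mathcal U$. Write $u_1:=\mathcal S(\widetilde U)$, $u_2:=\mathcal S(\widetilde U+U)$, $y:=u_2-u_1$, and let $w:=w_{\widetilde U}^\prime[U]$ solve \eqref{ls1}; set $\rho:=y-w$, so $\rho(0)=0$. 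First I would subtract the two copies of \eqref{1.1} and the linearized equation \eqref{ls1}. Expanding the convection difference as $(y\cdot\nabla)u_2+(u_1\cdot\nabla)y$ and the damping difference via Taylor's formula $f(u_2)-f(u_1)=\int_0^1 f^\prime(u_1+sy)y\,ds$, the cancellation with the linearized terms leaves the equation
\[
\rho_t-\mu\Delta\rho_t-\nu\Delta\rho+(\rho\cdot\nabla)u_2+(u_1\cdot\nabla)\rho+\alpha\rho+\beta f^\prime(u_1)\rho+\nabla(\cdots)=\mathcal R,
\]
with remainder forcing
\[
\mathcal R:=-(w\cdot\nabla)y-\beta\int_0^1\big(f^\prime(u_1+sy)-f^\prime(u_1)\big)y\,ds,
\]
which is \emph{quadratic} in the perturbation.

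The left-hand side is exactly of the linearized type of Theorem \ref{WSLS}, with coefficients bounded in $\mathsf L^\infty(0,T;\mathbb H^2)$ by Theorem \ref{sst}; hence testing with $\rho_t$ and repeating the computation leading to \eqref{ls8} gives $\|\rho\|_{\mathcal Z_1}^2\lesssim\|\mathcal R\|_{\mathsf L^2(0,T;\mathbb H)}^2$, up to the Gronwall factor. It then remains to estimate $\mathcal R$. For the convective part I would \emph{not} bound $(w\cdot\nabla)y$ directly; inside the pairing with $\rho_t$ I would use the skew-symmetry \eqref{2.2} to write $\mathrm b(w,y,\rho_t)=-\mathrm b(w,\rho_t,y)$ and then \eqref{2.1}, absorbing $\|\rho_t\|_{\mathbb V}$ into the Voigt dissipation $\mu\|\rho_t\|_{\mathbb V}^2$; this leaves $\int_0^T\|w\|_{\mathbb V}^2\|y\|_{\mathbb V}^2\,dt\le\|w\|_{\mathsf L^\infty(0,T;\mathbb V)}^2\|y\|_{\mathsf L^2(0,T;\mathbb V)}^2$, which is $O(\|U\|^4)$ by \eqref{ls8} for $w$ and Lemma \ref{LCS} for $y$. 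For the damping part I would invoke the calculus inequality $|f^\prime(a)-f^\prime(b)|\le C(|a|^{r-2}+|b|^{r-2})|a-b|$ (valid precisely for $r\ge2$) together with $\mathbb H^2\hookrightarrow\mathbb L^\infty$ and $\mathbb V\hookrightarrow\mathbb L^4$, dominating the integrand in $\mathbb H$ by $C(\|u_1\|_{\mathbb H^2}^{r-2}+\|y\|_{\mathbb H^2}^{r-2})\|y\|_{\mathbb V}^2$; the uniform $\mathcal Z$-bound \eqref{ps1} on $u_1,y$ and Lemma \ref{LCS} again make this $O(\|U\|^4)$. Thus $\|\rho\|_{\mathcal Z_1}=O(\|U\|^2)=o(\|U\|)$, proving Fréchet differentiability with $\mathcal S^\prime(\widetilde U)U=w_{\widetilde U}^\prime[U]$; linearity follows from uniqueness for \eqref{ls1}, and boundedness $\mathsf L^2(0,T;\mathbb H)\to\mathcal Z_1$ is \eqref{ls8} itself.

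For part (ii), I would set $w_i:=\mathcal S^\prime(U_i)U$ (the solution of \eqref{ls1} linearized at $u_i:=\mathcal S(U_i)$ with the common forcing $U$) and study $W:=w_1-w_2$. Subtracting the two linearized equations and regrouping, $W$ solves \eqref{ls1} linearized at $u_1$, with zero initial data and forcing
\[
\mathcal G:=-(w_2\cdot\nabla)(u_1-u_2)-\big((u_1-u_2)\cdot\nabla\big)w_2-\beta\big(f^\prime(u_1)-f^\prime(u_2)\big)w_2.
\]
Applying the same estimate \eqref{ls8} reduces the claim to bounding $\mathcal G$. The two convective terms are treated by the \emph{same} Voigt device: writing $\mathrm b(w_2,u_1-u_2,W_t)=-\mathrm b(w_2,W_t,u_1-u_2)$ and using \eqref{2.1} produces, after absorbing $\mu\|W_t\|_{\mathbb V}^2$, the quantity $\int_0^T\|w_2\|_{\mathbb V}^2\|u_1-u_2\|_{\mathbb V}^2\,dt\le\|w_2\|_{\mathsf L^\infty(0,T;\mathbb V)}^2\,\|u_1-u_2\|_{\mathsf L^2(0,T;\mathbb V)}^2$, which is $O(\|U\|^2\|U_1-U_2\|^2)$ by \eqref{ls8} and Theorem \ref{cdt}; the damping term is controlled by the Lipschitz estimate for $f^\prime$ exactly as above, of the same order. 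Taking square roots yields $\|W\|_{\mathcal Z_1}\le K_2\|U_1-U_2\|\,\|U\|$.

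The hard part is the damping remainder. Everything rests on $|f^\prime(a)-f^\prime(b)|\le C(|a|^{r-2}+|b|^{r-2})|a-b|$, which fails for $1<r<3$... more precisely it fails once $f^{\prime\prime}$ becomes singular for $1<r<2$, and this is exactly where the hypothesis $r\ge2$ enters (with $r=1$ linear and trivial). The second, structural, point to stress is that the convective forcing is tamed only through the Kelvin--Voigt regularization: it is the dissipation $\mu\|\rho_t\|_{\mathbb V}^2$ (resp.\ $\mu\|W_t\|_{\mathbb V}^2$) that permits moving the spatial derivative onto the time-derivative test function and thereby extracting the sharp quadratic (resp.\ bilinear) dependence on $\|U\|$ and $\|U_1-U_2\|$; without it one would recover only a Hölder-$\tfrac12$ rate, insufficient for the asserted Lipschitz continuity.
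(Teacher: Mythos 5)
Your proposal is correct and follows essentially the same route as the paper: form the remainder (resp.\ difference) equation, identify its left-hand side with the linearized operator of Theorem \ref{WSLS}, and feed the quadratic (resp.\ bilinear) forcing into the energy estimate \eqref{ls8}, using the strong-solution bounds of Proposition \ref{P1} and the Lipschitz continuity of $\mathcal S$. The only departures are cosmetic --- your convective remainder $-(w\cdot\nabla)y$ versus the paper's $-(\widehat u\cdot\nabla)\widehat u$, your absorption of the trilinear term into the Voigt dissipation (which the paper itself uses for $r=2$ and in Section \ref{FOBD}), and your uniform first-order-Taylor/Lipschitz treatment of $f'$ for all $r\ge 2$ where the paper invokes a second-order Taylor expansion with $f''$ for $r>2$ and a separate argument for $r=2$ --- and each variant yields the required $o(\|U\|_{\mathsf L^2(0,T;\mathbb H)})$ and bilinear bounds.
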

\begin{proof}[Proof of (i)] For any arbitrary but fixed $\widetilde  U\in \mathcal U,$ let $u_{\widetilde  U}$ be the unique strong solution of \eqref{1.1}. Since $\mathcal U$ is an open subset of $\mathsf L^2(0,T;\mathbb{ H}),$ there exists some $\rho>0$ such that for any $ U\in \mathsf L^2(0,T;\mathbb{ H})$  with $\|U\|_{\mathsf L^2(0,T;\mathbb{ H})} \leq \rho,$ we have $\widetilde U+  U\in \mathcal U.$ Let  $u_{\widetilde  U+U}$ be the unique strong solution of the system \eqref{1.1} in response to the control $\widetilde U+  U\in \mathcal U.$ Let $w_{\widetilde U}^\prime[U]$ be the unique weak solution of the linearized equation \eqref{ls1}.
	Then the difference defined by $z:=	u_{\widetilde  U+U}-u_{\widetilde  U}-w_{\widetilde U}^\prime[U]$ solves the system
	\begin{eqnarray}\label{pls1}\left\{\begin{array}{lllr}
			\mathcal L_{\widetilde U}z+ \nabla \widetilde p
			= \mathfrak U_1+\mathfrak U_2  \ \ \mbox{in} \ \ \Omega_T\\[2mm]
			\nabla\cdot z = 0 \ \  \ \mbox{in} \ \ \ \Omega_T, \ \ \ \
			z(x,0) =0 \ \ \ \mbox{in} \ \Omega,
		\end{array}\right.
	\end{eqnarray}
	where $\mathcal L_{\widetilde U}z$ is defined in \eqref{LL1}, the pressure $\widetilde  p:= p_{\widetilde  U+ U}- p_{\widetilde  U}	-  q,$  and the   terms
	\begin{eqnarray*}
		\mathfrak U_1(x,t)&:=&-\big[(u_{\widetilde  U+ U}\cdot\nabla)u_{\widetilde  U+ U}-(u_{\widetilde  U}\cdot\nabla)u_{\widetilde  U}-(u_{\widetilde  U}\cdot\nabla)\widehat u-(\widehat u\cdot\nabla)u_{\widetilde  U}\big],\\
		\mathfrak U_2(x,t)&:=&-\big[\beta f(u_{\widetilde  U+ U})-\beta f(u_{\widetilde  U})-\beta f^\prime(u_{\widetilde  U})\widehat u\big], \ \ \ \widehat u:=u_{\widetilde  U+U}-u_{\widetilde  U}.
	\end{eqnarray*}	
	
	Let us invoke Theorem \ref{WSLS} for the sovability of \eqref{pls1}, which requires to prove that $\mathfrak U_1,\mathfrak U_2 \in \mathsf L^2(0,T;\mathbb H).$  Note that $\mathfrak U_1$ can be simplified as $\mathfrak U_1=-(\widehat u\cdot \nabla )\widehat u.$ Taking  $u_{\widetilde  U}, u_{\widetilde  U+ U} \in \mathsf L^\infty(0,T;\mathbb H^2)$ into account, the inequality \eqref{2.r22} and the continuous embedding $\mathsf H^1(0,T; \mathbb V)\hookrightarrow \mathsf L^\infty(0,T; \mathbb V)$ lead to the estimate
	\begin{eqnarray} \label{U1}
		\|\mathfrak U_1\|^2_{\mathsf L^2(0,T;\mathbb H)} &\leq& CT \|\widehat u\|_{\mathsf L^\infty(0,T;\mathbb H^2)}\|\widehat u\|^3_{\mathsf L^\infty(0,T;\mathbb V)}\nonumber\\
		&\leq& CT[M_0\big(\widetilde  U+ U,\widetilde U\big)] \|\widehat u\|^3_{\mathsf L^\infty(0,T;\mathbb V)}
		\leq C(C_2,K_1) \|U\|^3_{\mathsf L^2(0,T;\mathbb H) }, \quad
	\end{eqnarray}
	where we used \eqref{ps1}, \eqref{LCC}, and for any $V,W\in \mathcal U:$   $$[M_0\big(V,W\big)]^p:=\|u_{V}\|^p_{\mathsf L^\infty(0,T;\mathbb H^2)}+\|u_{W}\|^p_{\mathsf L^\infty(0,T;\mathbb H^2)},p\geq 1.$$
	Let's apply Taylor's formula,
	\begin{eqnarray}\label{SOTF}
		f(u_{\widetilde  U+ U})=f(u_{\widetilde  U})+f^\prime(u_{\widetilde  U}) \widehat u+\int_0^1(1-\theta)f^{\prime\prime}(u_{\widetilde  U}+\theta \widehat u)[\widehat u, \widehat u]d\theta.
	\end{eqnarray}	 
	By computing $f^{\prime\prime}(\cdot)$ from \eqref{ld}\footnote{The second derivative of $f(\cdot)$ is given by
		\begin{eqnarray}\label{U1x}
			f^{\prime\prime}(p)[q,g]&=&(r-1)(r-3)|p|^{r-5}(p\cdot q)(p\cdot g)p\nonumber\\
			&&+(r-1)|p|^{r-3}\big((p\cdot q)g+(p\cdot g)q+(g\cdot q)p\big),  \ \ \ r\geq 5.
		\end{eqnarray}
		Further, for any $2<r<5, p\neq 0,$ the expression \eqref{U1x} is valid for $f^{\prime\prime}(p)[\cdot,\cdot]$, and also need to set that $f^{\prime\prime}(p)[\cdot,\cdot]=0,$ if $p=0.$}, for any $r\geq 5,$ we have
	\begin{eqnarray*}
		f^{\prime\prime}(u_{\widetilde  U}+\theta \widehat u)
		[\widehat u, \widehat u]
		&=&  (r-1)(r-3)  |u_{\widetilde  U}+\theta \widehat u|^{r-5}|(u_{\widetilde  U}+\theta \widehat u)\cdot \widehat u|^2(u_{\widetilde  U}+\theta \widehat u)\\
		&&+ (r-1)|u_{\widetilde  U}+\theta \widehat u|^{r-3}\left(2\left((u_{\widetilde  U}+\theta \widehat u)\cdot\widehat u\right)\widehat u+(u_{\widetilde  U}+\theta \widehat u)|\widehat u|^2\right).\nonumber
	\end{eqnarray*}	
	Using the embeddings  $\mathbb H^2 \hookrightarrow \mathbb L^\infty, \mathbb V \hookrightarrow \mathbb L^4, $  and \eqref{LCC},  we obtain
	\begin{eqnarray} \label{U2}
		\|\mathfrak U_2\|^2_{\mathsf L^2(0,T;\mathbb H)}&\leq& C \sup_{\theta\in(0,1)} \int_{\Omega_{T}}\int_0^1|\theta u_{\widetilde  U+U}+(1-\theta)u_{\widetilde{U}}|^{2(r-2)}|\widehat u|^4 d\theta dxdt \nonumber\\	
		&\leq& C 2^{2r-5} \int_{\Omega_{T}}\left(|u_{\widetilde  U+U}|^{2(r-2)}+|u_{\widetilde{U}}|^{2(r-2)}\right)|\widehat u|^4 dxdt	\nonumber\\
		&\leq&	C2^{2r-5} \int_0^T\left( \|u_{\widetilde  U+U}(t)\|^{2(r-2)}_{\mathbb L^\infty}  +\|u_{\widetilde{U}}(t)\|^{2(r-2)}_{\mathbb L^\infty} \right) \|\widehat u(t)\|_{\mathbb L^4}^4dt  \nonumber\\
		&\leq& C T [M_0\big(\widetilde  U+ U,\widetilde U\big)]^{2(r-2)}\|\widehat u\|_{\mathsf L^\infty(0,T;\mathbb V)}^4 \nonumber \\
		&\leq& C(C_2,K_1) \|U\|^4_{\mathsf L^2(0,T;\mathbb H) }.
	\end{eqnarray} Further, for any $2<r<5, |u_{\widetilde  U}+\theta \widehat u|\neq 0,$ we can obtain the bound  \eqref{U2}.
	
	From the estimates \eqref{U1} and \eqref{U2}, we notice that $\mathfrak U_1,\mathfrak U_2 \in \mathsf L^2(0,T;\mathbb H).$ Thus, for any $r\in (2,\infty),$ by Theorem \ref{WSLS}, the system \eqref{pls1} has a unique weak solution $z \in \mathsf L^\infty(0,T;\mathbb V), \ z_t\in \mathsf L^2(0,T;\mathbb V).$
	Moreover, repeating the estimations similar to \eqref{ls8} together with \eqref{U1} and \eqref{U2} yield that
	\begin{eqnarray} \label{ls81}
		\Psi(t,z)	&\leq& 2\left(\|\mathfrak U_1\|^2_{\mathsf L^2(0,T;\mathbb{H})}+\|\mathfrak U_2\|^2_{\mathsf L^2(0,T;\mathbb{H})}\right) \nonumber\\
		&&\times \exp\left(C(\Omega,C_P) T\left(\|  u_{\widetilde  U}\|^2_{\mathsf L^\infty(0,T;\mathbb{V})}
		+\| u_{\widetilde  U}\|_{\mathsf L^\infty(0,T;\mathbb H^2)}^{2(r-1)}\right)\right)  \nonumber \\
		&\leq& C(C_2,K_1,C_P,T) \left(\|U\|^3_{\mathsf L^2(0,T;\mathbb H)}+\|U\|^4_{\mathsf L^2(0,T;\mathbb H)}\right).
	\end{eqnarray}
	For any $r\in (2,\infty),$ from   \eqref{ls81},  we deduce the following convergence:
	$$\|z\|_{\mathcal Z_1} \leq C \left(\|U\|^{3/2}_{\mathsf L^2(0,T;\mathbb H)}+\|U\|^2_{\mathsf L^2(0,T;\mathbb H)}\right),$$ whence $\|z\|_{\mathcal Z_1}/\|U\|_{\mathsf L^2(0,T;\mathbb H)}\to 0 \  \mbox{as}  \ \|U\|_{\mathsf L^2(0,T;\mathbb H)}\to 0.$
	
	Next, for  the case of $r=2,$   we follow the proof of Theorem \ref{WSLS}.  Let us multiply \eqref{pls1} by $z_t.$ For the term $\mathfrak U_1,$ we get from \eqref{2.1}  that
	\begin{eqnarray}\label{er12}
		(\mathfrak U_1,z_t)=-\mathrm b(\widehat{u},\widehat u,z_t)  \leq C\|\widehat u(t)\|^4_{\mathbb{V}} +\delta_2\|z_t(t)\|_{\mathbb{V}}^2.	
	\end{eqnarray}
	By the first-order Taylor's formula, we get  $ \mathfrak U_2 =- \beta\int_0^1\left(f^{\prime}(u_{\widetilde  U}+\theta \widehat u)\widehat u- f^\prime(u_{\widetilde  U})\widehat u\right)d\theta.$
	For $r=2,$  the first derivative expression \eqref{ld} clearly leads to the following estimate
	{\small\begin{eqnarray} \label{U21}
			(\mathfrak U_2,z_t) &=& -\beta\int_{\Omega}\int_0^1\left[\left(\frac{(u_{\widetilde  U}+\theta \widehat u)\cdot\widehat u}{|(u_{\widetilde  U}+\theta \widehat u)|}(\theta\widehat u)+\frac{(u_{\widetilde  U}+\theta \widehat u)\cdot \widehat u }{|(u_{\widetilde  U}+\theta \widehat u)||u_{\widetilde{U}}|}u_{\widetilde{U}}\left(|u_{\widetilde{U}}|-|u_{\widetilde  U}+\theta \widehat u|\right)
			\nonumber\right.\right.\\
			&&\left.\left.+\frac{(\theta\widehat u)\cdot \widehat u}{|u_{\widetilde U}|}u_{\widetilde U}\right)\cdot z_t  + \left(|u_{\widetilde  U}+\theta \widehat u|- |u_{\widetilde{U}}|\right)(\widehat u\cdot z_t) \right] d\theta dx	\nonumber \\
			&\leq& 4\beta\sup_{\theta\in(0,1)} \int_{\Omega}\int_0^1 |\theta\widehat{u}||\widehat{u}||z_t|d\theta dx
			\leq  C(\delta_1) \|\widehat u(t)\|_{\mathbb{L}^4}^4+\delta_1 \|z_t(t)\|_{\mathbb{H}}^2.
	\end{eqnarray}}By proceeding as in the proof of Theorem \ref{WSLS},  choosing $\delta_1=1/4,\delta_2=\mu/4,$ we obtain from \eqref{er12}, \eqref{U21} in view of \eqref{ls8} and \eqref{LCC}  that
	\begin{eqnarray*} \label{ls81-2}
		\Psi(t,z) \leq C(C_2,K_1,C_P,T) \|U\|^4_{\mathsf L^2(0,T;\mathbb H)}.
	\end{eqnarray*}
	Hence, for 	$r=2,$ we get the convergence as in $r\in(2,\infty),$ which completes the proof of (i) for all $r\in[2,\infty).$ \\\medskip
	\noindent{\emph {Proof of (ii).}} Let us denote $w_1:=w^\prime_{ U_1}[U], w_2:=w^\prime_{ U_2}[U],$ where $w^\prime_{ U_1}[U]$ and $w^\prime_{ U_2}[U]$ are the weak solutions of \eqref{ls1} with control $U.$ Then the function $w:=w_1-w_2$ satisfies the equation
	\begin{eqnarray}\label{ls11}\left\{\begin{array}{lclcrr}
			\mathcal L_{U_1}w+\nabla q =\mathfrak U_3+\mathfrak U_4  \ \mbox{in} \ \Omega_T\\[2mm]
			\nabla\cdot w=0  \ \ \ \mbox{in}  \ \ \Omega_T , \ \ \ \
			w(x,0)=0 \ \mbox{in} \ \Omega,
		\end{array}\right.
	\end{eqnarray}
	where  $q:=q_{U_1}-q_{U_2},$
	$\mathfrak U_3:= -(w_2\cdot\nabla)\widetilde u- (\widetilde u\cdot \nabla )w_2,$
	$\mathfrak U_4:=	-\beta\big(f^\prime(u_{U_1})-f^\prime(u_{U_2})\big)w_2,$ and  $\widetilde u:=u_{U_1}-u_{U_2}.$
	
	The proof again follows the lines of proof of Theorem \ref{WSLS}.  We only look at the right-hand side terms of \eqref{ls11}.  Testing \eqref{ls11} by $w_t$  and  applying \eqref{2.1} we get
	\begin{eqnarray*}\label{lcfd}
		( \mathfrak U_3,w_t) =-\mathrm b(w_2,\widetilde u,w_t)-\mathrm b(\widetilde u,w_2,w_t)  \leq C(\delta_2)\|w_2(t)\|^2_{\mathbb{V}} \|\widetilde u(t)\|_{\mathbb{V}}^2+\delta_2 \|w_t(t)\|_{\mathbb{V}}^2.	
	\end{eqnarray*}
	
	For $r\geq 5,$ invoking \eqref{U1x}, Taylor's formula and applying the embeddings $\mathbb H^2 \hookrightarrow \mathbb L^\infty, \mathbb V \hookrightarrow \mathbb L^4,$ we obtain
	\begin{eqnarray}\label{lcfd1}
		&&\lefteqn{	|(\mathfrak U_4,w_t)|}\nonumber\\ &\leq& C(\delta_1) \left\|\int_0^1 f^{\prime\prime}(u_{U_2}(t)+\theta \widetilde u(t))[\widetilde u(t),w_2(t)]d\theta\right\|^2_{\mathbb H} +\delta_1\|w_t(t)\|_{\mathbb H}^2\nonumber	\\
		&\leq& C\int_\Omega\int_0^1|\theta u_{U_1}(t)+(1-\theta)  u_{U_2}(t)|^{2(r-2)}|w_2(t)|^2|\widetilde u(t)|^2d\theta dx+\delta_1\|w_t(t)\|_{\mathbb H}^2\nonumber\\
		&\leq& C\left( \|u_{U_1}(t)\|^{2(r-2)}_{\mathbb L^\infty}  +\|u_{U_2}(t)\|^{2(r-2)}_{\mathbb L^\infty} \right)\|w_2(t)\|^2_{\mathbb L^4}\|\widetilde u(t)\|_{\mathbb L^4}^2
		+\delta_1\|w_t(t)\|_{\mathbb H}^2 \nonumber\\
		&\leq& C[M_0(U_1,U_2)]^{2(r-2)}\|w_2(t)\|^2_{\mathbb V}\|\widetilde u(t)\|_{\mathbb V}^2+\delta_1\|w_t(t)\|_{\mathbb H}^2,
	\end{eqnarray}
	where $M_0$ is given in  \eqref{U1}. Moreover, for any $2<r<5, (u_{U_2}+\theta \widetilde u)\neq 0,$ we can obtain the estimate  \eqref{lcfd1}. 	Thus, for any $r\in (2,\infty),$ by  repeating the calculations similar to \eqref{ls4}-\eqref{ls8}, choosing $\delta_1=1/4,\delta_2=\mu/4,$ and using \eqref{LCC}, one can get that
	\begin{eqnarray} \label{lcfd3}
		\Psi(t,w)	&\leq& C\exp\left(C(\Omega,C_P) T\left(\|u_{ U_1}\|^2_{\mathsf L^\infty(0,T;\mathbb{V})}+\|u_{ U_1}\|_{\mathsf L^\infty(0,T;\mathbb H^2)}^{2(r-1)}\right)\right)\nonumber\\
		&&\times\left(1+[M_0(U_1,U_2)]^{2(r-2)}\right)\|\widetilde u\|_{\mathsf L^\infty(0,T;\mathbb V)}^2\|w_2\|^2_{\mathsf L^2(0,T;\mathbb V)}\nonumber \\
		&\leq& C(C_2,K_1,C_P,T)\|U\|^2_{\mathsf L^2(0,T;\mathbb H)}\|U_1-U_2\|^2_{\mathsf L^2(0,T;\mathbb H)},\ \ \forall t\in (0,T],
	\end{eqnarray}
	where we used \eqref{ls8} for $\|w_2\|^2_{\mathsf L^2(0,T;\mathbb V)}.$ Besides,  for the case of $r=2,$ we can get the bound \eqref{lcfd3}, since  by invoking  \eqref{U21}, we have
	\begin{eqnarray} \label{U2ex1}
		(\mathfrak U_4,w_t) \leq 4\beta  \|w_2(t)\|_{\mathbb{L}^4}\|\widetilde u(t)\|_{\mathbb{L}^4}\|w_t(t)\|_{\mathbb{H}} \leq C \|w_2(t)\|_{\mathbb{V}}^2\|\widetilde u(t)\|_{\mathbb{V}}^2+\delta_1\|w_t(t)\|^2_{\mathbb{H}}.
	\end{eqnarray} Thus, for any $r\in[2,\infty),$ there exists a constant $K_2(C_2,K_1)>0$  such that
	$\|w\|_{\mathcal Z_1}  \leq K_2\|U\|_{\mathsf L^2(0,T;\mathbb H)}\|U_1-U_2\|_{\mathsf L^2(0,T;\mathbb H)}.$
	The proof of (ii) is thus completed.
\end{proof}	
\subsection{First-order optimality conditions}	
In this subsection, we derive  optimality conditions satisfied by an optimal control. From  Theorem \ref{EOC}, it is evident that there exists an optimal solution $(u_{\widetilde  U},\widetilde  U)$ satisfying $\mathcal S(\widetilde  U)=u_{\widetilde  U}$ and  the pair $(\mathcal S(\widetilde  U), \widetilde  U)$ is an optimal solution for (MOCP).

By the Fr\'echet differentiability of $\mathcal S$ given by Proposition \ref{FDS},  the reduced cost functional $\mathfrak J(U)$ is Fr\'echet differentiable at every $ U\in\mathcal U.$
Moreover, since the admissible control set $\mathcal U_{ad}$ is convex and $\mathfrak J(U)$ is Fr\'echet differentiable, for any  minimizer $\widetilde  U\in \mathcal U_{ad}$ of the reduced functional $\mathfrak J(U),$ the  following \emph{variational inequality} holds (see, \cite{Li, Tr}):
\begin{eqnarray} \label{Jp}
	\mathfrak J^\prime(\widetilde  U)(U-\widetilde  U)\geq 0, \  \ \forall U\in\mathcal U_{ad}.
\end{eqnarray}
Indeed, from Lemma \ref{LCS} and Proposition \ref{FDS},  we obtain a variational inequality satisfied by an optimal control $\widetilde  U\in \mathcal U_{ad}.$ Let $ U\in\mathcal U$ be arbitrary but fixed. Then there exists some $\rho>0$ such that for any $ V\in \mathsf L^2(0,T;\mathbb{ H})$  with $\|V\|_{\mathsf L^2(0,T;\mathbb{ H})} \leq \rho,$ we have $ U+  V\in \mathcal U,$ so that the variation of the functional  	$\mathfrak J(\cdot)$ is given by
\begin{eqnarray}\label{VFxx}
	\mathfrak J(U+ V)  -	\mathfrak J(U) &=& \mathcal J(\mathcal S(U+V),U+V)  -	\mathcal J(\mathcal S(U),U) \nonumber\\
	&=& \frac{\kappa}{2} \int_0^T \|\nabla (u_{U+ V}(t)-u_{U}(t))\|_{\mathbb H}^2 dt \nonumber \\
	&&+  \kappa\int_0^T \big(\nabla (u_{U+V}(t)-u_{U}(t)), \nabla (u_{U}(t)-u_{d}(t))\big) dt \nonumber \\
	&& +\frac{\lambda}{2} \int_0^T \|V(t)\|_{\mathbb H} ^2 dt +\lambda\int_0^T (U(t),V(t))dt.
\end{eqnarray}
It is clear from Lemma \ref{LCS}	that the following estimate holds:
\begin{eqnarray}\label{C1}
	\int_0^T \|\nabla (u_{U+V}(t)-u_{U}(t))\|_{\mathbb H}^2 dt =  \|u_{U+V}-u_{U}\|_{\mathsf L^2(0,T;\mathbb V)}^2
	\leq  K_1^2\|V\|^2_{\mathsf L^2(0,T;\mathbb H)}.
\end{eqnarray}
Let $\tilde w:=w^\prime_{U}[V]$ be the unique weak solution of \eqref{ls1}.  Notice  that
\begin{eqnarray}\label{C2}
	&&\lefteqn{\int_0^T \big(\nabla (u_{U+V}(t)-u_{U}(t)), \nabla (u_{U}(t)-u_{d}(t))\big) dt\nonumber}\\
	&=&\int_0^T \big(\nabla \tilde w(t), \nabla (u_{U}(t)-u_{d}(t)\big) dt\nonumber\\
	&&+\int_0^T \big(\nabla \big(u_{U+V}(t)-u_{U}(t)-\tilde w(t)\big), \nabla (u_{U}(t)-u_{d}(t))\big) dt :=I_1+I_2.
\end{eqnarray} By the application of H\"older's inequality, we get
\begin{eqnarray}\label{C2xx}
	|I_2|\leq  \|u_{U+V}-u_{U}-\tilde w\|_{\mathsf L^2(0,T;\mathbb V)}\|u_{U}-u_d\|_{\mathsf L^2(0,T;\mathbb V)}.
\end{eqnarray}
Taking $u_{U},u_d\in \mathsf L^2(0,T;\mathbb V)$ into account,   invoking Proposition \ref{FDS}, we see that $|I_2|/\|V\|_{\mathsf L^2(0,T;\mathbb H)}\to 0$ as $\|V\|_{\mathsf L^2(0,T;\mathbb H)}\to 0.$

Substituting the identity \eqref{C2} into \eqref{VFxx},  rearranging the resultant integrals, dividing both sides  by $\|V\|_{\mathsf L^2(0,T;\mathbb H)}$ and taking $\|V\|_{\mathsf L^2(0,T;\mathbb H)}\to 0,$ we obtain through \eqref{C1} and \eqref{C2xx}  that
\begin{eqnarray} \label{GD1}
	\mathfrak J^\prime(U)V = \kappa\int_0^T\left(\nabla\tilde w(t),\nabla(u_{U}(t)-u_d(t))\right)dt +\lambda\int_0^T (U(t),V(t))dt.
\end{eqnarray} 	
Thus, from \eqref{Jp}, we have the following optimality inequality characterizing an  optimal control $\widetilde  U\in\mathcal U_{ad}$ of (MOCP):
\begin{theorem}\label{foc1} Let $\Omega$ be a periodic domain in $\mathbb R^3.$
	Suppose Assumption \ref{assc} holds true. Let $U\in\mathcal U_{ad}$ be an arbitrary control with state $ u_{U}=\mathcal S(U),$ then the reduced functional $\mathfrak J(U)$ is Fr\'echet differentiable  with the derivative \eqref{GD1}. If $\widetilde  U\in\mathcal U_{ad}$ is an optimal control for (MOCP) with associated state $u_{\widetilde  U}=\mathcal S(\widetilde  U),$ then the following variational inequality holds:
	\begin{eqnarray} \label{VIC}
		\mathfrak J^\prime(\widetilde  U)(U-\widetilde  U)&=&	\kappa\int_{\Omega_T} \nabla w^\prime_{\widetilde U}[U-\widetilde  U]\cdot \nabla(u_{\widetilde  U}-u_d)dxdt \nonumber\\
		&&+\lambda\int_{\Omega_T} \widetilde  U\cdot (U-\widetilde  U)dxdt\geq 0,
	\end{eqnarray}
	for all $U\in\mathcal U_{ad},$ where $w^\prime_{\widetilde U}[U-\widetilde  U]=\mathcal S^\prime(\widetilde  U)(U-\widetilde  U)\in\mathcal Z_1$ is a unique weak solution of the linearized system \eqref{ls1} with control $V=U-\widetilde  U.$	
\end{theorem}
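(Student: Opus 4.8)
The plan is to assemble the ingredients already established --- Fr\'echet differentiability of the control-to-state map $\mathcal S$ (Proposition \ref{FDS}), its Lipschitz continuity (Lemma \ref{LCS}), and the convexity of $\mathcal U_{ad}$ --- and to conclude in two stages: first the differentiability formula \eqref{GD1}, then the variational inequality \eqref{VIC}. For the first stage, since $\mathfrak J(U) = \mathcal J(\mathcal S(U), U)$ with $\mathcal J$ a sum of quadratic (hence smooth) functionals and $\mathcal S$ Fr\'echet differentiable from $\mathcal U$ into $\mathcal Z_1$, the chain rule already gives differentiability of $\mathfrak J$; to pin down the derivative I would expand the increment $\mathfrak J(U+V) - \mathfrak J(U)$ as in \eqref{VFxx}, separating the quadratic enstrophy term, a cross term, and the control-energy contributions. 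The quadratic enstrophy term is $O(\|V\|^2)$ by \eqref{C1} and the quadratic control term $\frac{\lambda}{2}\int_0^T \|V\|_{\mathbb H}^2\,dt$ is likewise $O(\|V\|^2)$; for the cross term I would insert $\pm\tilde w$ (with $\tilde w = w^\prime_U[V]$ the linearized solution) to write it as $I_1 + I_2$ as in \eqref{C2}. The remainder $I_2$ is bounded by \eqref{C2xx} and vanishes at order $o(\|V\|)$ thanks to the convergence $\|u_{U+V} - u_U - \tilde w\|_{\mathcal Z_1}/\|V\|_{\mathsf L^2(0,T;\mathbb H)} \to 0$ supplied by Proposition \ref{FDS}(i), together with $u_U, u_d \in \mathsf L^2(0,T;\mathbb V)$. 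Dividing by $\|V\|_{\mathsf L^2(0,T;\mathbb H)}$ and letting it tend to zero isolates the linear part and produces \eqref{GD1}.

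For the second stage, I would use convexity of $\mathcal U_{ad}$: given any $U \in \mathcal U_{ad}$ and $t \in (0,1]$, the convex combination $\widetilde U + t(U - \widetilde U)$ lies in $\mathcal U_{ad}$, so optimality of $\widetilde U$ forces $\mathfrak J(\widetilde U + t(U - \widetilde U)) - \mathfrak J(\widetilde U) \geq 0$; dividing by $t$ and passing $t \to 0^+$ via Fr\'echet differentiability yields \eqref{Jp}, that is, $\mathfrak J^\prime(\widetilde U)(U - \widetilde U) \geq 0$ for all $U \in \mathcal U_{ad}$. It then remains to specialize the derivative formula \eqref{GD1} to $U = \widetilde U$ with admissible direction $V = U - \widetilde U$, identifying $w^\prime_{\widetilde U}[U - \widetilde U] = \mathcal S^\prime(\widetilde U)(U - \widetilde U) \in \mathcal Z_1$, which recasts \eqref{Jp} as the explicit inequality \eqref{VIC}.

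The main difficulty is conceptual rather than computational and lies entirely upstream: because the enstrophy is measured at the $\mathbb V$-level (recall $\|\mathrm{curl}\, u\|_{\mathbb H} = \|u\|_{\mathbb V}$), the linearization remainder must be shown to vanish at order $o(\|V\|)$ in the $\mathsf L^2(0,T;\mathbb V)$ norm rather than merely in $\mathbb H$. This is exactly why $\mathcal S$ had to be proved differentiable with values in $\mathcal Z_1 = \mathsf H^1(0,T;\mathbb V)$, and why the strong-solution regularity of Theorem \ref{sst} --- available here because $\Omega$ is periodic --- is indispensable. Once Proposition \ref{FDS} is granted, this delicate point is resolved, and the work remaining in the theorem reduces to the routine assembly sketched above.
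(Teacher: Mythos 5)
Your proposal is correct and follows essentially the same route as the paper: the derivative formula \eqref{GD1} is obtained by expanding the increment as in \eqref{VFxx}, controlling the quadratic terms via \eqref{C1} and the remainder $I_2$ via \eqref{C2xx} together with Proposition \ref{FDS}(i), and the variational inequality then follows from convexity of $\mathcal U_{ad}$ exactly as in \eqref{Jp}. The only cosmetic difference is that you spell out the standard convex-combination argument for \eqref{Jp}, where the paper simply cites the classical references.
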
	
Next, we follow the classical adjoint problem approach to simplify the variational inequality \eqref{VIC}, in particular the first term, by expressing it as an equivalent integral defined in terms of a solution of an \emph{adjoint problem} of \eqref{1.1}. This will lead to devising a compact first-order optimality condition characterizing an optimal control of (MOCP). The adjoint system is derived by applying the formal Lagrangian method (see, \cite{Tr}, Chapter 3).

\noindent Consider the adjoint problem
\begin{eqnarray}\label{as}
	\textrm{(A-NSVD)}\left\{\begin{array}{lll}
		\mathcal E_{U}\varphi+\nabla \psi = -\kappa\Delta ( u_{U}-u_d) 	 \ \ \ \mbox{in} \ \ \ \Omega_0\\ [2mm]
		\nabla\cdot\varphi =0 \ \ \ \mbox{in} \ \ \  \Omega_0 ,  \ \ \  \varphi(x,T)-\mu\Delta \varphi(x,T) =0  \ \ \ \mbox{in} \ \ \ \Omega,
	\end{array}\right.
\end{eqnarray}
where $\Omega_0:=\Omega\times [0,T),$ $\psi$ denotes the adjoint pressure, the linear operator $$\mathcal E_{U}\varphi:=-\varphi_t +\mu\Delta \varphi_t-\nu\Delta \varphi + (\nabla  u_{U})^T\varphi-( u_{  U}\cdot\nabla)\varphi +\alpha\varphi +\beta f^\prime( u_{U})\varphi,$$ and $
f^\prime(\cdot)$ is defined in \eqref{ld}

Next, we define the weak solution for the adjoint system \eqref{as}.
\begin{definition} \label{was}
	Let $ U\in\mathcal U_{ad}$ be any control  with associated state $u_{U}=\mathcal S(U)$ and $r\geq 2.$  A function $\varphi\in \mathsf H^1(0,T;\mathbb V)$ is called a weak solution of \eqref{as} on the interval $[0,T]$ if the following hold:
	\begin{align*}
		&(i) \ \ -(\varphi_t,v) -\mu (\nabla \varphi_t , \nabla v) +\nu  (\nabla \varphi,\nabla v)
		+\mathrm b(v,u_{U},\varphi)-\mathrm b(u_{U},\varphi,v)+\alpha (\varphi,v)\\
		&\hspace{1in}+\beta (f^\prime(u_U)\varphi,v)
		=\kappa(\nabla(u_{U}-u_d),\nabla v), \  \ \forall \ v\in \mathbb{V}, \ a.e. \ t\in[0,T],  \medskip \\
		&(ii) \ \ (\varphi(T),v)+\mu(\nabla \varphi(T),\nabla v) =0, \ \ \forall v\in \mathbb V.
	\end{align*}
\end{definition}
\begin{theorem}[First-Order Optimality Conditions] \label{foc2} Let $\Omega$ be a periodic domain in $\mathbb R^3.$
	Suppose Assumption \ref{assc} holds true. Let $\widetilde  U\in\mathcal U_{ad}$ be an optimal control for (MOCP)  with associated state $u_{\widetilde  U}=\mathcal S(\widetilde  U).$ Then there exits an adjoint state $\varphi_{\widetilde U}$ associated to the state $u_{\widetilde  U}$   such that
	\begin{itemize}
		\item [(i)] $\varphi_{\widetilde{U}}$ is a unique weak solution of  \eqref{as} in the sense of Definition \ref{was}.
		\item [(ii)] for any admissible control $U\in\mathcal U_{ad},$   the following variational inequality holds
	\end{itemize}
	\begin{eqnarray} \label{VIC1}
		\mathfrak J^\prime(\widetilde  U)(U-\widetilde  U)=\int_{\Omega_T} (\varphi_{\widetilde U}+\lambda\widetilde  U)\cdot (U-\widetilde  U)dxdt\geq 0.
	\end{eqnarray}	
\end{theorem}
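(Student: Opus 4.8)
The plan is to prove (i) by a backward-in-time well-posedness argument and (ii) by a duality pairing between the linearized and adjoint systems, after which the variational inequality \eqref{VIC} already furnished by Theorem \ref{foc1} collapses to \eqref{VIC1}. For part (i), I would first reverse time by setting $\tau=T-t$ and $\phi(\tau)=\varphi_{\widetilde U}(T-\tau)$, which turns the terminal-value problem \eqref{as} into a forward initial-value problem whose principal part $\phi_\tau-\mu\Delta\phi_\tau-\nu\Delta\phi$ and initial condition $\phi(0)-\mu\Delta\phi(0)=0$ match the structure handled in Theorem \ref{WSLS}. The zeroth-order term $\beta f^\prime(u_{\widetilde U})$ acts as a bounded multiplier, since $u_{\widetilde U}\in\mathsf L^\infty(0,T;\mathbb H^2)$ and $\mathbb H^2\hookrightarrow\mathbb L^\infty$ give the pointwise bound $\|f^\prime(u_{\widetilde U})\|_{\mathbb L^\infty}\leq C\|u_{\widetilde U}\|_{\mathbb H^2}^{r-1}$, while the transport pair $(\nabla u_{\widetilde U})^T\phi-(u_{\widetilde U}\cdot\nabla)\phi$ is controlled exactly as in \eqref{ls4}--\eqref{ls6} by testing with $\phi_\tau$ and invoking \eqref{2.1}. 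A Faedo--Galerkin scheme with the Gronwall estimate of the form \eqref{ls8} then yields a unique $\phi\in\mathsf H^1(0,T;\mathbb V)$, hence a unique weak solution $\varphi_{\widetilde U}\in\mathsf H^1(0,T;\mathbb V)\hookrightarrow\mathsf C([0,T];\mathbb V)$ of \eqref{as} in the sense of Definition \ref{was}; the continuity in time makes both the terminal condition (ii) and the test identity (i) meaningful.

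For part (ii), the central computation is to test the weak form of the linearized system \eqref{ls1} for $w:=w^\prime_{\widetilde U}[U-\widetilde U]$ (with $V=U-\widetilde U$) against the adjoint state $\varphi_{\widetilde U}$, to test the adjoint identity of Definition \ref{was}(i) against $w$, and to subtract. In the subtraction the viscous terms $\nu(\nabla w,\nabla\varphi_{\widetilde U})$, the linear damping $\alpha(w,\varphi_{\widetilde U})$, and the term $\mathrm b(w,u_{\widetilde U},\varphi_{\widetilde U})$ cancel identically because they occur with the same sign in both identities. The remaining transport contribution $\mathrm b(u_{\widetilde U},w,\varphi_{\widetilde U})+\mathrm b(u_{\widetilde U},\varphi_{\widetilde U},w)$ vanishes by the antisymmetry \eqref{2.2}, and the nonlinear damping contribution $\beta(f^\prime(u_{\widetilde U})w,\varphi_{\widetilde U})-\beta(f^\prime(u_{\widetilde U})\varphi_{\widetilde U},w)$ vanishes because the Jacobian $f^\prime(z)$ read off from \eqref{ld} is, pointwise, a symmetric matrix for every $z$ and every admissible $r$. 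What survives is the pointwise-in-time identity
\begin{eqnarray*}
\frac{d}{dt}\Big[(w,\varphi_{\widetilde U})+\mu(\nabla w,\nabla\varphi_{\widetilde U})\Big] &=& (V,\varphi_{\widetilde U})-\kappa\big(\nabla(u_{\widetilde U}-u_d),\nabla w\big).
\end{eqnarray*}

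Since $w,\varphi_{\widetilde U}\in\mathsf H^1(0,T;\mathbb V)$, the bracketed quantity is absolutely continuous, so integration over $(0,T)$ is justified. The endpoint at $t=0$ drops out because $w(0)=0$, and the endpoint at $t=T$ drops out upon choosing $v=w(T)\in\mathbb V$ in the terminal condition Definition \ref{was}(ii), which forces $(w(T),\varphi_{\widetilde U}(T))+\mu(\nabla w(T),\nabla\varphi_{\widetilde U}(T))=0$. This leaves
\begin{eqnarray*}
\kappa\int_{\Omega_T}\nabla w^\prime_{\widetilde U}[U-\widetilde U]\cdot\nabla(u_{\widetilde U}-u_d)\,dxdt &=& \int_{\Omega_T}\varphi_{\widetilde U}\cdot(U-\widetilde U)\,dxdt,
\end{eqnarray*}
and substituting this identity for the first term of \eqref{VIC} gives precisely \eqref{VIC1}.

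The main obstacle I anticipate is not the duality bookkeeping but the rigorous well-posedness of the adjoint system in part (i): one must verify that the time-reversed problem genuinely falls within the scope of the estimates behind Theorem \ref{WSLS}, with particular care for the adjoint transport term $(\nabla u_{\widetilde U})^T\varphi$, which carries no divergence-free structure of its own, and for the borderline ranges $2\le r<3$ and $z=0$ in \eqref{ld}, where $f^\prime$ must be treated through the conventions already fixed there. A secondary technical point is ensuring that every pairing in the subtraction step is an honest $\mathbb H$-pairing, in particular that $\mathrm b(w,u_{\widetilde U},\varphi_{\widetilde U})$ is finite for $w,\varphi_{\widetilde U}\in\mathbb V$ and $u_{\widetilde U}\in\mathbb H^2$, which follows from the regular-coefficient estimate \eqref{2.r1}, so that the formal cancellations are legitimate.
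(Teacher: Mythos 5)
Your proposal is correct and follows essentially the same route as the paper: the a priori estimates for the adjoint system (energy estimate plus testing against $\varphi_t$, with the damping term controlled via $\mathbb H^2\hookrightarrow\mathbb L^\infty$) and the duality cancellation between the linearized and adjoint weak forms, using the antisymmetry \eqref{2.2} and the pointwise symmetry of $f^\prime(z)$, reproduce exactly the identity \eqref{as6} that the paper substitutes into \eqref{VIC}. The one point to watch in part (i) is that the adjoint forcing $-\kappa\Delta(u_{\widetilde U}-u_d)$ lies only in $\mathsf L^2(0,T;\mathbb V^\prime)$ since $u_d\in\mathsf L^2(0,T;\mathbb V)$, so the time-reversed problem is not literally an instance of Theorem \ref{WSLS} (whose forcing is in $\mathsf L^2(0,T;\mathbb H)$); one must integrate by parts and absorb $\kappa(\nabla(u_{\widetilde U}-u_d),\nabla\varphi_t)$ into the good term $\mu\|\varphi_t\|^2_{\mathbb V}$, exactly as the paper does in \eqref{rhs} and \eqref{as4}.
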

\begin{proof}
	We will  only prove required an \emph{a priori} estimates for the solvability of the adjoint system \eqref{as}.  The justification of weak solution can be done by the standard Galerkin approximations and convergence arguments.  By setting $\varphi:=\varphi_{\widetilde U}$ and taking inner product of \eqref{as} with $\varphi,$ we get
	\begin{eqnarray}\label{as1}
		&&	-\frac{1}{2} \frac{d}{dt}\left[\|\varphi(t)\|^2_{\mathbb{H}}+\mu\|\varphi(t)\|^2_{\mathbb{V}}\right] +\nu\|\varphi(t)\|^2_{\mathbb{V}}+\alpha \|\varphi(t)\|^2_{\mathbb{H}}+\beta( f^\prime(u_{\widetilde  U})\varphi, \varphi )\nonumber\\
		&= &-((\nabla u_{\widetilde  U})^{T}\varphi,\varphi)-\kappa\langle \Delta (u_{\widetilde  U}-u_d),\varphi\rangle,
	\end{eqnarray}	
	where we used the fact that $\int_\Omega(u_{\widetilde  U}\cdot \nabla)\varphi\cdot\varphi dx=\mathrm b(u_{\widetilde  U},\varphi,\varphi)=0.$ Using H\"older's inequality   and Ladyzhenskaya's inequality \eqref{la1}  followed by Young's inequality, we obtain
	\begin{eqnarray}\label{rhs1}
		&&\lefteqn{-((\nabla u_{\widetilde  U})^{T}\varphi,\varphi)=-((\varphi\cdot\nabla)u_{\widetilde  U}, \varphi)
			\leq \|\varphi(t)\|_{\mathbb{L}^4}^2 \|\nabla u_{\widetilde  U}(t)\|_{\mathbb{L}^2}}\\
		&\leq & C \|\varphi(t)\|_{\mathbb{H}}^{1/2} \|\varphi(t)\|_{\mathbb{V}}^{3/2} \|u_{\widetilde  U}(t)\|_{\mathbb{V}} 	
		\leq C(\delta_3)\|\varphi(t)\|_{\mathbb{H}}^2 \|u_{\widetilde  U}(t)\|_{\mathbb{V}}^4+\delta_3 \|\varphi(t)\|_{\mathbb{V}}^2.\nonumber
	\end{eqnarray}
	Integrating 	by parts and applying Young's inequality, we also have
	\begin{eqnarray}\label{rhs}
		-\kappa\langle \Delta (u_{\widetilde  U}-u_d),\varphi\rangle &=&\kappa(\nabla (u_{\widetilde  U}-u_d),\nabla\varphi) \nonumber\\
		&\leq&  C(\delta_3)\left(\|u_{\widetilde  U}(t)\|^2_{\mathbb{V}}+\|u_d(t)\|^2_{\mathbb V}\right) +\delta_3 \|\varphi(t)\|^2_{\mathbb V}.	
	\end{eqnarray} For any $r\geq 3,$ notice from  	the derivative \eqref{ld}  that
	\begin{eqnarray*}\label{nni}
		\beta  \int_\Omega (f^\prime(u_{\widetilde U}) \varphi) \cdot \varphi  dx = \beta\int_\Omega \left((r-1) |u_{\widetilde U}|^{r-3}|u_{\widetilde U}\cdot \varphi|^2 +|u_{\widetilde U} |^{r-1}|\varphi|^2\right) dx\geq 0.
	\end{eqnarray*}
	Similarly, this integral is non-negative for $2\leq r<3$ as well.
	Let use define energy integral
	\begin{eqnarray*}\label{EI1}
		\Phi(t,\varphi):=\|\varphi(t)\|^2_{\mathbb{H}}+\mu\|\varphi(t)\|^2_{\mathbb{V}} +\alpha \|\varphi\|^2_{\mathsf L^2(t,T;\mathbb{H})}+\nu\|\varphi\|^2_{\mathsf L^2(t,T;\mathbb{V})}, \ t\in[0,T).	
	\end{eqnarray*}
	Using \eqref{rhs1}-\eqref{rhs} in \eqref{as1}, choosing $\delta_3=\nu/4$ and applying Gronwall's inequality in $(t,T),$ we obtain
	\begin{eqnarray}\label{as2}
		\Phi(t,\varphi) \leq C \left(\|u_{\widetilde  U}\|^2_{\mathsf L^2(0,T;\mathbb{V})}+\|u_d\|^2_{\mathsf L^2(0,T;\mathbb V)}\right)
		\exp\left(C\|u_{\widetilde  U}\|_{\mathsf L^4(0,T;\mathbb{V})}^4\right)<+\infty,
	\end{eqnarray}
	for all $t\in[0,T),$  since $u_{\widetilde  U}\in \mathsf L^\infty(0,T;\mathbb V)$ and $u_d\in \mathsf L^2(0,T;\mathbb V),$ where the terminal condition at $t=T$ vanishes due to the following relation:
	\begin{eqnarray*}
		\|\varphi(T)\|^2_{\mathbb{H}}+\mu\|\varphi(T)\|^2_{\mathbb{V}} =
		(\varphi(T),\varphi(T))+\mu(\nabla\varphi(T),\nabla\varphi(T)) =0.
	\end{eqnarray*}  Now, taking inner product of \eqref{as} with $-\varphi_t$ and using \eqref{2.1},  we infer from \eqref{rhs} that
	\begin{eqnarray*} \label{as3}
		&&\lefteqn{-\frac{1}{2}\frac{d}{dt}\left[\alpha\|\varphi(t)\|^2_{\mathbb{H}}+\nu\|\varphi(t)\|^2_{\mathbb{V}}\right] +\|\varphi_t(t)\|^2_{\mathbb{H}}+\mu\|\varphi_t(t)\|^2_{\mathbb{V}}}\nonumber\\
		&=&((\nabla u_{\widetilde  U})^T\varphi,\varphi_t )-((u_{\widetilde  U}\cdot \nabla)\varphi, \varphi_t)+\kappa\langle \Delta(u_{\widetilde  U}-u_d),\varphi_t\rangle+\beta  (f^\prime(u_{\widetilde  U}) \varphi,\varphi_t)\\
		&\leq& \delta_4 \|\varphi_t(t)\|_{\mathbb V}^2+C(\delta_4)  \|u_{\widetilde  U}(t)\|_{\mathbb V}^2\|\varphi(t)\|_{\mathbb V}^2\\
		&&+C(\delta_4)(\|u_{\widetilde  U}(t)\|^2_{\mathbb{V}}+\|u_d(t)\|^2_{\mathbb V})+\beta  (f^\prime(u_{\widetilde  U}) \varphi,\varphi_t).\nonumber
	\end{eqnarray*}
	By invoking  \eqref{ls6}, choosing $\delta_1=1/2,\delta_4=\mu/2$  and integrating over $(t,T],$ one may obtain
	\begin{eqnarray}\label{as4}
		\Phi_1(t,\varphi)&\leq &C(\Omega,C_P)\left(\|u_{\widetilde  U}\|^2_{\mathsf L^\infty(0,T;\mathbb{V})}+\|u_{\widetilde  U}\|_{\mathsf L^\infty(0,T;\mathbb H^2)}^{2(r-1)}\right)\|\varphi \|_{\mathsf L^2(0,T;\mathbb V)}^2 \\
		&&+ C\left(\|\varphi(T)\|^2_{\mathbb{H}}+\|\varphi(T)\|^2_{\mathbb{V}}+ \|u_{\widetilde  U}\|^2_{\mathsf L^2(0,T;\mathbb{V})}+\|u_d\|^2_{\mathsf L^2(0,T;\mathbb V)}\right)<+\infty, \nonumber \ \
	\end{eqnarray}
	for all $t\in[0,T),$ where
	$$\Phi_1(t,\varphi):=\alpha\|\varphi(t)\|^2_{\mathbb{H}}+\nu\|\varphi(t)\|^2_{\mathbb{V}} +\|\varphi_s\|^2_{\mathsf L^2(t,T;\mathbb{H})}+\mu\|\varphi_s\|^2_{\mathsf L^2(t,T;\mathbb{V})}  $$
	and note that $$\|\varphi(T)\|^2_{\mathbb{H}}+\|\varphi(T)\|^2_{\mathbb{V}} \leq \sup_{t\in[0,T]}\Big( \|\varphi(t)\|^2_{\mathbb{H}}+\|\varphi(t)\|^2_{\mathbb{V}}\Big)<+\infty.$$
	Here we employed the fact  that $u_{\widetilde  U}\in \mathsf L^\infty(0,T;\mathbb{H}^2),$ $u_d\in \mathsf L^2(0,T;\mathbb V)$ and  \eqref{as2}.  The estimate \eqref{as4} shows that $\varphi \in \mathsf L^2(0,T;\mathbb V), \ \varphi_t\in \mathsf L^2(0,T;\mathbb V)$ and it is enough to prove the existence of a weak solution of the adjoint system \eqref{as}. It also shows that $\varphi\in \mathsf C([0,T];\mathbb V)$ which is  sufficient to verify the  condition  $\varphi(x,T)-\mu\Delta \varphi(x,T) =0$ in the weak sense as in Definition \ref{was}-(ii). Moreover, the uniqueness  of weak solution of the linear system \eqref{as} directly follows from \eqref{as2}. This proves part (i).
	
	Next, let us express the  integral $\kappa\int_0^T\int_\Omega \nabla w^\prime_{\widetilde U}[U-\widetilde U]\cdot \nabla(u_{\widetilde  U}-u_d)dxdt$ of \eqref{VIC} in terms of the adjoint variable and control. For brevity, we use $\tilde w:=w^\prime_{\widetilde U}[U-\widetilde U]$ for a weak solution of \eqref{ls1}.   Choosing $V=U-\widetilde  U$ in \eqref{ls1}, testing  by the adjoint variable $\varphi$ and space integrating by parts, one may obtain
	\begin{eqnarray} \label{as5}
		\lefteqn{\int_{\Omega_T}\big(\varphi \cdot \tilde w_t+\mu\nabla\varphi\cdot \nabla \tilde w_t+\nu\nabla\varphi\cdot \nabla \tilde w\big)dxdt}\\
		&&\!\!+\!\!\int_{\Omega_T}\Big((\nabla   u_{\widetilde U})^T\varphi-(  u_{\widetilde U}\cdot \nabla)\varphi+\alpha \varphi+\beta f^\prime(  u_{\widetilde U})\varphi\Big)\cdot \tilde w  dxdt
		=  \int_{\Omega_T}\varphi\cdot (U-\widetilde  U)  dxdt,\!\!\nonumber
	\end{eqnarray}
	where we used $ \int_{\Omega_T} \nabla q\cdot \varphi dxdt=0,$ and \eqref{2.2} to get $\mathrm b( u_{\widetilde U}, \tilde w,\varphi) =-\mathrm b(  u_{\widetilde U},\varphi,\tilde w).$
	Integrating by parts with respect to time and using the initial/final data conditions of \eqref{ls1} and \eqref{as}, we  get
	$\int_{\Omega_T}\big(\varphi \cdot \tilde w_t+\mu\nabla\varphi\cdot \nabla \tilde w_t\big)dxdt=-\int_{\Omega_T}\big(\varphi_t \cdot \tilde w+\mu\nabla\varphi_t\cdot \nabla \tilde w\big)dxdt.$ On the other hand, testing the adjoint equation \eqref{as} with $\tilde w$ and comparing  with \eqref{as5} yields
	\begin{eqnarray} \label{as6}
		\kappa\int_{\Omega_T} \nabla\tilde w\cdot \nabla(u_{\widetilde  U}-u_d)dxdt= \int_{\Omega_T}\varphi\cdot (U-\widetilde  U) dxdt .
	\end{eqnarray}
	By invoking Theorem \ref{foc1}, we replace the first integral of \eqref{VIC}  by \eqref{as6} to complete the proof.
\end{proof}
The following corollary gives a pointwise version of the variational inequality \eqref{VIC1} which provides a useful characterization of an optimal control given by the regularization parameter and admissible control set $\mathcal U_{ad}.$ Since the admissible control set $\mathcal U_{ad}$ is a closed,  convex, and non-empty subset of $\mathsf L^2(0,T;\mathbb H),$ we can characterize the optimal control by a  \emph{projection formula}.

For any given $(a,b)\in\mathbb R^2$ with $a\leq b$ and $\tau\in \mathbb R,$ let $\mathcal P_{[a,b]}$ denote  the  projection of $\mathbb R$ onto $[a,b]$ which is defined by
$\mathcal P_{[a,b]}(\tau) :=\min\Big\{b,\max\big\{a,\tau\big\}\Big\}. $

\begin{corollary} Let $\widetilde  U\in \mathcal U_{ad}$ be an optimal control for (MOCP) and $\varphi=\varphi_{\widetilde U}$ be the solution of the adjoint equation \eqref{as}. Then the optimal control is characterized by three different cases:
	\begin{itemize}
		\item [$(i)$] If $\lambda>0,$	then $\widetilde  U$ is given by the projection formula
		\begin{eqnarray}\label{pr1}
			\widetilde  U(x,t)= \mathcal P_{[U_{\min},U_{\max}]}\left(-\frac{\varphi(x,t)}{\lambda}\right), \ \  \mbox{for a.e.} \ \ (x,t)\in \Omega_T,
		\end{eqnarray}
		where $\mathcal P$ is the projection of $\mathsf L^2(0,T;\mathbb H)$ onto $\mathcal U_{ad}$ as defined above.
		\item [$(ii)$] If $\lambda>0$ and $\mathcal U_{ad}=\mathsf L^2(0,T;\mathbb H),$ then the unconstrained optimal control is given by the direct relation
		$\widetilde  U(x,t)=-\frac{\varphi(x,t)}{\lambda}, \ \  \mbox{for a.e.} \ \ (x,t)\in \Omega_T.$
		\item [$(iii)$] If $\lambda=0$  and $\mathcal U_{ad} \subseteq \mathsf L^2(0,T;\mathbb H),$ then the control is of bang-bang type given by
		\begin{eqnarray*}
			\widetilde  U(x,t)=	\left\{
			\begin{array}{lclclc}
				U_{\min}  &\mbox{if}& \varphi(x,t)>0 \\
				U_{\max}  &\mbox{if}& \varphi(x,t)<0, \ \ \mbox{for a.e.} \ \  (x,t)\in \Omega_T.
			\end{array}
			\right.
		\end{eqnarray*}
	\end{itemize}	
\end{corollary}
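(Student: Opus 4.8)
The plan is to reduce the global variational inequality \eqref{VIC1} to an equivalent pointwise inequality and then read off the three formulas directly. First I would introduce the shorthand $g(x,t) := \varphi(x,t) + \lambda \widetilde U(x,t)$, so that Theorem \ref{foc2}(ii) reads $\int_{\Omega_T} g \cdot (U - \widetilde U)\, dx\, dt \geq 0$ for every $U \in \mathcal U_{ad}$. The central step is a localization argument: because the constraint in \eqref{accx} is imposed pointwise a.e., the admissible set is decomposable, and the integral inequality is equivalent to the pointwise variational inequality
\[
g(x,t) \cdot \big(v - \widetilde U(x,t)\big) \geq 0 \quad \text{for all } v \in [U_{\min}(x,t), U_{\max}(x,t)], \ \text{a.e. } (x,t) \in \Omega_T.
\]
One direction is trivial (integrate the pointwise inequality against $U - \widetilde U$); for the converse I would argue by contradiction, supposing the pointwise inequality fails on a set $E \subset \Omega_T$ of positive measure, and then construct an admissible competitor $U$ equal to $\widetilde U$ off $E$ and chosen on $E$ so as to force $g \cdot (U - \widetilde U) < 0$ there, contradicting \eqref{VIC1}. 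This is the standard Lebesgue-point / measurable-selection reduction.

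Given the pointwise inequality, the three cases follow by inspecting the sign of $g$. For part (i) with $\lambda > 0$, I would distinguish whether $\widetilde U(x,t)$ lies in the open interval $(U_{\min}, U_{\max})$ or on its boundary. If $\widetilde U$ is interior, admissible variations $v$ in \emph{both} directions force $g = 0$, i.e. $\widetilde U = -\varphi/\lambda$; if $\widetilde U = U_{\min}$ then only $v \geq \widetilde U$ is allowed, forcing $g \geq 0$, which rearranges to $-\varphi/\lambda \leq U_{\min}$; and symmetrically $\widetilde U = U_{\max}$ forces $-\varphi/\lambda \geq U_{\max}$. In every case this says precisely that $\widetilde U$ is the metric projection of $-\varphi/\lambda$ onto the interval $[U_{\min}, U_{\max}]$, which is the formula \eqref{pr1}. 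Part (ii) is the degenerate case with no effective box constraint ($U_{\min} \equiv -\infty$, $U_{\max} \equiv +\infty$): the pointwise inequality then holds for all real $v$, hence $g \equiv 0$ and $\widetilde U = -\varphi/\lambda$.

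For part (iii) with $\lambda = 0$ the pointwise inequality reduces to $\varphi(x,t)\,(v - \widetilde U(x,t)) \geq 0$ for all admissible $v$. On the set $\{\varphi > 0\}$ this forces $v - \widetilde U \geq 0$ for every $v \in [U_{\min}, U_{\max}]$, hence $\widetilde U = U_{\min}$; on $\{\varphi < 0\}$ it forces $v - \widetilde U \leq 0$, hence $\widetilde U = U_{\max}$, giving the bang-bang structure (on $\{\varphi = 0\}$ the inequality is vacuous and $\widetilde U$ is left undetermined, consistent with the statement). The main obstacle is the localization step: one must justify carefully, via a density and measurability argument, that testing against the full class $\mathcal U_{ad}$ is equivalent to the pointwise condition, paying attention to the fact that $\mathcal U_{ad}$ is only a subset of $\mathsf L^2(0,T;\mathbb H)$ and that the competitor functions must themselves be verified to be admissible. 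Once this reduction is in place, the case analysis above is entirely elementary.
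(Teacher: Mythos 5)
Your proposal is correct and follows essentially the same route as the paper: the paper also reduces the variational inequality \eqref{VIC1} to the pointwise inequality $(\varphi+\lambda\widetilde U)\cdot(v-\widetilde U)\geq 0$ for all $v\in[U_{\min},U_{\max}]$ a.e., citing Lemma 2.26 and Theorem 2.28 of Tr\"oltzsch for that equivalence, and then concludes by the definition of $\mathcal P_{[U_{\min},U_{\max}]}$. The only difference is that you sketch the localization/contradiction argument explicitly where the paper simply cites the reference; your case analysis is the same.
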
	
\begin{proof}
	The proof can be completed  by invoking (see, \cite{Tr}, Lemma 2.26 and Theorem 2.28) that the variational inequality \eqref{VIC1} is equivalent to the pointwise variational inequality
	$
	(\varphi+\lambda\widetilde  U)\cdot (U-\widetilde  U)\geq 0 \ \ \forall U\in [U_{\min},U_{\max}], \ \mbox{for a.e.} \ (x,t)\in \Omega_T,
	$
	and the definition of $\mathcal P_{[U_{\min},U_{\max}]}.$	
\end{proof}		

\subsection{First-order optimality conditions in bounded domain  $\Omega\subset \mathbb R^3$} \label{FOBD}

In the previous section, we established a first-order optimality condition of  (MOCP) in the periodic domain. As we noticed earlier, while proving the well-posedness of the linearized system \eqref{ls1} and the adjoint system \eqref{as}, we needed the strong solution of \eqref{1.1} in estimating \eqref{ls6},\eqref{U2},\eqref{lcfd1} and \eqref{as4}. Nevertheless, the strong solution is obtained when the domain $\Omega$ is periodic in $\mathbb R^3.$ Now, we restrict the growth of the damping term $f(u)=|u|^{r-1}u, r\in[2,\infty) $ appropriately so that the optimality conditions (Theorem \ref{foc2}) hold true for bounded domain $\Omega\subset \mathbb R^3.$  In other words, we prove Theorem \ref{foc2} by using only the weak solution of \eqref{1.1}, which exists for bounded domain as well.

We will only list out the restrictions needed on $r$ to get the crucial estimations on the bounded domain.
The proof of Theorem \ref{WSLS} requires only the following estimate on the damping term. If we restrict the growth of the damping term in \eqref{1.1} to $3\leq r\leq 5,$ the estimate \eqref{ls6} can be modified using the embedding $\mathbb V\hookrightarrow \mathbb{L}^{\frac{6(r+1)}{11-r}}$ and $\mathbb V\hookrightarrow \mathbb{L}^{r+1}, \ r\leq 5$ as follows:
\begin{eqnarray} \label{ls6n1}
	\beta  \left\langle f^\prime(  u_{\widetilde U}) w, w_t\right\rangle &\leq& \beta \|f^\prime(  u_{\widetilde U}) w\|_{\mathbb{L}^{6/5}}\|w_t(t)\|_{\mathbb{L}^6} \nonumber \\
	&\leq& C\|  u_{\widetilde U}(t)\|_{\mathbb L^{r+1}}^{r-1}\|w(t)\|_{\mathbb{L}^{\frac{6(r+1)}{11-r}}}\|w_t(t)\|_{\mathbb{L}^6}\nonumber\\
	&\leq& C\|  u_{\widetilde U}(t)\|_{\mathbb V}^{r-1}\|w(t)\|_{\mathbb V}  \|w_t(t)\|_{\mathbb{V}} \nonumber\\
	&\leq& C(\delta_2)\| u_{\widetilde U}(t)\|_{\mathbb V}^{2(r-1)}\|w(t)\|^2_{\mathbb V}+  \delta_2\|w_t(t)\|^2_{\mathbb{V}}.
\end{eqnarray}
Choosing $\delta_1=1/2,\delta_2=\mu/4,$ the estimate \eqref{ls8} can now read as follows	
\begin{eqnarray} \label{5.7xx}
	\Psi(t,w) &\leq&  \exp\left(C(\Omega,C_P) T\left(\|  u_{\widetilde U}\|^2_{\mathsf L^\infty(0,T;\mathbb{V})}+\|  u_{\widetilde U}\|^{2(r-1)}_{\mathsf L^\infty(0,T;\mathbb V)}\right)\right)\nonumber\\
	&&\times\|V\|^2_{\mathsf L^2(0,T;\mathbb H)}  <+\infty, \ \ \forall t\in (0,T],
\end{eqnarray}
since $  u_{\widetilde U}\in  \mathsf L^\infty(0,T;\mathbb{V})$ by the weak solutions of \eqref{1.1} given in Theorem \ref{eu}. The bound  given above holds true for $2\leq r<3$ as well.

Next, let us examine Proposition \ref{FDS}.
Taking inner product of  \eqref{pls1} with $z_t$ and estimate the right-hand side terms.  For $\langle \mathfrak U_1, z_t\rangle,$ we use \eqref{er12}
and for the term $\langle \mathfrak U_2, z_t\rangle,$ in the case of $ 2<r\leq 5,$ we alter \eqref{U2} using the embedding $\mathbb V\hookrightarrow \mathbb L^{\frac{12(r+1)}{17-r}},$ and $\mathbb V\hookrightarrow \mathbb{L}^{r+1}, \ r\leq 5$ as follows
\begin{eqnarray}\label{U22}
	&&\lefteqn{|\langle \mathfrak U_2, z_t\rangle |} \nonumber\\
	&\leq& C\sup_{\theta\in(0,1)}\left\|\int_0^1(1-\theta)f^{\prime\prime}(u_{\widetilde  U}+\theta \widehat u)[\widehat u, \widehat u]d\theta\right\|_{\mathbb L^{\frac{6}{5}}}  \|z_t(t)\|_{\mathbb L^6} \nonumber\\
	&\leq& C(r)\sup_{\theta\in(0,1)}\left[\int_\Omega\left(\int_0^1|\theta u_{\widetilde  U+U}+(1-\theta)u_{\widetilde{U}}|^{r-2}|\widehat u|^2d\theta\right)^{6/5}dx\right]^{5/6}  \|z_t(t)\|_{\mathbb L^6} \nonumber\\
	&\leq& C(r)\left[\int_\Omega\left(| u_{\widetilde  U+U}|+|u_{\widetilde{U}}|\right)^{r+1}dx\right]^{\frac{r-2}{r+1}}\|\widehat u(t)\|^{2}_{\mathbb L^{\frac{12(r+1)}{17-r}}}  \|z_t(t)\|_{\mathbb L^6} \nonumber\\
	&\leq&C(\delta_2,r)\left(\|u_{\widetilde  U+U}(t)\|^{2(r-2)}_{\mathbb V} +\|u_{\widetilde  U}(t)\|^{2(r-2)}_{\mathbb V}\right)\|\widehat u(t)\|^{4}_{\mathbb V}  +\delta_2 \|z_t(t)\|^2_{\mathbb V}.
\end{eqnarray}
By doing calculations similar to Theorem \ref{WSLS} (keeping in mind \eqref{ls6n1}),  using \eqref{er12} and \eqref{U22} with $\delta_2=\mu/8,$ one may obtain
\begin{eqnarray*} \label{ls811}
	\Psi(t,z)&\leq&  C\exp\left(CT\left(\| u_{\widetilde  U}\|^2_{\mathsf L^\infty(0,T;\mathbb{V})}
	+\|  u_{\widetilde  U}\|_{\mathsf L^\infty(0,T;\mathbb V)}^{2(r-1)}\right)\right)\|U\|^4_{\mathsf L^2(0,T;\mathbb H) } \\
	&\leq& C \|U\|^4_{\mathsf L^2(0,T;\mathbb H)},
\end{eqnarray*}	
where $C$ depends on $C_1,K_1$ and $R.$ By \eqref{U21}, the above conclusion also holds true for $r=2.$  Thus, we can prove part-(i) of Proposition \ref{FDS}. In view of \eqref{5.7xx}, the part-(ii) requires only to prove \eqref{lcfd1} and that can again be  obtained by following \eqref{U22}. This completes the proof of Proposition \ref{FDS} in the case of bounded domain when $2\leq r\leq 5.$ For the solvability of adjoint system \eqref{as}, the main estimation \eqref{as4}, which we concluded  from \eqref{ls6} can now be obtained from \eqref{ls6n1}. Thus, by combining the preceding arguments,   we infer that when the growth of the damping term is restricted to $2\leq r\leq 5,$ the first-order optimality condition of (MOCP) proved in Theorem \ref{foc2} holds true for bounded domain $\Omega\subset \mathbb R^3$ with zero Dirichlet boundary conditions.

\section{Second-order sufficient optimality conditions} \label{Se6}
It is known that for convex optimal control problems, any  control satisfying the first-order necessary optimality conditions is globally optimal. However, for the non-convex optimal control problems, one may need to do further higher derivative analysis to guarantee a local optimal control. In the case of optimal control problems governed by Navier-Stokes equations,  second-order sufficient optimality conditions play a pivotal role in the numerical analysis of these non-convex optimal control problems. A control that satisfies second-order sufficient optimality conditions is stable with respect to any perturbation of the given data (see, \cite{Tr1}, also \cite{Lij})
\subsection{Control-to-costate operator}
In this section, we establish a sufficient criteria  for local optimality condition. If $\widetilde U$ satisfies the variational ineqaulity \eqref{VIC1} and suppose, we assume that $\mathfrak J^{\prime\prime}(\widetilde U)[U,U]> 0,$ for all directions
$U\in \mathsf L^2(0,T;\mathbb H)\backslash \{0\},$ then the control $\widetilde U$ is a strict local minimizer of $\mathfrak J(\cdot)$ on the admissible control set $\mathcal U_{ad}.$   But the  positivity condition defined on all directions can be relaxed to only certain critical directions (see, \cite{Cas,Tr} and also \cite{Eb}).
To get the second Fr\'echet differentiability of the functional $\mathfrak J(\cdot),$ we study the \emph{control-to-costate} operator $\mathcal A :\mathcal U \to \mathcal Z_1,U\mapsto \mathcal A(U):=\varphi_U,$  which assigns for any control $ U\in\mathcal U,$ a unique weak solution $\varphi_U\in \mathcal Z_1$ of the adjoint system \eqref{as}. We prove that the operator $\mathcal A$ is Lipschitz continuous and Fr\'echet differentiable.  In this section, we assume that the growth parameter $r\geq 3$ in the nonlinearity of the damping  term. This restriction arises due to the fact that the third-order G\^ateaux derivative of the damping  term exists  only for $r\geq 3.$
\begin{lemma}\label{LCA} The control-to-costate mapping $\mathcal A :\mathcal U \to \mathcal Z_1$ is Lipschitz continuous, that is, there exists a constant $K_3>0$ depending only on   $K_1,\Omega, T,C_P,R,\|u_0\|_{\mathbb H^2}$  such that
	\begin{eqnarray}\label{LCC10}
		\|\mathcal A(U_1)-\mathcal A(U_2)\|_{\mathcal Z_1} \leq K_3 \|U_1-U_2\|_{\mathsf L^2(0,T;\mathbb H)}, \ \ \forall U_1,U_2\in\mathcal U.
	\end{eqnarray}
\end{lemma}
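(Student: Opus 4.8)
The plan is to derive a linear backward evolution equation for the difference of the two adjoint states and then run the same energy estimates used in the solvability part of Theorem \ref{foc2}, with the difference of the states entering as a forcing term that is ultimately absorbed via Lemma \ref{LCS}. Write $u_i:=u_{U_i}=\mathcal S(U_i)$, $\varphi_i:=\mathcal A(U_i)$, and set $\varphi:=\varphi_1-\varphi_2$, $\bar u:=u_1-u_2$. Subtracting the two copies of \eqref{as} and isolating the operator $\mathcal E_{U_1}$ acting on $\varphi$, one checks that $\varphi$ solves
\begin{equation*}
	\mathcal E_{U_1}\varphi+\nabla\psi=-\kappa\Delta\bar u-(\nabla\bar u)^T\varphi_2+(\bar u\cdot\nabla)\varphi_2-\beta\big(f^\prime(u_1)-f^\prime(u_2)\big)\varphi_2=:G,
\end{equation*}
together with $\nabla\cdot\varphi=0$ and the terminal condition $\varphi(T)-\mu\Delta\varphi(T)=0$, the latter forcing $\varphi(T)=0$ in the sense that $\|\varphi(T)\|^2_{\mathbb H}+\mu\|\varphi(T)\|^2_{\mathbb V}=0$.

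First I would test this difference equation by $\varphi$ and integrate over $(t,T)$. All contributions from $\mathcal E_{U_1}\varphi$ are treated exactly as in \eqref{as1}--\eqref{rhs1}: the pressure term and the convective term $\mathrm b(u_1,\varphi,\varphi)$ vanish, the damping term $\beta(f^\prime(u_1)\varphi,\varphi)\geq0$ is discarded as in Theorem \ref{foc2}, and $\mathrm b(\varphi,u_1,\varphi)$ is absorbed using \eqref{la1} and the bound $u_1\in\mathsf L^\infty(0,T;\mathbb V)$ from Proposition \ref{P1}. Testing instead by $-\varphi_t$ and integrating over $(t,T)$ reproduces \eqref{as3}--\eqref{as4} and supplies the missing $\|\varphi_t\|^2_{\mathsf L^2(0,T;\mathbb H)}+\mu\|\varphi_t\|^2_{\mathsf L^2(0,T;\mathbb V)}$; combined with the first test and a backward Gronwall argument, these control the full $\mathcal Z_1$-norm of $\varphi$. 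The only genuinely new work is bounding $\langle G,\eta\rangle$ for $\eta\in\{\varphi,-\varphi_t\}$.

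For $-\kappa\Delta\bar u$ I integrate by parts and apply Young's inequality, producing $C\|\bar u\|^2_{\mathbb V}$ plus an absorbable multiple of $\|\eta\|^2_{\mathbb V}$. The two convective pieces rewrite as trilinear forms, $((\nabla\bar u)^T\varphi_2,\eta)=\mathrm b(\eta,\bar u,\varphi_2)$ and $((\bar u\cdot\nabla)\varphi_2,\eta)=\mathrm b(\bar u,\varphi_2,\eta)$; since $\varphi_2$ is merely a weak adjoint solution in $\mathsf L^\infty(0,T;\mathbb V)$ and not in $\mathbb H^2$, I must use the weaker trilinear bound \eqref{2.1} and arrange Young's inequality so that the bounded factor is always $\|\varphi_2\|_{\mathbb V}$, controlled uniformly by the adjoint estimate \eqref{as4} for $U_2$. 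The main obstacle is the damping difference $\beta(f^\prime(u_1)-f^\prime(u_2))\varphi_2$: here I would use $r\geq3$, so that $f^{\prime\prime}$ exists and is continuous, and write $f^\prime(u_1)-f^\prime(u_2)=\int_0^1 f^{\prime\prime}(u_2+\theta\bar u)[\bar u,\cdot]\,d\theta$ via \eqref{U1x}, giving the pointwise bound $|(f^\prime(u_1)-f^\prime(u_2))\varphi_2|\leq C(|u_1|^{r-2}+|u_2|^{r-2})|\bar u||\varphi_2|$. The embedding $\mathbb H^2\hookrightarrow\mathbb L^\infty$ together with Proposition \ref{P1} turns $|u_i|^{r-2}$ into the bounded factor $\|u_i\|^{r-2}_{\mathbb H^2}\leq C(C_2)$, and a Hölder splitting $\mathbb L^2\times\mathbb L^4\times\mathbb L^4$ with $\mathbb V\hookrightarrow\mathbb L^4$ yields $C\|\bar u\|_{\mathbb H}\|\varphi_2\|_{\mathbb V}\|\eta\|_{\mathbb V}$, again absorbable after Young's inequality.

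Collecting these estimates and applying Gronwall on $(t,T)$, with all constants uniform over $\mathcal U$ thanks to the a priori bounds on $u_1,u_2$ (Proposition \ref{P1}) and on $\varphi_2$ (the adjoint estimates of Theorem \ref{foc2}), I obtain $\|\varphi\|^2_{\mathcal Z_1}\leq C\big(\|\bar u\|^2_{\mathsf L^2(0,T;\mathbb V)}+\|\bar u\|^2_{\mathsf L^2(0,T;\mathbb H)}\big)$. Finally, the Lipschitz continuity of $\mathcal S$ from Lemma \ref{LCS}, namely \eqref{LCC}, bounds $\|\bar u\|_{\mathcal Z_1}\leq K_1\|U_1-U_2\|_{\mathsf L^2(0,T;\mathbb H)}$ and hence $\|\bar u\|_{\mathsf L^2(0,T;\mathbb V)}$, which delivers \eqref{LCC10} with $K_3$ depending on $K_1$, $C_2$ and the adjoint bounds, i.e. on the data listed in the statement. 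I expect the damping difference term, and the bookkeeping needed to keep $\varphi_2$ (which is not in $\mathbb H^2$) out of every factor carrying a derivative, to be the delicate points.
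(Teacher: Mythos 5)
Your proposal is correct and follows essentially the same route as the paper: subtract the two adjoint systems to obtain a backward linear equation for $\varphi_{U_1}-\varphi_{U_2}$ with forcing $-\kappa\Delta\widehat u-(\nabla\widehat u)^T\varphi_{U_2}+(\widehat u\cdot\nabla)\varphi_{U_2}-\beta\big(f^\prime(u_{U_1})-f^\prime(u_{U_2})\big)\varphi_{U_2}$, run the energy estimates of Theorem \ref{foc2} with the tests $\varphi$ and $-\varphi_t$, handle the damping difference via Taylor's formula for $f^\prime$ (using $r\geq 3$ and $\mathbb H^2\hookrightarrow\mathbb L^\infty$ through Proposition \ref{P1}), and close with Gronwall and the Lipschitz bound \eqref{LCC} on $\widehat u$. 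The only (immaterial) deviation is your H\"older splitting of the damping term, which places $\widehat u$ in $\mathbb L^2$ and the test function in $\mathbb L^4$, whereas the paper's \eqref{lcad1} keeps the test function in $\mathbb H$ and puts both $\widehat u$ and $\varphi_{U_2}$ in $\mathbb V$; both versions are absorbable.
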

\begin{proof}
	Let $u_{U_1},u_{U_2}$ and $\varphi_{U_1},\varphi_{U_2}$  be the strong and weak solutions of \eqref{1.1} and \eqref{as}, respectively   associated with the controls $U_1,U_2\in\mathcal U.$  Define $\widehat u:=u_{U_1}-u_{U_2}, \varphi:=\mathcal A(U_1)-\mathcal A(U_2)=\varphi_{U_1}-\varphi_{U_2}$ and the pressure $\widehat \psi:=\psi_{U_1}-\psi_{U_2}.$  Then the pair $(\varphi,\widehat \psi)$ solves the equation
	\begin{eqnarray}\label{lcadxx}
		\left\{\begin{array}{lll}
			\mathcal E_{U_1}\varphi+\nabla\widehat\psi =-\kappa \Delta \widehat u+ \mathfrak V_1+\mathfrak V_2 \ \ \	 \mbox{in} \ \ \ \Omega_0\\ [2mm]
			\nabla\cdot\varphi =0 \ \ \ \ \mbox{in} \ \ \   \Omega_0 ,  \ \ \ \  \varphi(x,T)-\mu\Delta \varphi(x,T) =0 \ \ \mbox{in} \ \ \ \Omega,
		\end{array}\right.
	\end{eqnarray}
	where $\mathcal E_{U_1}\varphi$ is defined in \eqref{as},
	$$\mathfrak V_1= -(\nabla \widehat u)^T\varphi_{U_2}+(\widehat u\cdot \nabla)\varphi_{U_2},  \ \ \mbox{and} \ \
	\mathfrak V_2=-\beta\left(f^\prime(u_{U_1})-f^\prime(u_{U_2})\right)\varphi_{U_2}.$$
	Testing \eqref{lcadxx} with $\varphi$ and doing estimations similar to \eqref{rhs1},\eqref{rhs} and \eqref{lcfd1}, we obtain  that
	\begin{eqnarray}\label{lcad1}
		&&\lefteqn{|(-\kappa \Delta \widehat u+\mathfrak V_1+\mathfrak V_2,\varphi)|}\nonumber\\
		&\leq& C(\delta_3) \|\widehat u(t)\|^2_{\mathbb V} +\delta_3 \|\varphi(t)\|^2_{\mathbb V}+C(\delta_3) \|\widehat u(t)\|^2_{\mathbb V} \|\varphi_{U_2}(t)\|^2_{\mathbb V}\\
		&&+C(\delta_5)\left(\|u_{U_1}(t)\|^{2(r-2)}_{\mathbb H^2}+\|u_{U_2}(t)\|^{2(r-2)}_{\mathbb H^2}\right)\|\varphi_{U_2}(t)\|^2_{\mathbb V}\|\widehat u(t)\|_{\mathbb V}^2+\delta_5\|\varphi(t)\|_{\mathbb H}^2, \nonumber
	\end{eqnarray}
	for any $r\geq 3.$ By proceeding as in part-(i) of Theorem \ref{foc2} and using \eqref{lcad1}, we infer from \eqref{as2} and \eqref{as4} that the proof  can be completed by invoking Lemma \ref{LCS}.
\end{proof}
\begin{proposition} \label{FDC} For any $\widetilde  U\in \mathcal U, $ let $u_{\widetilde  U}$ be the unique strong solution of \eqref{1.1}. Then we have:
	\begin{enumerate}	
		\item[(i)] 	The control-to-costate  mapping $\mathcal A$ is Fr\'echet differentiable on $\mathcal U,$ that is, for any $\widetilde  U\in \mathcal U, $ there exists a bounded linear operator $\mathcal A^\prime(\widetilde  U): \mathsf L^2(0,T;\mathbb H)\to  \mathcal Z_1$  such that $$\frac{\|\mathcal A(\widetilde  U+U)-\mathcal A(\widetilde  U)-\mathcal A ^\prime (\widetilde  U)U\|_{\mathcal Z_1}}{\|U\|_{\mathsf L^2(0,T;\mathbb H) }} \to 0 \ \ \ \ \mbox{as} \ \ \ \|U\|_{\mathsf L^2(0,T;\mathbb H) } \to 0.$$ Moreover, for any $\widetilde  U\in \mathcal U, $ the Fr\'echet derivative is given by $\mathcal A ^\prime (\widetilde  U)U=\varphi^\prime_{\widetilde U}[U], \forall U\in \mathsf L^2(0,T;\mathbb H),$  where  $\varphi^\prime_{\widetilde U}[U]$ is the unique weak solution of the equation
		\begin{eqnarray}\label{as11}
			\left\{\begin{array}{lll}
				\mathcal E_{\widetilde U}\phi+\nabla \bar \psi = -\kappa\Delta  w^\prime_{\widetilde  U}[U] -(\nabla w^\prime_{\widetilde U}[U])^T\varphi _{\widetilde U}+(w^\prime_{\widetilde U}[U]\cdot \nabla)\varphi_{\widetilde U}\\ [1mm]\hspace{1in}-\beta f^{\prime\prime}(u_{\widetilde U})[w^\prime_{\widetilde U},\varphi_{\widetilde U} ] \ \ \	\ \ \mbox{in} \ \ \Omega_0 \\ [2mm]
				\nabla\cdot\phi =0 \ \ \mbox{in} \ \  \Omega_0 ,  \ \   \phi(x,T)-\mu\Delta \phi(x,T) =0 \  \ \ \ \mbox{in} \ \ \ \Omega,
			\end{array}\right.
		\end{eqnarray}
		while $w^\prime_{\widetilde U}[U]$ is the weak solution of \eqref{ls1} with control $U,$  $\varphi_{\widetilde U}$ is that of \eqref{as} and $\mathcal E_{\widetilde U}\phi$ is the linear operator defined in \eqref{as}.
		\item[(ii)] For any controls $U_1, U_2\in\mathcal U$ and  $U\in \mathsf L^2(0,T;\mathbb H),$ there exists a constant $K_4>0$  such that
		\begin{eqnarray*}\label{LCC1x}
			\|\mathcal A^\prime(U_1)U-\mathcal A^\prime(U_2)U\|_{\mathcal Z_1} \leq K_4 \|U_1-U_2\|_{\mathsf L^2(0,T;\mathbb H)}\|U\|_{\mathsf L^2(0,T;\mathbb H)},
		\end{eqnarray*}
		where $K_4$  depends on system parameters,  $K_1,K_2,K_3,\Omega, T,C_P,R$ and  $\|u_0\|_{\mathbb H^2}.$
	\end{enumerate}
\end{proposition}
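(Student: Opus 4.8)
The plan is to follow the blueprint of Proposition \ref{FDS}, transplanting it from the linearized state equation \eqref{ls1} to the adjoint equation \eqref{as}. Fix $\widetilde U\in\mathcal U$ and, for $U$ small enough that $\widetilde U+U\in\mathcal U$, set $\widehat u:=u_{\widetilde U+U}-u_{\widetilde U}$ and introduce the remainder $\zeta:=\mathcal A(\widetilde U+U)-\mathcal A(\widetilde U)-\varphi^\prime_{\widetilde U}[U]=\varphi_{\widetilde U+U}-\varphi_{\widetilde U}-\varphi^\prime_{\widetilde U}[U]$. The first task is to write down the linear system solved by $\zeta$. Since $\varphi_{\widetilde U+U}$ solves \eqref{as} with the operator $\mathcal E_{\widetilde U+U}$ while $\varphi_{\widetilde U}$ and $\varphi^\prime_{\widetilde U}[U]$ carry the operator $\mathcal E_{\widetilde U}$, I would add and subtract $\mathcal E_{\widetilde U}\varphi_{\widetilde U+U}$ so that $\zeta$ satisfies $\mathcal E_{\widetilde U}\zeta+\nabla(\cdots)=\mathcal R$ with the zero terminal condition and the divergence-free constraint, where the forcing $\mathcal R$ collects exactly three groups of terms: (a) $-\kappa\Delta z$ with $z:=u_{\widetilde U+U}-u_{\widetilde U}-w^\prime_{\widetilde U}[U]$ the second-order state remainder of Proposition \ref{FDS}; (b) the convective and stretching defects $-(\nabla\widehat u)^T\varphi_{\widetilde U+U}+(\nabla w^\prime_{\widetilde U}[U])^T\varphi_{\widetilde U}$ and $(\widehat u\cdot\nabla)\varphi_{\widetilde U+U}-(w^\prime_{\widetilde U}[U]\cdot\nabla)\varphi_{\widetilde U}$; and (c) the damping defect $-\beta\big(f^\prime(u_{\widetilde U+U})-f^\prime(u_{\widetilde U})\big)\varphi_{\widetilde U+U}+\beta f^{\prime\prime}(u_{\widetilde U})[w^\prime_{\widetilde U}[U],\varphi_{\widetilde U}]$.

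The second step is to show that $\mathcal R$ is of higher order in $\|U\|_{\mathsf L^2(0,T;\mathbb H)}$. For group (a), Proposition \ref{FDS} already gives $\|z\|_{\mathcal Z_1}=o(\|U\|_{\mathsf L^2(0,T;\mathbb H)})$, which controls $-\kappa\Delta z$ weakly through $\kappa(\nabla z,\nabla\,\cdot\,)$. For group (b), I would insert $\widehat u=w^\prime_{\widetilde U}[U]+z$ and $\varphi_{\widetilde U+U}=\varphi_{\widetilde U}+(\varphi_{\widetilde U+U}-\varphi_{\widetilde U})$; the cross terms then become products of two first-order quantities, since $w^\prime_{\widetilde U}[U]$ is $O(\|U\|)$ by \eqref{ls8} and $\varphi_{\widetilde U+U}-\varphi_{\widetilde U}$ is $O(\|U\|)$ by Lemma \ref{LCA}, hence $O(\|U\|^2)$, while the terms containing $z$ are $o(\|U\|)$; the embeddings $\mathbb H^2\hookrightarrow\mathbb L^\infty$ and $\mathbb V\hookrightarrow\mathbb L^4$ furnish the requisite H\"older bounds. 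Group (c) is the delicate one: a first-order Taylor expansion gives $f^\prime(u_{\widetilde U+U})-f^\prime(u_{\widetilde U})=\int_0^1 f^{\prime\prime}(u_{\widetilde U}+\theta\widehat u)[\widehat u,\,\cdot\,]\,d\theta$, so the damping defect splits into $\beta\int_0^1\big(f^{\prime\prime}(u_{\widetilde U}+\theta\widehat u)-f^{\prime\prime}(u_{\widetilde U})\big)[w^\prime_{\widetilde U}[U],\varphi_{\widetilde U}]\,d\theta$ plus remainders carrying a factor $z$ or $\varphi_{\widetilde U+U}-\varphi_{\widetilde U}$; the first piece is handled by the Lipschitz continuity of $f^{\prime\prime}$ (its mean-value form with $f^{\prime\prime\prime}$), which is precisely where the hypothesis $r\ge 3$ enters, and it too is quadratically small. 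Once the $\mathsf L^2(0,T;\mathbb H)$-smallness of groups (b), (c) together with $\|z\|_{\mathcal Z_1}=o(\|U\|)$ is established, the a priori estimate for the adjoint operator derived in the proof of Theorem \ref{foc2} (cf. \eqref{as2}--\eqref{as4}) applied to the $\zeta$-system yields $\|\zeta\|_{\mathcal Z_1}=o(\|U\|_{\mathsf L^2(0,T;\mathbb H)})$, which is claim (i) with $\mathcal A^\prime(\widetilde U)U=\varphi^\prime_{\widetilde U}[U]$; boundedness and linearity of $U\mapsto\varphi^\prime_{\widetilde U}[U]$ follow from the same estimate applied directly to \eqref{as11}.

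For part (ii) I would proceed exactly as in Proposition \ref{FDS}(ii): given $U_1,U_2\in\mathcal U$ and a fixed direction $U$, write $\phi:=\mathcal A^\prime(U_1)U-\mathcal A^\prime(U_2)U=\varphi^\prime_{U_1}[U]-\varphi^\prime_{U_2}[U]$, subtract the two copies of \eqref{as11}, and collect the resulting forcing, which consists of differences of the coefficient state $u_{U_1}$ versus $u_{U_2}$, of the linearized states $w^\prime_{U_1}[U]$ versus $w^\prime_{U_2}[U]$, and of the adjoints $\varphi_{U_1}$ versus $\varphi_{U_2}$. Each of these differences is controlled by $\|U_1-U_2\|_{\mathsf L^2(0,T;\mathbb H)}$ through Lemma \ref{LCS}, Proposition \ref{FDS}(ii) and Lemma \ref{LCA} respectively, while each remaining factor is bounded by $\|U\|_{\mathsf L^2(0,T;\mathbb H)}$; the same adjoint a priori estimate then gives the asserted bound with $K_4=K_4(K_1,K_2,K_3,\Omega,T,C_P,R,\|u_0\|_{\mathbb H^2})$. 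The main obstacle throughout is the damping term: the Taylor remainders in group (c) and their analogues in part (ii) require a uniform-in-$\theta$ bound on $f^{\prime\prime\prime}$ evaluated along the segment joining the two states, which is available only for $r\ge 3$ (for $r<3$ the third derivative degenerates, exactly as the piecewise definitions \eqref{ld} and \eqref{U1x} already signal), and one must check that every product of two difference-quantities is genuinely quadratically small by a careful interplay of the embeddings $\mathbb H^2\hookrightarrow\mathbb L^\infty$, $\mathbb V\hookrightarrow\mathbb L^4$ and the boundedness of $\varphi_{\widetilde U}$ guaranteed by \eqref{as2}.
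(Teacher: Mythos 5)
Your proposal is correct and follows essentially the same route as the paper: the same remainder $\zeta=\varphi_{\widetilde U+U}-\varphi_{\widetilde U}-\varphi^\prime_{\widetilde U}[U]$ satisfying an adjoint-type system whose forcing splits into $-\kappa\Delta z$, the convective/stretching defects (which, after inserting $\widehat u=w^\prime_{\widetilde U}[U]+z$ and $\varphi_{\widetilde U+U}=\varphi_{\widetilde U}+\widehat\varphi$, reduce to the paper's $\mathfrak V_3$), and the damping defect whose Taylor expansion produces exactly the paper's $\mathfrak V_4$ and the $f^{\prime\prime\prime}$-remainder $\mathfrak V_5$ where $r\geq 3$ is needed, all closed by the adjoint a priori estimates and the Lipschitz bounds of Lemmas \ref{LCS}, \ref{LCA} and Proposition \ref{FDS}. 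Part (ii) likewise matches the paper's subtraction-and-estimate argument, so no gaps.
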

\begin{proof}
	By following the line of proof  of Theorem \ref{foc2}, we can show that \eqref{as11}  admits a unique weak solution $\varphi^\prime_{\widetilde U}[U]\in \mathsf H^1(0,T;\mathbb V)$.
	To prove the Fr\'echet differentiability, as in the proof of Proposition \ref{FDS}-(i), let us consider  $\xi_{\widetilde U}:=\varphi_{\widetilde U+U}-\varphi_{\widetilde U}-\varphi^\prime_{\widetilde U}[U]$ and the pressure $\widetilde \psi:=\psi_{\widetilde U+U}-\psi_{\widetilde U}-\bar\psi, \; z:=u_{\widetilde U+U}-u_{\widetilde U}-w^\prime_{\widetilde U}[U],$ where we recall that $z$ is a weak solution of \eqref{pls1}. Then we can check that $(\xi_{\widetilde U},\widetilde \psi)$ is the unique weak solution of the equation
	\begin{eqnarray}\label{as12}
		\left\{\begin{array}{lll}
			\mathcal E_{\widetilde U}\xi+\nabla \widetilde\psi = -\kappa\Delta z +\mathfrak V_3+\mathfrak V_4+\mathfrak V_5  \	\ \ \mbox{in} \ \ \Omega_0 \\ [2mm]
			\nabla\cdot\xi =0 \ \ \mbox{in} \ \  \Omega_0 ,  \ \ \  \  \xi(x,T)-\mu\Delta \xi(x,T) =0 \  \ \ \ \mbox{in} \ \ \ \Omega,
		\end{array}\right.
	\end{eqnarray}
	where $ \mathfrak V_3:=-(\nabla z)^T\varphi_{\widetilde U}-(\nabla \widehat u)^T\widehat \varphi+(z\cdot \nabla )\varphi_{\widetilde U}+(\widehat u\cdot \nabla)\widehat \varphi,$
	\begin{eqnarray*}
		\mathfrak V_4&:=&-\beta f^{\prime\prime}(u_{\widetilde U})[z,\varphi_{\widetilde U}]-\beta\big(f^\prime(u_{\widetilde U+U})- f^\prime(u_{\widetilde U})\big)\widehat \varphi\\
		&=&-\beta f^{\prime\prime}(u_{\widetilde U})[z,\varphi_{\widetilde U}]-\beta \int_0^1f^{\prime\prime}(u_{\widetilde  U}+\theta \widehat u)[\widehat u,\widehat\varphi]d\theta ,\\
		\mathfrak V_5&:=&-\beta\big( f^\prime(u_{\widetilde U+U})\varphi_{\widetilde U}- f^\prime(u_{\widetilde U})\varphi_{\widetilde U} -f^{\prime\prime}(u_{\widetilde U})[\widehat u,\varphi_{\widetilde U}]\big)\\
		&=&-\beta\int_0^1(1-\bar \theta)f^{\prime\prime\prime}(u_{\widetilde U}+\bar\theta \widehat u)[\widehat u,\widehat u,\varphi_{\widetilde U}]d\bar\theta,
	\end{eqnarray*}
	where we used Taylor's formula and $\widehat u:=u_{\widetilde U+U}-u_{\widetilde U},\  \ \widehat \varphi:=\varphi_{\widetilde U+U}-\varphi_{\widetilde U}.$
	
	Let us estimate the right-hand side of \eqref{as12}.  Multiplying \eqref{as12} by $\xi,$   repeating the calculations \eqref{rhs1}-\eqref{rhs}, and using  \eqref{2.1}, one may obtain that
	\begin{eqnarray}\label{Fd1}
		|\langle-\kappa\Delta z+\mathfrak V_3,\xi\rangle|
		&\leq& C(\delta_3) \left(\|z(t)\|^2_{\mathbb V}+\|z(t)\|^2_{\mathbb V} \|\varphi_{\widetilde U}(t)\|^2_{\mathbb V} \right.\nonumber\\
		&&\left.+ \|\widehat u(t)\|^2_{\mathbb V} \|\widehat\varphi(t)\|^2_{\mathbb V}\right) +\delta_3 \|\xi(t)\|^2_{\mathbb V}.
	\end{eqnarray}
	For any $r\geq 3,$ by invoking  the second derivative formula \eqref{U1x}, we infer  from the inequality \eqref{lcfd1} that
	\begin{eqnarray*}\label{Fd2}
		|(\mathfrak V_4,\xi)|&\leq& 	C(\delta_5)\|u_{\widetilde U}(t)\|_{\mathbb H^2}^{2(r-2)}\|z(t)\|_{\mathbb V}^2\|\varphi_{\widetilde U}(t)\|_{\mathbb V}^2 +\delta_5\|\xi(t)\|_{\mathbb H}^2\nonumber \\
		&&+C(\delta_5)\left(\|u_{\widetilde U+U}(t)\|^{2(r-2)}_{\mathbb H^2}+\|u_{\widetilde U}(t)\|^{2(r-2)}_{\mathbb H^2}\right)\|\widehat u(t)\|_{\mathbb V}^2\|\widehat\varphi(t)\|_{\mathbb V}^2.
	\end{eqnarray*}
	Let us use \eqref{U1x} to compute the third derivative  $f^{\prime\prime\prime}(\cdot)[\cdot,\cdot,\cdot]$\footnote{\noindent For any $r\geq 7,$  we have\begin{eqnarray*}\label{tdf}
			f^{\prime\prime\prime}(p)[q,g,h]&=&(r-1)(r-3)(r-5)|p|^{r-7}(p\cdot q)(p\cdot g)(p\cdot h)p\nonumber\\
			&&+(r-1)(r-3)|p|^{r-5}\left[(p\cdot g)(h\cdot q)p+(p\cdot q)(h\cdot g)p+(p\cdot q)(p\cdot g)h\right.\\
			&&\left.+(p\cdot h)\Big((p\cdot q)g+(p\cdot g)q+(g\cdot q)p\Big)\right] \\
			&&+(r-1)|p|^{r-3}\left[(h\cdot q)g+(h\cdot g)q+(g\cdot q)h\right].	
		\end{eqnarray*} Further, for any $3<r<7, p\neq 0,$ the above formula is valid for $f^{\prime\prime\prime}(p)[\cdot,\cdot,\cdot]$, and also one needs to set that $f^{\prime\prime\prime}(p)[\cdot,\cdot,\cdot]=0,$ if $p=0.$ For $r=3,$ we get $f^{\prime\prime\prime}(p)[q,g,h]=2\left[(h\cdot q)g+(h\cdot g)q+(g\cdot q)h\right].$ }       and the embedding  $\mathbb V\hookrightarrow\mathbb L^6$ to get
	\begin{eqnarray}\label{Fd3}
		|(\mathfrak V_5,\xi)|
		&\leq& \beta\sup_{\bar\theta\in(0,1)}\left\|\int_0^1(1-\bar\theta) f^{\prime\prime\prime}(u_{\widetilde U}+\bar\theta \widehat u)[\widehat u,\widehat u,\varphi_{\widetilde U}]d\bar\theta\right\|_{\mathbb H}\|\xi(t)\|_{\mathbb H} \nonumber\\
		&\leq& C(\delta_5)\int_\Omega  (| u_{\widetilde  U+U}|+|u_{\widetilde{U}}|)^{2(r-3)}|\widehat u|^2|\widehat u|^2|\varphi_{\widetilde U}|^2 dx+\delta_5\|\xi(t)\|_{\mathbb H}^2\nonumber\\
		&\leq& C\left(\|u_{\widetilde  U+U}(t)\|^{2(r-3)}_{\mathbb L^\infty}\!+\!\|u_{\widetilde  U}(t)\|^{2(r-3)}_{\mathbb L^\infty}\right)\|\widehat u(t)\|_{\mathbb L^6}^4\|\varphi_{\widetilde U}(t)\|^2_{\mathbb L^6}\!+\!\delta_5\|\xi(t)\|_{\mathbb H}^2\nonumber\\
		&\leq& C\left(\|u_{\widetilde  U+U}(t)\|^{2(r-3)}_{\mathbb H^2}+ \|u_{\widetilde  U}(t)\|^{2(r-3)}_{\mathbb H^2}\right)\|\widehat u(t)\|_{\mathbb V}^4\|\varphi_{\widetilde U}(t)\|^2_{\mathbb V}\nonumber\\
		&&+\delta_5\|\xi(t)\|_{\mathbb H}^2,  \  \ \mbox{for any} \  r\geq 7.
	\end{eqnarray}
	For $3\leq r< 7, (u_{\widetilde U}+\bar\theta \widehat u)\neq 0,$ the bound \eqref{Fd3} holds true.
	
	For any $r\geq 3,$ taking the inequalities \eqref{Fd1}-\eqref{Fd3} into account, choosing $\delta_3=\nu/4,\delta_5=\alpha/4,$ we infer from   \eqref{as2}, and the continuous embedding $\mathsf H^1(0,T; \mathbb V)\hookrightarrow \mathsf L^\infty(0,T; \mathbb V)$ that
	\begin{eqnarray}\label{Fd4}
		\Phi(t,\xi)	&\leq&C\exp\left(C T\|u_{\widetilde U}\|_{\mathsf L^\infty(0,T;\mathbb V)}^4\right)\left[ M_1\| z\|_{\mathsf L^2(0,T;\mathbb V)}^2 \nonumber\right.\\
		&&\left.+M_2\|\widehat\varphi\|_{\mathsf L^\infty(0,T;\mathbb V)}^2\|\widehat u\|_{\mathsf L^2(0,T;\mathbb V)}^2 +M_3\|\widehat u\|_{\mathsf L^\infty(0,T;\mathbb V)}^4\right]\nonumber\\
		&\leq& C(C_2,K_1,K_3,R) \left(\|U\|^3_{\mathsf L^2(0,T;\mathbb H)}+\|U\|^4_{\mathsf L^2(0,T;\mathbb H)}\right), \ \ \forall t\in[0,T).
	\end{eqnarray}
	The last inequality follows from Proposition \ref{P1}, \eqref{ls81}, Theorem \ref{foc2}, Lemmas \ref{LCS},\ref{LCA} and
	\begin{eqnarray*}
		M_1&:=&\left[ 1+\|\varphi_{\widetilde U}\|^2_{\mathsf L^\infty(0,T;\mathbb V)}\left(1+\|u_{\widetilde U}\|_{\mathsf L^\infty(0,T;\mathbb H^2)}^{2(r-2)}\right)\right], \\
		M_2&:=&\left[1+[M_0\big(\widetilde  U+ U,\widetilde U\big)]^{2(r-2)}\right],  \ \
		M_3=[M_0\big(\widetilde  U+ U,\widetilde U\big)]^{2(r-3)}\|\varphi_{\widetilde U}\|^2_{\mathsf L^\infty(0,T;\mathbb V)},		
	\end{eqnarray*}
	and recall $M_0$  defined in \eqref{U1}.
	
	Multiplying \eqref{as12} by $-\xi_t,$ one can obtain in view of \eqref{as4} and straightforward modification of the  inequalities \eqref{Fd1}-\eqref{Fd3} that
	\begin{eqnarray}\label{Fd5}
		\Phi_1(t,\xi)	&\leq& C\left[\left(\|u_{\widetilde U}\|^2_{\mathsf L^\infty(0,T;\mathbb{V})}+\|u_{\widetilde U}\|_{\mathsf L^\infty(0,T;\mathbb H^2)}^{2(r-1)}\right)\|\xi\|_{\mathsf L^2(0,T;\mathbb V)}^2+\|\xi\|^2_{\mathsf L^\infty(0,T;\mathbb{V})}\right. \nonumber\\
		&&\left. +M_1\| z\|_{\mathsf L^2(0,T;\mathbb V)}^2 +M_2\|\widehat\varphi\|_{\mathsf L^\infty(0,T;\mathbb V)}^2\|\widehat u\|_{\mathsf L^2(0,T;\mathbb V)}^2 +M_3\|\widehat u\|_{\mathsf L^\infty(0,T;\mathbb V)}^4\right]\nonumber\\
		&\leq& C(C_2,K_1,K_3,R) \left(\|U\|^3_{\mathsf L^2(0,T;\mathbb H)}+\|U\|^4_{\mathsf L^2(0,T;\mathbb H)}\right), \ \ \forall t\in[0,T),
	\end{eqnarray}
	where we also used \eqref{Fd4}. From \eqref{Fd5}, one may notice that $\|\xi\|_{\mathcal Z_1}/\|U\|_{\mathsf L^2(0,T;\mathbb H)}\to 0$ as $\|U\|_{\mathsf L^2(0,T;\mathbb H)}\to 0,$ which completes the proof of part-(i).
	
	To prove part-(ii), for any two controls $U_1, U_2\in\mathcal U$ and  $U\in \mathsf L^2(0,T;\mathbb H),$ let    us set $\widehat\phi:=\varphi^\prime_{U_1}[U]-\varphi^\prime_{U_2}[U],  \psi:=\bar\psi_{U_1}-\bar\psi_{U_2},$  $\widehat u:=u_{U_1}-u_{U_2},\widehat w:=w^\prime_{U_1}[U]-w^\prime_{U_2}[U]$ and $\widehat\varphi:=\varphi_{U_1}-\varphi_{U_2}.$ For simplicity, we write $w^\prime_{U_i}[U],\varphi^\prime_{U_i}[U],i=1,2$ as $w^\prime_{U_i},\varphi^\prime_{U_i},i=1,2.$   Then $(\widehat \phi,\psi)$ solves the equation
	\begin{eqnarray*}\label{as13}
		\left\{\begin{array}{lll}
			\mathcal E_{U_1}\widehat\phi+\nabla \psi = -\kappa\Delta  \widehat w +\mathfrak V_6+\mathfrak V_7+\mathfrak V_8+\mathfrak V_9 \ \ \	\ \ \mbox{in} \ \ \Omega_0 \\ [2mm]
			\nabla\cdot\widehat\phi =0 \ \ \mbox{in} \ \  \Omega_0 ,  \ \ \  \  \widehat\phi(x,T)-\mu\Delta \widehat\phi(x,T) =0 \  \ \ \ \mbox{in} \ \ \ \Omega,
		\end{array}\right.
	\end{eqnarray*}
	where $\mathfrak V_6= -(\nabla \widehat u)^T\varphi^\prime_{U_2}+(\widehat u\cdot \nabla)\varphi^\prime_{U_2}  \ \mbox{and}  \
	\mathfrak V_7=-\beta\left(f^\prime(u_{U_1})-f^\prime(u_{U_2})\right)\varphi^\prime_{U_2},$ and
	{\small\begin{eqnarray*}
			\mathfrak V_8&:=&-(\nabla \widehat w)^T\varphi_{U_1}-(\nabla  w^\prime_{U_2})^T\widehat\varphi+(w^\prime_{U_1}\cdot \nabla)\widehat\varphi+(\widehat w\cdot \nabla)\varphi_{U_2},\\
			\mathfrak V_9&:=&-\beta\left(f^{\prime\prime}(u_{U_1})-f^{\prime\prime}(u_{U_2})\right)[w^\prime_{U_1},\varphi_{U_1}]-\beta f^{\prime\prime}(u_{U_2})[\widehat w,\varphi_{U_2}]-\beta f^{\prime\prime}(u_{U_2})[w^\prime_{U_1},\widehat \varphi].
	\end{eqnarray*}}The terms $\mathfrak V_6,\mathfrak V_7$ can be estimated as in \eqref{lcad1}.   In view of \eqref{2.1}, one can get
	\begin{eqnarray*}\label{as14}
		|\langle-\kappa\Delta  \widehat w+\mathfrak V_8,\widehat\phi\rangle|
		&\leq& \delta_3 \|\widehat\phi(t)\|^2_{\mathbb V}+C(\delta_3) \left(\|w^\prime_{U_1}(t)\|^2_{\mathbb V}+\|w^\prime_{U_2}(t)\|^2_{\mathbb V}\right) \|\widehat \varphi(t)\|^2_{\mathbb V}\nonumber\\
		&&+C(\delta_3) \left(1+\|\varphi_{U_1}(t)\|^2_{\mathbb V}+\|\varphi_{U_2}(t)\|^2_{\mathbb V}\right) \|\widehat w(t)\|^2_{\mathbb V}.
	\end{eqnarray*}
	Using again Taylor's formula, we write the term $\left(f^{\prime\prime}(u_{U_1})-f^{\prime\prime}(u_{U_2})\right)[w^\prime_{U_1},\varphi_{U_1}] =\int_0^1f^{\prime\prime\prime}(u_{U_2}+\theta \widehat u)[\widehat u,w^\prime_{U_1},\varphi_{U_1}]d\theta.$ 
	By computations similar to \eqref{lcfd1} and \eqref{Fd3}, for any $r\geq 3,$ we arrive at
	\begin{eqnarray*}
		|(\mathfrak V_9,\widehat\phi)|&\leq& C(\delta_5)\|u_{U_2}(t)\|_{\mathbb H^2}^{2(r-2)}\left(\|\widehat w(t)\|^2_{\mathbb V}\|\varphi_{U_2} (t)\|_{\mathbb V}^2+\|\widehat\varphi(t)\|^2_{\mathbb V}\|w^\prime_{U_1}(t)\|_{\mathbb V}^2\right)\nonumber \\
		&&+C(\delta_5)\left(\|u_{ U_1}(t)\|^{2(r-3)}_{\mathbb H^2}+ \|u_{U_2}(t)\|^{2(r-3)}_{\mathbb H^2}\right)\\
		&&\quad\times\|\widehat u(t)\|_{\mathbb V}^2\|w^\prime_{U_1}(t)\|_{\mathbb V}^2\|\varphi_{U_1}(t)\|^2_{\mathbb V}+\delta_5\|\widehat\phi(t)\|_{\mathbb H}^2.	
	\end{eqnarray*}
	The proof of this part follows by the same arguments of  part-(i) or Theorem \ref{foc2}. Indeed,
	note that $\varphi^\prime_{U_i}\in \mathsf L^\infty(0,T;\mathbb V),$  by Theorems \ref{WSLS},\ref{foc2}, the solutions $w^\prime_{U_i},\varphi_{U_i}\in \mathsf L^\infty(0,T;\mathbb V),$ and by Theorem \ref{sst}, strong solutions $u_{U_i}\in \mathsf L^\infty(0,T;\mathbb H^2).$ Thus, utilizing the Lipschitz continuity given by Lemmas \ref{LCS}, \ref{LCA} and Proposition \ref{FDS}, one can conclude the proof.
\end{proof}
\begin{remark}
	It is clear from Propositions \ref{FDS},\ref{FDC} that the control-to-state operator $\mathcal S:\mathcal U\to \mathcal Z_1$ and control-to-costate operator $\mathcal A:\mathcal U\to \mathcal Z_1$ are both Fr\'echet differentiable and the derivatives are Lipschitz continuous.  We may even show that they are  continuously differentiable  under appropriate conditions on data and the growth value of $r.$
\end{remark}

\subsection {Local optimality conditions} As we pointed out earlier in this section,  second-order sufficient conditions for a local optimal control of (MOCP) are written only in the \emph{cone of critical directions}. For details on the relation between the critical directions and second-order necessary/sufficient conditions, one may look at   \cite{Tr,Cas}.
\begin{definition}[Critical Cone] For  $\widetilde U\in \mathcal U_{ad},$ let $\mathcal C(\widetilde U)$ denotes the set of all  $U\in\mathsf L^2(0,T;\mathbb H)$ such that
	\begin{eqnarray*}
		U(x,t)\left\{\begin{array}{llll}
			\geq 0 &\mbox{if}& \widetilde U(x,t)=U_{\min}(x,t)\\
			\leq 0 &\mbox{if}& \widetilde U(x,t)=U_{\max}(x,t)\\
			= 0 &\mbox{if}& \varphi(x,t)+\lambda\widetilde U(x,t)\neq 0, \ \ \mbox{for allmost all}\ (x,t)\in\Omega_T.
		\end{array}
		\right.
	\end{eqnarray*}
	
\end{definition}
\begin{theorem} \label{SOC0} Let $\Omega$ be a periodic domain in $\mathbb R^3.$ Let $\widetilde U\in\mathcal U_{ad}$ be any  control  with  the adjoint sate $\varphi_{\widetilde U}$ satisfy the variational inequality \eqref{VIC1}. Moreover, assume that $\mathfrak J^{\prime\prime}(\widetilde U)[U,U]>0, $ that is,
	\begin{eqnarray}\label{sdc}
		-\int_{\Omega_T}(\varphi^\prime_{\widetilde U}[U]\cdot U)dxdt< \lambda \|U\|_{\mathsf L^2(0,T;\mathbb H)}^2, \ \ \mbox{for all}  \ \ U\in\mathcal C(\widetilde U)\backslash\{0\}.	
	\end{eqnarray}
	Then there exist constants $\theta>0$ and $\delta>0$ such that for all $\widehat U\in\mathcal U_{ad},$ the following inequality holds:
	\begin{eqnarray*}\label{svi1}
		\mathfrak J(\widehat U)\geq \mathfrak J(\widetilde U)+\frac{\theta}{2}\|\widehat U-\widetilde U\|^2_{\mathsf L^2(0,T;\mathbb H)}, \ \ \ \mbox{if} \ \ \ \|\widehat U-\widetilde U\|_{\mathsf L^2(0,T;\mathbb H)}< \delta.
	\end{eqnarray*}
	In other words, the control $\widetilde U$ is a strict local minimizer of the functional $\mathfrak J(\cdot)$ on the set $\mathcal U_{ad}.$
\end{theorem}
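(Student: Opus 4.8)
The plan is to derive the quadratic growth inequality from the curvature condition \eqref{sdc} on the critical cone by combining a second-order Taylor expansion of the reduced functional with a compactness argument. First I would record that $\mathfrak J$ is twice Fr\'echet differentiable on $\mathcal U$ with
$\mathfrak J''(U)[V,W]=\lambda\int_{\Omega_T}V\cdot W\,dxdt+\int_{\Omega_T}\varphi'_U[W]\cdot V\,dxdt$,
obtained by differentiating the representation $\mathfrak J'(U)V=\int_{\Omega_T}(\varphi_U+\lambda U)\cdot V\,dxdt$ of Theorem \ref{foc2} and using the Fr\'echet differentiability of the control-to-costate map $\mathcal A$ (Proposition \ref{FDC}). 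The Lipschitz continuity of $\mathcal A'$ in Proposition \ref{FDC}(ii) supplies a constant $L>0$ with $|\mathfrak J''(U_1)[V,V]-\mathfrak J''(U_2)[V,V]|\le L\|U_1-U_2\|_{\mathsf L^2(0,T;\mathbb H)}\|V\|^2_{\mathsf L^2(0,T;\mathbb H)}$. A second ingredient I would isolate at the outset is that the curvature term $V\mapsto\int_{\Omega_T}\varphi'_{\widetilde U}[V]\cdot V\,dxdt$ is weakly sequentially continuous: since $\varphi'_{\widetilde U}[\cdot]$ maps $\mathsf L^2(0,T;\mathbb H)$ boundedly into $\mathsf H^1(0,T;\mathbb V)$, which embeds compactly into $\mathsf L^2(0,T;\mathbb H)$ by the Aubin--Lions lemma, a weakly convergent sequence $V_k\rightharpoonup V$ produces $\varphi'_{\widetilde U}[V_k]\to\varphi'_{\widetilde U}[V]$ strongly, whence $\int_{\Omega_T}\varphi'_{\widetilde U}[V_k]\cdot V_k\to\int_{\Omega_T}\varphi'_{\widetilde U}[V]\cdot V$.

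The argument I would then run is by contradiction. Assuming that no pair $(\theta,\delta)$ works, I extract $\widehat U_k\in\mathcal U_{ad}$ with $\rho_k:=\|\widehat U_k-\widetilde U\|_{\mathsf L^2(0,T;\mathbb H)}\to0$, $\rho_k>0$, and $\mathfrak J(\widehat U_k)<\mathfrak J(\widetilde U)+\tfrac1{2k}\rho_k^2$. Setting $h_k:=(\widehat U_k-\widetilde U)/\rho_k$ so that $\|h_k\|_{\mathsf L^2(0,T;\mathbb H)}=1$, I pass to a subsequence with $h_k\rightharpoonup h$ weakly in $\mathsf L^2(0,T;\mathbb H)$. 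A second-order Taylor expansion along the segment $[\widetilde U,\widehat U_k]$ gives $\mathfrak J(\widehat U_k)-\mathfrak J(\widetilde U)=\mathfrak J'(\widetilde U)(\widehat U_k-\widetilde U)+\tfrac12\mathfrak J''(\xi_k)[\widehat U_k-\widetilde U,\widehat U_k-\widetilde U]$ for an intermediate $\xi_k$; the first-order term is nonnegative by the variational inequality \eqref{VIC1}, and the Lipschitz bound above lets me replace $\mathfrak J''(\xi_k)$ by $\mathfrak J''(\widetilde U)$ at the cost of $O(\rho_k)\to0$ after division by $\rho_k^2$.

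The delicate step is to show $h\in\mathcal C(\widetilde U)$. The bound constraints $U_{\min}\le\widehat U_k\le U_{\max}$ translate into pointwise sign conditions on $h_k$ on the active sets $\{\widetilde U=U_{\min}\}$ and $\{\widetilde U=U_{\max}\}$; these define weakly closed convex cones and are therefore inherited by $h$. For the third defining relation I would use that, by the pointwise form of \eqref{VIC1}, $g\cdot h_k\ge0$ a.e. with $g:=\varphi_{\widetilde U}+\lambda\widetilde U$. Dividing the Taylor identity by $\rho_k$ and using $\mathfrak J'(\widetilde U)(\widehat U_k-\widetilde U)\le\mathfrak J(\widehat U_k)-\mathfrak J(\widetilde U)+O(\rho_k^2)$ shows $\int_{\Omega_T}g\cdot h_k\,dxdt\to0$, hence $\int_{\Omega_T}g\cdot h\,dxdt=0$ by weak convergence; combined with $g\cdot h\ge0$ a.e. (again from the sign structure) this forces $g\cdot h=0$ a.e., so $h=0$ on $\{g\ne0\}$ and thus $h\in\mathcal C(\widetilde U)$.

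Finally I close the contradiction. Dividing the Taylor identity by $\rho_k^2$, discarding the nonnegative first-order term, and invoking the contradiction hypothesis yields $\limsup_k\mathfrak J''(\widetilde U)[h_k,h_k]\le0$. On the other hand, by weak lower semicontinuity of $\lambda\|\cdot\|^2$ together with the weak sequential continuity of the compact curvature part isolated above, $\liminf_k\mathfrak J''(\widetilde U)[h_k,h_k]\ge\mathfrak J''(\widetilde U)[h,h]$, so $\mathfrak J''(\widetilde U)[h,h]\le0$. If $h\ne0$, then $h\in\mathcal C(\widetilde U)\setminus\{0\}$ and this contradicts \eqref{sdc}. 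If $h=0$, the compact part tends to $0$ and $\mathfrak J''(\widetilde U)[h_k,h_k]=\lambda\|h_k\|^2+o(1)=\lambda+o(1)$, which contradicts $\limsup\le0$ through the coercive term (using $\lambda>0$). The main obstacle is exactly this interplay: establishing the compactness/weak continuity of the curvature term generated by $\varphi'_{\widetilde U}[\cdot]$ (the two-norm structure intrinsic to second-order conditions) and rigorously placing the weak limit $h$ in the critical cone, after which the coercive $\lambda$-term eliminates the degenerate case $h=0$.
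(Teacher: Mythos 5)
Your proposal is correct and rests on exactly the same two analytic ingredients as the paper's proof --- the Lipschitz continuity of the control-to-costate derivative from Proposition \ref{FDC} and the weak-to-strong continuity of $V\mapsto\varphi^\prime_{\widetilde U}[V]$ obtained from the compact embedding of $\mathsf H^1(0,T;\mathbb V)$ into $\mathsf L^2(0,T;\mathbb H)$ --- the only difference being that you write out the full contradiction/critical-cone argument in detail, whereas the paper verifies these two convergence properties as hypotheses and then invokes Theorems 4.1 and 4.3 of \cite{Cas} (and Theorem 27 of \cite{Eb}) to conclude. Your explicit reliance on $\lambda>0$ to eliminate the degenerate case $h=0$ is the same structural requirement that is hidden inside those cited abstract results, so the two arguments are equivalent in substance.
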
	
\begin{proof}
	From the first-order Fr\'echet derivative \eqref{VIC1} of $\mathfrak J(U),$ the second derivative is given by
	$$ \mathfrak J^{\prime\prime}(\widetilde U)[U_1,U_2]=\lambda \int_{\Omega_T}U_1\cdot U_2dxdt +\int_{\Omega_T}\varphi_{\widetilde U}^\prime[U_2]\cdot U_1 dxdt, \ \ \mbox{for all} \ \ U_1,U_2\in \mathsf L^2(0,T;\mathbb{ H}).$$
	It is evident that the condition $-\int_{\Omega_T}(\varphi^\prime_{\widetilde U}[U]\cdot U)dxdt< \lambda \|U\|_{\mathsf L^2(0,T;\mathbb H)}^2 $ is equivalent to the positive definiteness of $\mathfrak J^{\prime\prime}(\cdot),$ that is,  $\mathfrak J^{\prime\prime}(\widetilde U)[U,U]>0,$  for all $U\in\mathcal C(\widetilde U)\backslash\{0\}.$	
	
	In view of Theorems 4.1, 4.3  of \cite{Cas} (see, also Theorem 27, \cite {Eb}), one can infer that the proof of this theorem can be completed if the following two convergences are attained:
	\begin{itemize}
		\item [(i)] For any sequence of admissible controls $\{\widetilde U_k\}\subset \mathcal U_{ad}$ and $\{U_k\}\subset  \mathsf L^2(0,T;\mathbb H)$  with  $\widetilde U_k \overset{s}{\rightarrow}  \widetilde U$ and $ U_k\overset{w}{\rightharpoonup}  U$ in $\mathsf L^2(0,T;\mathbb H),$  we need to show that
		$ \mathfrak J^\prime(\widetilde U_k)U_k\to \mathfrak J^\prime(\widetilde U)U\ \ \mbox{as} \ \ k\to\infty.$
		\item [(ii)] For any sequence  $\{U_k\}\subset  \mathsf L^2(0,T;\mathbb H)$  with   $ U_k\overset{w}{\rightharpoonup}  U$ in $\mathsf L^2(0,T;\mathbb H),$  it holds along a subsequence   that
		$$\int_{\Omega_T}\varphi_{\widetilde U}^\prime[U_k]\cdot U_k dxdt \to \int_{\Omega_T}\varphi_{\widetilde U}^\prime[U]\cdot Udxdt \ \ \mbox{as} \ \ k\to\infty.$$
	\end{itemize}
	To prove (i), recall the first-order Fr\'echet derivative  of $\mathfrak J(\cdot)$ is given by  \eqref{VIC1} that
	$$\mathfrak J^\prime(\widehat  U)U=\int_{\Omega_T} (\varphi_{\widehat U}+\lambda\widehat  U)\cdot Udxdt, \ \ \widehat U\in\mathcal U_{ad}, \ U\in\mathsf L^2(0,T;\mathbb H).$$
	Note that
	\begin{eqnarray}\label{poi}
		\mathfrak J^\prime(\widetilde U_k)U_k- \mathfrak J^\prime(\widetilde U)U= \mathfrak J^\prime(\widetilde U_k)U_k-\mathfrak J^\prime(\widetilde U)U_k+\mathfrak J^\prime(\widetilde U)U_k- \mathfrak J^\prime(\widetilde U)U.
	\end{eqnarray}	
	By applying H\"older's inequality, utilizing the fact that $\{U_k\}$ is uniformly bounded in $\mathsf L^2(0,T;\mathbb H)$ and  the Lipschitz continuity \eqref{LCC10}, we obtain
	\begin{eqnarray*}
		&&\lefteqn{\left|\mathfrak J^\prime(\widetilde U_k)U_k-\mathfrak J^\prime(\widetilde U)U_k\right| }\\
		&\leq& \left(\|\varphi_{\widetilde U_k}-\varphi_{\widetilde U}\|_{\mathsf L^2(0,T;\mathbb H)}+\lambda \|\widetilde  U_k-\widetilde{U}\|_{\mathsf L^2(0,T;\mathbb H)}\right)\|U_k\|_{\mathsf L^2(0,T;\mathbb H)} \nonumber\\
		&\leq& C(K_3,\lambda) \|\widetilde  U_k-\widetilde{U}\|_{\mathsf L^2(0,T;\mathbb H)}\to 0 \ \ \mbox{as} \ \  k\to\infty,
	\end{eqnarray*}
	where we invoked $\widetilde U_k \overset{s}{\rightarrow}  \widetilde U$ in $\mathsf L^2(0,T;\mathbb H).$
	Besides, since $ U_k\overset{w}{\rightharpoonup}  U$ in $\mathsf L^2(0,T;\mathbb H)$ as  $k\to\infty$ and $ (\varphi_{\widetilde U}+\lambda\widetilde  U)\in\mathsf L^2(0,T;\mathbb H),$ it is clear that $\mathfrak J^\prime(\widetilde U)U_k- \mathfrak J^\prime(\widetilde U)U\to 0$ as  $k\to\infty.$ Thus, taking $k\to\infty$ in \eqref{poi} and using the above two convergences, we arrive at the proof of (i).
	
	The proof of (ii) follows from  the steps analogues to that of (i). Let us consider the difference
	\begin{eqnarray}\label{poii}
		&&\lefteqn{	\int_{\Omega_T}\varphi_{\widetilde U}^\prime[U_k]\cdot U_k dxdt - \int_{\Omega_T}\varphi_{\widetilde U}^\prime[U]\cdot Udxdt}\\
		&&=\int_{\Omega_T}\left(\varphi_{\widetilde U}^\prime[U_k]-\varphi_{\widetilde U}^\prime[U]\right)\cdot U_kdxdt
		+\int_{\Omega_T}\varphi_{\widetilde U}^\prime[U]\cdot \left( U_k -U\right)dxdt :=I_1+I_2. \nonumber
	\end{eqnarray}
	Using Propositions \ref{FDS}, \ref{FDC} and the compact  embedding $\mathsf H^1(0,T;\mathbb V)\hookrightarrow \mathsf L^2(0,T;\mathbb H),$ by extracting a subsequence,  one can obtain that $\|\varphi_{\widetilde U}^\prime[U_k]-\varphi_{\widetilde U}^\prime[U]\|_{\mathsf L^2(0,T;\mathbb H)} \to 0 \ \mbox{as} \ \ k \to \infty.$    Consequently, since $\{U_k\}$ is bounded in $\mathsf L^2(0,T;\mathbb H),$ by applying H\"older's inequality, we get that
	$|I_1|\leq \|\varphi_{\widetilde U}^\prime[U_k]-\varphi_{\widetilde U}^\prime[U]\|_{\mathsf L^2(0,T;\mathbb H)}\|U_k\|_{\mathsf L^2(0,T;\mathbb H)} \to 0  \ \mbox{as}  \  k\to\infty.$
	Taking $ U_k\overset{w}{\rightharpoonup}  U$ in $\mathsf L^2(0,T;\mathbb H)$ as  $k\to\infty$ into account,  and again  by  Proposition \ref{FDC}, $\varphi_{\widetilde U}^\prime[U]$ is bounded in $\mathsf L^2(0,T;\mathbb H),$ we obtain that $I_2\to 0$  as  $k\to\infty.$ Thus, by taking limit $k\to\infty$ in \eqref{poii},  we complete the proof of (ii).
\end{proof}

\section{Global optimality conditions} \label{Se7} From Theorem \ref{SOC0}, we notice that a control $\widetilde U\in \mathcal U_{ad}$  which satisfies the variational inequality together with a second-order sufficient condition \eqref{sdc} defined on a  cone of critical directions is a local minimizer of the functional $\mathfrak J(\cdot).$ However, it is unclear whether such a control gives a global optimum of (MOCP) and is unique. In the following result, we obtain a way around answering these questions by employing the idea developed for a semilinear elliptic control problem in \cite{AA} and also refer to  \cite{Eb} for the diffuse interface model of tumor growth.  The main idea is to show that an admissible control satisfying the variational inequality together with a condition on the adjoint solution leads to a global optimal control of (MOCP).

\begin{theorem} \label{SCOCP} Let $\Omega$ be a periodic domain in $\mathbb R^3.$
	Let $\widetilde U\in\mathcal U_{ad}$ be any  control  with  the adjoint sate $\varphi_{\widetilde U}$ satisfy the variational inequality \eqref{VIC1}. In addition to that, assume the following conditions hold:
	\begin{eqnarray}\frac{\kappa}{2}\geq \left\{\begin{array}{lclcl}
			\label{SOCI}
			C\big(\|\varphi_{\widetilde U}\|_{\mathsf L^\infty(0,T;\mathbb V)}+2\beta C_r [\widetilde C_2]^{r-2}\|\varphi_{\widetilde U}\|_{\mathsf L^\infty(0,T;\mathbb H)}\big)  &\mbox{if} & r> 2\\[1mm]
			C\big(\|\varphi_{\widetilde U}\|_{\mathsf L^\infty(0,T;\mathbb V)}+4\beta \|\varphi_{\widetilde U}\|_{\mathsf L^\infty(0,T;\mathbb H)}\big) &\mbox{if} & r=2\\[1mm]
			C \|\varphi_{\widetilde U}\|_{\mathsf L^\infty(0,T;\mathbb V)} &\mbox{if} & r=1,
		\end{array}\right.
	\end{eqnarray}
	where the parameters $\kappa>0$ and $\beta>0,$ the constants $C,\widetilde C_2>0$ are from \eqref{2.1} and \eqref{ps1} (cf.\eqref{J14}) respectively, and $C_r>0$ depends  on $r.$
	
	Then for $r=1$ and any $r\geq 2,$ $\widetilde U\in\mathcal U_{ad}$  is a global optimal control of (MOCP). Furthermore, if the conditions  \eqref{SOCI} are replaced with strict inequality ($<\frac{\kappa}{2}$), then the global optimal control $\widetilde U$ is unique.
\end{theorem}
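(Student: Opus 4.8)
The plan is to establish global optimality by directly estimating the increment $\mathfrak J(U)-\mathfrak J(\widetilde U)$ for an arbitrary admissible $U\in\mathcal U_{ad}$ and showing it is nonnegative, with the first-order variational inequality \eqref{VIC1} and the structural condition \eqref{SOCI} supplying exactly the signs needed. Write $u:=u_U$, $\widetilde u:=u_{\widetilde U}$ and set $y:=u-\widetilde u$, the \emph{nonlinear} state difference (so $y(0)=0$). Expanding the two quadratic terms of $\mathfrak J$ through the identity $\|a\|^2-\|b\|^2=2(b,a-b)+\|a-b\|^2$ gives
\begin{align*}
\mathfrak J(U)-\mathfrak J(\widetilde U) &= \frac{\kappa}{2}\int_0^T\|\nabla y\|_{\mathbb H}^2\,dt + \frac{\lambda}{2}\int_0^T\|U-\widetilde U\|_{\mathbb H}^2\,dt \\
&\quad + \kappa\int_0^T(\nabla(\widetilde u - u_d),\nabla y)\,dt + \lambda\int_0^T(\widetilde U, U-\widetilde U)\,dt.
\end{align*}
The task is to convert the cross term $\kappa\int_0^T(\nabla(\widetilde u-u_d),\nabla y)\,dt$ into adjoint/control data plus a controllable remainder.

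First I would test the adjoint weak formulation (Definition \ref{was}) with $v=y(t)$ and integrate in time, using $y(0)=0$ and the terminal relation $(\varphi(T),v)+\mu(\nabla\varphi(T),\nabla v)=0$ to annihilate both temporal boundary contributions; simultaneously I test the weak form of the state-difference equation (the difference of two copies of \eqref{sf}) with $\varphi:=\varphi_{\widetilde U}$. Subtracting, the purely linear contributions $(\varphi,y_t)+\mu(\nabla\varphi,\nabla y_t)+\nu(\nabla\varphi,\nabla y)+\alpha(\varphi,y)$ cancel. Because $y$ is the \emph{nonlinear} difference rather than the linearised solution, the convective and damping terms do not cancel but leave a quadratic defect: using $u=\widetilde u+y$ and the antisymmetry \eqref{2.2} the trilinear terms collapse to $-\mathrm b(y,y,\varphi_{\widetilde U})$, while symmetry of the Jacobian $f'(\widetilde u)$ (see \eqref{ld}) turns the damping contribution into $-\beta(R(y),\varphi_{\widetilde U})$ with the Taylor remainder $R(y):=f(u)-f(\widetilde u)-f'(\widetilde u)y=\int_0^1(1-\theta)f''(\widetilde u+\theta y)[y,y]\,d\theta$. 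This produces
\begin{align*}
\kappa\int_0^T(\nabla(\widetilde u - u_d),\nabla y)\,dt &= \int_{\Omega_T}\varphi_{\widetilde U}\cdot(U-\widetilde U)\,dxdt \\
&\quad - \int_0^T\mathrm b(y,y,\varphi_{\widetilde U})\,dt - \beta\int_0^T(R(y),\varphi_{\widetilde U})\,dt.
\end{align*}
Inserting this and recognising $\int_{\Omega_T}(\varphi_{\widetilde U}+\lambda\widetilde U)\cdot(U-\widetilde U)\,dxdt=\mathfrak J'(\widetilde U)(U-\widetilde U)\ge 0$ via \eqref{VIC1}, I arrive at
\begin{align*}
\mathfrak J(U)-\mathfrak J(\widetilde U) &= \mathfrak J'(\widetilde U)(U-\widetilde U) + \frac{\lambda}{2}\int_0^T\|U-\widetilde U\|_{\mathbb H}^2\,dt \\
&\quad + \frac{\kappa}{2}\int_0^T\|\nabla y\|_{\mathbb H}^2\,dt - \int_0^T\mathrm b(y,y,\varphi_{\widetilde U})\,dt - \beta\int_0^T(R(y),\varphi_{\widetilde U})\,dt.
\end{align*}

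The first two terms on the right being nonnegative, it remains to show the last three are jointly nonnegative, which is exactly where \eqref{SOCI} enters. I would bound the convective defect by \eqref{2.1} as $|\mathrm b(y,y,\varphi_{\widetilde U})|\le C\|y\|_{\mathbb V}^2\|\varphi_{\widetilde U}\|_{\mathbb V}$, and, for $r>2$, the damping defect by the pointwise bound $|f''(p)[y,y]|\le C_r|p|^{r-2}|y|^2$ from \eqref{U1x} combined with $|\widetilde u+\theta y|\le\max\{|\widetilde u|,|u|\}$ and the uniform state bound $\|u\|_{\mathsf L^\infty(0,T;\mathbb L^\infty)}\le\widetilde C_2$ coming from \eqref{ps1} and $\mathbb H^2\hookrightarrow\mathbb L^\infty$; a H\"older splitting $\int|y|^2|\varphi_{\widetilde U}|\le\|y\|_{\mathbb L^4}^2\|\varphi_{\widetilde U}\|_{\mathbb H}$ then yields $\beta|(R(y),\varphi_{\widetilde U})|\le C\beta C_r[\widetilde C_2]^{r-2}\|y\|_{\mathbb V}^2\|\varphi_{\widetilde U}\|_{\mathbb H}$. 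Integrating in time and factoring $\int_0^T\|\nabla y\|_{\mathbb H}^2\,dt$, the surviving coefficient is $\frac{\kappa}{2}-C\|\varphi_{\widetilde U}\|_{\mathsf L^\infty(0,T;\mathbb V)}-C\beta C_r[\widetilde C_2]^{r-2}\|\varphi_{\widetilde U}\|_{\mathsf L^\infty(0,T;\mathbb H)}$, nonnegative precisely under \eqref{SOCI}; hence $\mathfrak J(U)\ge\mathfrak J(\widetilde U)$ and $\widetilde U$ is globally optimal. The case $r=1$ is immediate since $f$ is linear and $R\equiv 0$, leaving only the convective defect (so only $\|\varphi_{\widetilde U}\|_{\mathsf L^\infty(0,T;\mathbb V)}$ appears); the case $r=2$ is handled separately because $f''$ degenerates, using instead a direct estimate of the type in \eqref{U21} giving $|R(y)|\le C|y|^2$ with the constant producing the factor $4$ in \eqref{SOCI}.

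For uniqueness under the strict form of \eqref{SOCI}, the factored coefficient becomes some $\eta>0$, so the estimate upgrades to $\mathfrak J(U)-\mathfrak J(\widetilde U)\ge\eta\int_0^T\|\nabla y\|_{\mathbb H}^2\,dt+\frac{\lambda}{2}\int_0^T\|U-\widetilde U\|_{\mathbb H}^2\,dt$. If $\widehat U$ is any other global minimiser, the left side vanishes, forcing $\nabla(u_{\widehat U}-\widetilde u)\equiv 0$, whence $u_{\widehat U}=\widetilde u$ by the Poincar\'e inequality, and then the state equation (or, when $\lambda>0$, already the $\lambda$-term) forces $\widehat U=\widetilde U$. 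The \textbf{main obstacle} is the second step: executing the two integrations by parts so that every linear term cancels and correctly isolating the quadratic defect $-\mathrm b(y,y,\varphi_{\widetilde U})-\beta(R(y),\varphi_{\widetilde U})$ for the nonlinear (not linearised) difference $y$; the ensuing damping estimate must then be organised case-by-case in $r$ ($r>2$, $r=2$, $r=1$), with the uniform $\mathbb L^\infty$ state bound $\widetilde C_2$ being indispensable for controlling $|f''|\le C_r|\cdot|^{r-2}$.
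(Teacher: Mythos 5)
Your proposal is correct and follows essentially the same route as the paper's proof: the same expansion of $\mathfrak J(U)-\mathfrak J(\widetilde U)$, the same duality pairing of the adjoint equation with the nonlinear state difference $\widehat u=u-\widetilde u$ to reduce the cross term to the quadratic defects $-\mathrm b(\widehat u,\widehat u,\varphi_{\widetilde U})-\beta(R(\widehat u),\varphi_{\widetilde U})$, and the same case-by-case estimates ($r>2$ via $f''$ and the $\mathbb L^\infty$ state bound, $r=2$ via the first-order Taylor bound of type \eqref{U21}, $r=1$ trivially). Your uniqueness argument is a slightly more careful variant of the paper's (explicitly handling the possibility $u_{\widehat U}=\widetilde u$ with $\widehat U\neq\widetilde U$ through the state equation), but the approach is the same.
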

\begin{proof}  Let $U\in \mathcal  U_{ad}$ be an arbitrary control. Let $u:=u_{U}$ and $\widetilde u:=u_{\widetilde{U}}$ be the strong solutions of \eqref{1.1} corresponding to  $U$ and $\widetilde U$ respectively. It is easy to check that the following inequality holds:
	\begin{eqnarray} \label{ssoc3}
		\mathfrak J(U)-\mathfrak J(\widetilde U)
		&=& \frac{\kappa}{2}\int_0^T\|\nabla u(t)-\nabla\widetilde u(t)\|_{\mathbb H}^2 dt+\frac{\lambda}{2}\int_0^T\|U(t)-\widetilde U(t)\|_{\mathbb H}^2 dt\nonumber\\
		&&+\kappa\int_{\Omega_T}\nabla (\widetilde u-u_d)\cdot \nabla(u-\widetilde u)dxdt+\lambda\int_{\Omega_T} \widetilde  U\cdot (U-\widetilde U)dxdt.\nonumber\\
		&\geq& \frac{\kappa}{2}\int_0^T\|\nabla u(t)-\nabla\widetilde u(t)\|_{\mathbb H}^2 dt+\frac{\lambda}{2}\int_0^T\|U(t)-\widetilde U(t)\|_{\mathbb H}^2 dt+R, \ \ \  \
	\end{eqnarray}
	where
	\begin{eqnarray}\label{J0}
		R:=\kappa\int_{\Omega_T}\nabla (\widetilde u-u_d)\cdot \nabla(u-\widetilde u)dxdt-\int_{\Omega_T} \widetilde\varphi\cdot (U-\widetilde  U)dxdt,
	\end{eqnarray}
	and  we used the variational inequality \eqref{VIC1}:
	$$\lambda\int_{\Omega_T} \widetilde  U\cdot (U-\widetilde  U)dxdt\geq -\int_{\Omega_T} \widetilde\varphi\cdot (U-\widetilde  U)dxdt,$$  for any   $U\in\mathcal U_{ad},$ $\widetilde \varphi:=\varphi_{\widetilde U}$  is the weak solution of the adjoint system \eqref{as}.
	
	Our main idea here is to show that $\mathfrak J(U)\geq \mathfrak J(\widetilde U)$ for all $U\in\mathcal U_{ad}\backslash\{\widetilde U\}.$ 	To attain this end, let us evaluate the lower bound of the integral $R.$  For any  $U\in\mathcal U_{ad}\backslash\{\widetilde U\},$  let $\widehat u:= u-\widetilde u, \widehat U:=U-\widetilde U$ and  $\widehat p:=p_{U}-p_{\widetilde U}.$
	For any $w\in \mathbb V,$ we note that
	\begin{eqnarray*}\label{snoc1}
		((u\cdot \nabla)u,w)-((\widetilde  u\cdot \nabla)\widetilde  u,w)
		\!\!&=&\!\!\mathrm b(u-\widetilde  u,u-\widetilde  u,w) + \mathrm b(\widetilde  u,u-\widetilde  u,w)+\mathrm b(u-\widetilde  u,\widetilde  u, w) \nonumber\\
		&=&((\widehat u\cdot \nabla) \widehat u,w)+ ((\widetilde  u\cdot \nabla)\widehat u,w)+((\widehat u\cdot \nabla)\widetilde  u,w).
	\end{eqnarray*}
	Therefore, the triplet $(\widehat u,\widehat p,\widehat U) $ satisfies the system
	\begin{eqnarray}\label{ssoc1}\left\{\begin{array}{rlclcrr}
			\widehat u_t-\mu \Delta \widehat u_t -\nu \Delta \widehat u + (\widetilde u\cdot \nabla)\widehat u+(\widehat u\cdot \nabla)\widetilde u +\nabla \widehat p+\alpha \widehat u
			\\ [1mm] +(\widehat u\cdot\nabla)\widehat u+\beta f(u)-\beta f(\widetilde u) &=& \widehat U  \ &\mbox{in} \ &\Omega_T\\[1mm]
			\nabla\cdot \widehat u=0  \ \ \mbox{in} \ \ \Omega_T, \ \ \ \
			\widehat u(x,0)&=&0 \ &\mbox{in}&\Omega.
		\end{array}\right.
	\end{eqnarray}
	Taking inner product of \eqref{ssoc1} with $\widetilde\varphi,$  and integrating by parts, we obtain
	\begin{eqnarray} \label{ssoc11}
		\lefteqn{\int_{\Omega_T}\big(-\widetilde\varphi_t \cdot \widehat u-\mu\nabla\widetilde\varphi_t\cdot \nabla \widehat u+\nu\nabla\widetilde\varphi\cdot \nabla \widehat u\big)dxdt}  \nonumber\\
		&&+\int_{\Omega_T}\Big((\nabla \widetilde  u)^T\widetilde\varphi-(\widetilde u\cdot \nabla)\widetilde\varphi+\alpha \widetilde\varphi+\beta f^\prime(\widetilde  u)\widetilde\varphi\Big)\cdot \widehat u  dxdt \\
		&&+ \int_{\Omega_T}(\widehat u\cdot\nabla)\widehat u\cdot \widetilde \varphi dxdt+\beta \int_{\Omega_T} \big( f(u)- f(\widetilde u)- f^\prime(\widetilde u)\widehat u\big)\cdot\widetilde \varphi dxdt=\int_{\Omega_T}\widehat U\cdot \widetilde \varphi dxdt.  \nonumber 
	\end{eqnarray}
	By testing \eqref{as} with $\widehat u$ and comparing it with the left-hand side integrals of \eqref{ssoc11},  the integrals in $R$ can be expressed as follows
	\begin{eqnarray} \label{Re1}
		R&=&- \int_{\Omega_T}(\widehat u\cdot\nabla)\widehat u\cdot \widetilde \varphi dxdt
		-\beta \int_{\Omega_T} \big( f(u)- f(\widetilde u)- f^\prime(\widetilde u)\widehat u\big)\cdot\widetilde \varphi dxdt\\
		&=& -\int_{\Omega_T}\Big((\widehat u\cdot\nabla)\widehat u+\beta\int_0^1 (1-\theta)f^{\prime\prime}(\widetilde u+ \theta \widehat u)[\widehat u,\widehat u]d\theta\Big)\cdot\widetilde \varphi dxdt, \nonumber
	\end{eqnarray}
	where we also invoked the second-order Taylor's formula \eqref{SOTF} for $r>2.$
	Let us obtain a lower bound of $R.$ By invoking \eqref{2.1}, and the embedding  $\mathbb V\hookrightarrow \mathbb L^4,$  we get
	\begin{eqnarray}\label{J12}
		\left|\int_{\Omega}(\widehat u\cdot\nabla)\widehat u \cdot \widetilde \varphi dx\right| =|\mathrm b(\widehat u,\widehat u,\widetilde \varphi)| \leq \|\widehat u(t)\|_{\mathbb L^4}^2\|\nabla\widetilde \varphi(t)\|_{\mathbb L^2}\leq  C \|\widehat u(t)\|_{\mathbb V}^2\|\widetilde \varphi(t)\|_{\mathbb V}.
	\end{eqnarray}
	For $r>2,$ using the second derivative formula \eqref{U1x} and H\"older's inequality,   we get
	\begin{eqnarray}\label{J13}
		&&\lefteqn{	\beta\left|\int_\Omega\int_0^1 (1-\theta) f^{\prime\prime}(\widetilde u+ \theta \widehat u)[\widehat u,\widehat u]\cdot\widetilde \varphi  d\theta dx\right|\nonumber}\\
		&\leq& \beta C_r\sup_{\theta\in (0,1)}	\int_\Omega\int_0^1|\theta u+ (1-\theta)\widetilde  u|^{r-2}|\widehat u|^2|\widetilde \varphi|d\theta dx\nonumber\\
		&\leq& \beta C_r\left(\| u(t)\|_{\mathbb L^\infty}^{r-2}+ \|\widetilde u(t)\|_{\mathbb L^\infty}^{r-2}\right)\|\widehat u(t)\|_{\mathbb L^4}^2\|\widetilde \varphi(t)\|_{\mathbb L^2} \nonumber\\
		&\leq& C\beta C_r\big(\| u\|^{r-2}_{\mathsf L^\infty(0,T;\mathbb L^\infty)}+ \|\widetilde u\|^{r-2}_{\mathsf L^\infty(0,T;\mathbb L^\infty)}\big)\|\widehat u(t)\|_{\mathbb V}^2\|\widetilde \varphi(t)\|_{\mathbb H},
	\end{eqnarray}
	where $C_r>0$ depends only on $r$ and the constant $C>0$ in \eqref{J12} and \eqref{J13} arises from the inequality $\|\widehat u(t)\|_{\mathbb L^4} \leq \sqrt C \|\widehat u(t)\|_{\mathbb V}.$ Coupling \eqref{J12},\eqref{J13}, using \eqref{ps1} and invoking the condition \eqref{SOCI}, we get
	\begin{eqnarray}\label{J14}
		|R|&\leq& C\big(\|\widetilde \varphi\|_{\mathsf L^\infty(0,T;\mathbb V)}+2\beta C_r [\widetilde C_2]^{r-2}\|\widetilde \varphi\|_{\mathsf L^\infty(0,T;\mathbb H)}\big)\|\widehat u\|_{\mathsf L^2(0,T;\mathbb V)}^2 \nonumber \\
		&\leq& \frac{\kappa}{2}\|\widehat u\|_{\mathsf L^2(0,T;\mathbb V)}^2,
	\end{eqnarray}
	for all $r>2,$ where $\widetilde C_2:=C_*\sqrt {C_2}$ and $C_*$ is due to $\|u(t)\|_{\mathbb L^\infty} \leq C_* \| u(t)\|_{\mathbb H^2}.$  In the case of $r=2,$ we infer from \eqref{U21} that the damping integral in \eqref{Re1} can be estimated using the first-order Taylor's formula as follows
	\begin{eqnarray}\label{J15}
		\beta\left|\int_\Omega\int_0^1\big(f^{\prime}(\widetilde u+\theta \widehat u)\widehat u- f^\prime(\widetilde u)\widehat u\big)d\theta \cdot\widetilde \varphi dx \right|
		\leq 4C\beta \|\widehat u(t)\|_{\mathbb V}^2\|\widetilde \varphi(t)\|_{\mathbb H}.
	\end{eqnarray}
	
	By the inequalities \eqref{J12} and \eqref{J15} together with \eqref{SOCI}, the estimate \eqref{J14}  holds for $r=2.$
	Consequently, for $r=1$ and any $r\geq 2,$ $R\geq -\frac{\kappa}{2}\|\widehat u\|_{\mathsf L^2(0,T;\mathbb V)}^2$ for all $U\in\mathcal U_{ad}\backslash\{\widetilde U\},$ and hence from \eqref{ssoc3}, we arrive at the optimality inequality
	$\mathfrak J(U)\geq \mathfrak J(\widetilde U)$
	for any  $ U\in \mathcal U_{ad}\backslash\{\widetilde U\}.$ Thus, an admissible control $\widetilde U\in\mathcal U_{ad}$  satisfying the variational inequality \eqref{VIC1} is a global optimal of (MOCP).  Further, if the condition \eqref{SOCI} is replaced by a strict inequality, then \eqref{J14} holds with $|R|< \frac{\kappa}{2}\|\widehat u\|_{\mathsf L^2(0,T;\mathbb V)}^2.$ In this case, it is evident that $\mathfrak J(U)>\mathfrak J(\widetilde U)$ for any  $U\in\mathcal U_{ad}\backslash\{\widetilde U\}.$ Hence, the global optimal control $\widetilde U\in\mathcal U_{ad}$ of (MOCP) is unique.  This completes the proof.
\end{proof}	
\begin{remark} \label{SOCBD}
	In section \ref{FOBD}, we discussed the first-order optimality conditions (Theorem \ref{foc2}) in the bounded domain $\Omega$  by restricting the growth of the damping term $\beta |u|^{r-1}u$ to $2\leq r\leq 5.$ By a careful study of the proof of Theorem \ref{SCOCP}, it is evident that to prove this theorem for the bounded domain, we only need to prove the inequality \eqref{J13} with the help of the weak solutions of \eqref{1.1}.
	
	For any $ 2< r\leq 5,$ we infer from \eqref{U22} and \eqref{J13}  that
	\begin{eqnarray}\label{J16}
		&&\lefteqn{\beta\left|\left\langle\int_0^1(1-\theta) f^{\prime\prime}(\widetilde u+ \theta \widehat u)[\widehat u,\widehat u] d\theta, \widetilde \varphi \right\rangle\right|}\nonumber\\
		&\leq&C_5\beta C_r\big(\|\widetilde u\|^{r-2}_{\mathsf L^\infty(0,T;\mathbb L^{r+1})} +\|u\|^{r-2}_{\mathsf L^\infty(0,T;\mathbb L^{r+1})}\big)\|\widehat u(t)\|^{2}_{\mathbb V}  \|\widetilde \varphi(t)\|_{\mathbb V},
	\end{eqnarray}	
	where $ C_5=C_6 C_7>0$ stands for the constant from $\|\widehat u(t)\|_{\mathbb L^{\frac{12(r+1)}{17-r}}} \leq \sqrt {C_6} 	\|\widehat u(t)\|_{\mathbb V}$ and $\|\widetilde \varphi(t)\|_{\mathbb L^6} \leq C_7 \|\widetilde \varphi(t)\|_{\mathbb V}.$
	From \eqref{J12} and \eqref{J16}, we obtain that
	$|R|\leq \frac{\kappa}{2}\|\widehat u\|_{\mathsf L^2(0,T;\mathbb V)}^2$ for all  $2< r\leq 5,$
	provided $$\frac{\kappa}{2} \geq \Big(C+2\beta C_5 C_r[\widehat C_1]^{r-2}\Big)\|\widetilde \varphi\|_{\mathsf L^\infty(0,T;\mathbb V)},$$
	where $\widehat C_1:=\Big[\frac{\widetilde C_1(r+1)}{2\beta}\Big]^{\frac{1}{(r+1)}},$ and $\widetilde C_1$ is the constant from \eqref{ps2e}. When $r=2,$  the estimates \eqref{J12},\eqref{J15} and condition \eqref{SOCI} show that the above bound for $|R|$ holds true. Since the weak solution of \eqref{1.1} is obtained for the bounded domain, the global optimality conditions (Theorem \ref{SCOCP}) are valid for the bounded domain as well for all $2\leq r\leq 5$ and $r=1.$
\end{remark}
\section*{Acknowledgments} The author would like to thank  Dr.Manil T. Mohan, Indian Institute of Technology, Roorkee, for fruitful discussions during the preparation of the final version of the manuscript.


\begin{thebibliography}{99}
	
	\bibitem{Ab} 
	\newblock F. Abergel and R. Temam,
	\newblock {On some control problems in fluid mechanics},
	\newblock \emph{Theor. Comput. Fluid Dyn.,} {\bf 1} (1990), 303--325.
	
	\bibitem{Ag} 
	\newblock S. Agmon,
	\newblock \emph{Lectures on Elliptic Boundary Value Problems,}
	\newblock AMS, Rhode Island, 2010.  	
	
	\bibitem{AA} 
	\newblock A. Ahmad Ali, K. Deckelnick and M. Hinze.
	\newblock {Global minima for semilinear optimal control problems},
	\newblock \emph{Comput. Optim. Appl.}, {\bf 65} (2016), 261--288.
	
	\bibitem{An2} 
	\newblock C. T. Anh and T. N. Nguyet,
	\newblock {Optimal control of the instationary three dimensional Navier-Stokes-Voigt equations},
	\newblock \emph{Numer. Funct. Anal. Optim.,} {\bf 37} (2016), 415--439.
	
	\bibitem{An3} 
	\newblock C. T. Anh and T. N. Nguyet,
	\newblock {Time optimal control of the unsteady 3D Navier-Stokes-Voigt equations},
	\newblock \emph{Appl. Math. Optim.,} {\bf 79} (2019), 397--426.
	
	\bibitem{An} 
	\newblock C. T. Anh and P. T. Trang,
	\newblock {On the 3D Kelvin-Voigt-Brinkman-Forchheimer equations in some unbounded domains},
	\newblock \emph{Nonlinear Anal.,} {\bf 89} (2013), 36--54.
	
	\bibitem{Ant}
	\newblock S. N. Antontseva and H. B. de Oliveira,
	\newblock {The Navier-Stokes problem modified by an absorption term},
	\newblock \emph{Appl. Anal.,} {\bf 89} (2010), 1805--1825.
	
	\bibitem{Ba} 
	\newblock V. Barbu,
	\newblock {The time optimal control of Navier-Stokes equations},
	\newblock \emph{Systems Cont. Lett.,} {\bf 30} (1997), 93--100.
	
	\bibitem{Cai}
	\newblock X. Cai and Q. Jiu,
	\newblock {Weak and strong solutions for the incompressible Navier-Stokes equations with damping},
	\newblock \emph{J. Math. Anal. Appl.,} {\bf 343} (2008), 799--809.
	
	\bibitem{Ca} 
	\newblock Y. Cao, E. M. Lunasin and E. S. Titi,
	\newblock {Global well-posedness of the three-dimensional viscous and inviscid simplified Bardina turbulence models},
	\newblock \emph{Comm. Math. Sci.,} { \bf 4} (2006), 823--848.
	
	\bibitem{Cas} 
	\newblock E. Casas, J. C. de los Reyes and F. Tr\"oltzsch,
	\newblock {Sufficient second-order optimality conditions for semilinear control problems with pointwise state constraints},
	\newblock \emph{SIAM J. Optim.}, {\bf 19} (2008), 616--643.
	
	\bibitem{Do} 
	\newblock C. R. Doering and J. D. Gibbon,
	\newblock \emph{Applied Analysis of the Navier-Stokes Equations,}
	\newblock Cambridge University Press, 1995.
	
	\bibitem{Eb} 
	\newblock M. Ebenbeck and P. Knopf,
	\newblock {Optimal control theory and advanced optimality conditions for a diffuse interface model of tumor growth},
	\newblock \emph{ESAIM Control Optim. Calc. Var.,} {\bf 26}  (2020), Paper No. 71, 38 pp.
	
	\bibitem{Ev} 
	\newblock L. C. Evans,
	\newblock \emph{Partial Differential Equations}
	\newblock AMS, Rhode Island, 1998.
	
	\bibitem{Fa} 
	\newblock H. O. Fattorini and S. S. Sritharan,
	\newblock {Optimal control problems with state constraints in fluid mechanics and combustion},
	\newblock \emph{Appl. Math. Optim.,} {\bf 38} (1998), 159--192.
	
	\bibitem{Fl} 
	\newblock F. Flandoli,
	\newblock {Introduction to 3D stochastic fluid dynamics},
	\newblock {in} \emph{SPDE in Hydrodynamic: Recent Progress and Prospects, Lecture Notes in Mathematics}, Springer, \textbf{1942} (2008), 51--150.
	
	\bibitem{Foi} 
	\newblock C. Foias, O. Manley, R. Rosa and R. Temam,
	\newblock \emph{Navier-Stokes Equations and Turbulence},
	\newblock Cambridge University Press, 2001.
	
	\bibitem{Fu} 
	\newblock A. V. Fursikov,
	\newblock \emph{Optimal Control of Distributed Systems: Theory and applications,}
	\newblock AMS, Rhode Island, 2000.
	
	\bibitem{Fu1} 
	\newblock A. V. Fursikov, M. D. Gunzburger and L. S. Hou,
	\newblock {Optimal boundary control for the evolutionary Navier-Stokes system: The three-dimensional case},
	\newblock \emph{SIAM J. Control Optim.,} {\bf 43} (2005), 2191--2232.
	
	\bibitem{Ha} 
	\newblock K. W. Hajduk and J. C. Robinson,
	\newblock {Energy equality for the 3D critical convective Brinkman-Forchheimer equations},
	\newblock \emph{J. Differential Equations,} {\bf 263} (2017), 7141--7161.
	
	\bibitem{Ha1} 
	\newblock K. W. Hajduk, J. C. Robinson and W. Sadowski,
	\newblock {Robustness of regularity for the 3D convective Brinkman-Forchheimer equations},
	\newblock \emph{J. Math. Anal. Appl.,} {\bf 500} (2021), 125058, 23 pp.
	
	\bibitem{Ka} 
	\newblock V. Kalantarov and S. Zelik,
	\newblock {Smooth attractors for the Brinkman-Forchheimer equations with fast growing nonlinearities},
	\newblock \emph{Commun. Pure Appl. Anal.,} {\bf 11} (2012), 2037--2054.
	
	\bibitem{Ki} 
	\newblock B. T. Kien, A. R\"osch and D. Wachsmuth,
	\newblock {Pontryagin's principle for optimal control problem governed by 3D Navier-Stokes equations},
	\newblock \emph{J. Optim. Theory Appl.,} {\bf 173} (2017), 30--55.
	
	\bibitem{Ko} 
	\newblock H. Kozono and T. Yanagisawa,
	\newblock {$L^r$-variational inequality for vector fields and the Helmholtz-Weyl decomposition in bounded domains},
	\newblock \emph{Indiana Univ. Math. J.,} {\bf 58} (2009), 1853--1920.
	
	\bibitem{La} 
	\newblock O. A. Ladyzhenskaya,
	\newblock \emph{The Mathematical Theory of Viscous Incompressible Flow,}
	\newblock Gordon and Breach, Science Publishers, New York, 1969.
	
	\bibitem{Li} 
	\newblock J.-L. Lions,
	\newblock \emph{Optimal Control of Systems Governed by Partial Differential Equations,}
	\newblock Springer, 1971.
	
	\bibitem{Liu} 
	\newblock H. Liu,
	\newblock {Optimal control problems with state constraint governed by Navier-Stokes equations},
	\newblock \emph{Nonlinear Anal.,} {\bf 73} (2010), 3924--3939.
	
	\bibitem{Mo} 
	\newblock M. T. Mohan,
	\newblock {Global and exponential attractors for the 3D Kelvin-Voigt-Brinkman-Forchheimer equations},
	\newblock \emph{Discrete Contin. Dyn. Syst. Ser. B}, {\bf 25} (2020), 3393--3436.
	
	\bibitem{Mo1} 
	\newblock M. T. Mohan,
	\newblock {Optimal control problems governed by two dimensional convective Brinkman-Forchheimer equations},
	\newblock \emph{Evol. Equ. Control Theory}, {\bf 11} (2022), 649--679.
	
	\bibitem{Ni} 
	\newblock L. Nirenberg,
	\newblock On elliptic partial differential equations,
	\newblock \emph{Ann. Sc. Norm. Super. Pisa Cl. Sci.}, {\bf 13} (1959), 115--162.
	
	\bibitem{Os} 
	\newblock A. P. Oskolkov,
	\newblock The uniqueness and solvability in the large of boundary value problems for the equations of motion of aqueous solutions of polymers,
	\newblock \emph{Zap. Naucn. Sem. Leningrad. Otdel. Mat. Inst. Steklov,} {\bf 38} (1973), 98--136.
	
	\bibitem{Sa} 
	\newblock K. Sakthivel and S. S. Sritharan,
	\newblock {Martingale solutions for stochastic Navier-Stokes equations driven by L\'evy noise},
	\newblock \emph{Evol. Equ. Control Theory}, \textbf{1} (2012), 355--392.
	
	\bibitem{Si} 
	\newblock J. Simon,
	\newblock {Compact sets in the space $L^p(0, T ;B)$},
	\newblock \emph{Ann. Mat. Pura Appl.,} {\bf 146} (1987), 65--96.
	
	\bibitem{Sr1} 
	\newblock S. S. Sritharan,
	\newblock {Dynamic programming of Navier-Stokes equations},
	\newblock \emph{Systems Cont. Lett.,} {\bf 16} (1991), 299--307.
	
	\bibitem{Sr11} 
	\newblock S. S. Sritharan,
	\newblock {An optimal control problem in exterior hydrodynamics},
	\newblock \emph{Proc. Roy. Soc. Edinburgh Sect. A}, {\bf 121} (1992), 5--32.
	
	\bibitem{Sr2} 
	\newblock S. S. Sritharan,
	\newblock \emph{Optimal Control of Viscous Flow,}
	\newblock SIAM, Philadelphia, 1998.
	
	\bibitem{Sr3} 
	\newblock S. S. Sritharan,
	\newblock {Deterministic and stochastic control of Navier-Stokes equations with linear, monotone and hyper viscosities},
	\newblock \emph{Appl. Math. Optim.,} {\bf 41} (2000), 255--308.
	
	\bibitem{Te} 
	\newblock R. Temam,
	\newblock \emph{Navier-Stokes Equations: Theory and Numerical Analysis},
	\newblock AMS, 1984.
	
	\bibitem{Te0} 
	\newblock R. Temam,
	\newblock \emph{Navier-Stokes Equations and Nonlinear Functional Analysis,}
	\newblock Second Edition, CBMS-NSF Regional Conference Series in Applied Mathematics, 1995.
	
	\bibitem{Te1} 
	\newblock R. Temam,
	\newblock Some Developments on Navier-Stokes Equations in the second half of the 20th Century,
	\newblock In: \emph{Development of Mathematics 1950-2000}, J.-P. Pier, ed., Birkh\"auser, Basel, 2000, 1049--1106.
	
	\bibitem{Tr} 
	\newblock F. Tr\"oltzsch,
	\newblock \emph{Optimal Control of Partial Differential Equations,}
	\newblock AMS, Rhode Island, 2010.
	
	\bibitem{Tr1} 
	\newblock F. Tr\"oltzsch and D. Wachsmuth,
	\newblock {Second-order sufficient optimality conditions for the optimal control of Navier-Stokes equations},
	\newblock \emph{ESAIM Control Optim. Calc. Var.,} {\bf 12} (2006), 93--119.
	
	\bibitem{Wa} 
	\newblock G. Wang,
	\newblock {Optimal controls of 3-dimensional Navier-Stokes equations with state constraints},
	\newblock \emph{SIAM J. Control Optim.,} {\bf 41} (2002), 583--606.
	
	\bibitem{Lij}
	\newblock L. Wang and P. He,
	\newblock {Second order optimality conditions for optimal control problems governed by 3-dimensional Navier-Stokes equations},
	\newblock \emph{Acta Math. Sci. Ser. A,} {\bf 26} (2006), 729--734.
	
	\bibitem{We} 
	\newblock T. Weier, G. Gerbeth, G. Mutschke, O. Lielausis and G. Lammers,
	\newblock {Control of flow separation using electromagnetic forces},
	\newblock \emph{Flow Turb. Comb.,} {\bf 71} (2003), 5--17.
	
	
\end{thebibliography}
\end{document}